\DeclareMathOperator{\cC}{\ensuremath{\mathcal{C}}}
\DeclareMathOperator{\cO}{\ensuremath{\mathcal{O}}}
\DeclareMathOperator{\cS}{\ensuremath{\mathcal{S}}}
\DeclareMathOperator{\cL}{\ensuremath{\mathcal{L}}}
\DeclareMathOperator{\cA}{\ensuremath{\mathcal{A}}}
\DeclareMathOperator{\st}{\ensuremath{\mathrm{s.t.}}}
\DeclareMathOperator{\bR}{\ensuremath{\mathbb{R}}}
\DeclareMathOperator{\bK}{\ensuremath{\mathbb{K}}}
\DeclareMathOperator{\low}{\ensuremath{\mathrm{low}}}
\DeclareMathOperator{\sgn}{\ensuremath{\mathrm{sgn}}}
\DeclareMathOperator{\init}{\ensuremath{\mathrm{init}}}
\DeclareMathOperator{\hi}{\ensuremath{\mathrm{hi}}}
\newtheorem{lemma}{Lemma}[section]
\newtheorem{assumption}{Assumption}[section]
\newtheorem{definition}{Definition}[section]
\newtheorem{theorem}{Theorem}[section]
\newtheorem{remark}{Remark}[section]
\newcommand{\beq}{\begin{equation}}
	\newcommand{\eeq}{\end{equation}}
\newcommand{\beqa}{\begin{eqnarray}}
	\newcommand{\eeqa}{\end{eqnarray}}
\newcommand{\beqas}{\begin{eqnarray*}}
	\newcommand{\eeqas}{\end{eqnarray*}}
\newcommand{\ba}{\begin{array}}
	\newcommand{\ea}{\end{array}}
\newcommand{\bi}{\begin{statementize}}
	\newcommand{\ei}{\end{statementize}}
\def\bdelta{{\delta}}
\def\bT{{\mathbb{T}}}
\def\scL{{\mathscr L}}
\def\fh{{f_{\rm hi}}}
\def\fl{{f_{\rm low}}}
\def\Fh{{F_{\hi}}}
\def\Fl{{F_{\rm low}}}
\def\nn{{\nonumber}}
\def\tdc{{\tilde \delta_c}}
\def\tdf{{\tilde \delta_f}}
\def\hx{{\hat x}}
\def\hlambda{{\hat \lambda}}
\title{A Newton-CG based augmented Lagrangian method for finding a second-order stationary point of nonconvex equality constrained optimization with complexity guarantees}
\author{
	Chuan He\thanks{Department of Industrial and Systems Engineering, University of Minnesota, USA (email: {\tt he000233@umn.edu}, {\tt zhaosong@umn.edu}). The work of the second author was partially supported by NSF Award IIS-2211491.}
	\and
	Zhaosong Lu\footnotemark[1]\\
	\and
	Ting Kei Pong\thanks{Department of Applied Mathematics, the Hong Kong Polytechnic University, Hong Kong, People's Republic of China (email: \texttt{tk.pong@polyu.edu.hk}). The work of this author was partially supported by a Research Scheme of the Research Grants Council of Hong Kong SAR, China (Project No. T22-504/21R).}
}
\date{April 10, 2022 (Revised: September 22, 2022; 
December 31, 2022)}
\begin{document}
\maketitle
\begin{abstract}
In this paper we consider finding a second-order stationary point (SOSP) of nonconvex equality constrained optimization when a nearly feasible point is known. In particular, we first propose a new Newton-CG method for finding an approximate SOSP of unconstrained optimization and show that it enjoys a substantially better complexity than the Newton-CG method \cite{RNW18}. We then propose a Newton-CG based augmented Lagrangian (AL) method for finding an approximate SOSP of nonconvex equality constrained optimization, in which the proposed Newton-CG method is used as a subproblem solver. We show that under a generalized linear independence constraint qualification (GLICQ), our AL method enjoys a total inner iteration complexity of $\widetilde{\cO}(\epsilon^{-7/2})$ and an operation complexity of $\widetilde{\cO}(\epsilon^{-7/2}\min\{n,\epsilon^{-3/4}\})$ for finding an $(\epsilon,\sqrt{\epsilon})$-SOSP of nonconvex equality constrained optimization with high probability, which are significantly better than the ones achieved by the proximal AL method \cite{XW19}. Besides, we show that it has a total inner iteration complexity of $\widetilde{\cO}(\epsilon^{-11/2})$ and an operation complexity of $\widetilde{\cO}(\epsilon^{-11/2}\min\{n,\epsilon^{-5/4}\})$ when the GLICQ does not hold. To the best of our knowledge, all the complexity results obtained in this paper are new for finding an approximate SOSP of nonconvex equality constrained optimization with high probability. Preliminary numerical results also demonstrate the superiority of our proposed methods 
over the ones in \cite{RNW18,XW19}.

\end{abstract}

\noindent{\small {\bf Keywords}: Nonconvex equality constrained optimization, second-order stationary point, augmented Lagrangian method, Newton-conjugate gradient method, iteration complexity, operation complexity}

\medskip

\noindent{\small {\bf Mathematics Subject Classification}: 49M15, 68Q25, 90C06, 90C26, 90C30, 90C60}
\section{Introduction}
In this paper we consider nonconvex equality constrained optimization problem
\beq\label{model:equa-cnstr}
\min_{x\in \bR^n}\ f(x)\quad \st\ c(x)=0,
\eeq
where $f:\bR^n\to\bR$ and $c:\bR^n\to\bR^m$ are twice continuously differentiable, and we assume that problem~\eqref{model:equa-cnstr} has at least one optimal solution. Since \eqref{model:equa-cnstr} is a nonconvex optimization problem, it may have many local but non-global minimizers and finding its global minimizer is generally NP-hard. A first-order stationary point (FOSP) of it is usually found in practice instead. Nevertheless,  a mere FOSP may sometimes not suit our needs and a  \emph{second-order stationary point} (SOSP) needs to be sought. For example, in the context of linear semidefinite programming (SDP), a powerful approach to solving it is by solving an equivalent nonconvex equality constrained optimization problem \cite{BuMo03-1,BuMo05-1}. It was shown in \cite{BuMo05-1,BVB16} that under some mild conditions an SOSP of the latter problem can yield an optimal solution of  the linear SDP, while a mere FOSP generally cannot.  It is therefore important to find an SOSP of problem~\eqref{model:equa-cnstr}.

In recent years, numerous methods with complexity guarantees have been developed for finding an approximate SOSP of several types of nonconvex optimization. For example,  cubic regularized Newton methods \cite{NP06,CGT11b,AABHM17,CD17}, accelerated gradient methods \cite{CDHS17,CDHS18}, trust-region methods \cite{CDCS21trust,CRS17,MR17}, quadratic regularization method \cite{BM17}, second-order line-search method \cite{RW18}, and Newton-conjugate gradient (Newton-CG) method \cite{RNW18} were developed for nonconvex unconstrained optimization. In addition, interior-point method \cite{BCY15} and log-barrier method \cite{NW19} were proposed for nonconvex optimization with sign constraints. The interior-point method \cite{BCY15} was also generalized in \cite{HLY18} to solve nonconvex optimization with sign constraints and additional linear equality constraints. Furthermore, a projected gradient descent method with random perturbations was proposed in \cite{LRYHH20} for nonconvex optimization with linear inequality constraints. Iteration complexity was established for these methods for finding an approximate SOSP. Besides, operation complexity measured by the amount of fundamental operations such as gradient evaluations and matrix-vector products was also studied in \cite{AABHM17,CDHS17,CDCS21trust,JGNKJ17,CDHS18,RW18,CD17,RNW18}.

Several methods including trust-region methods \cite{BSS87,CLY02}, sequential quadratic programming method \cite{BL95}, two-phase method \cite{BGMST16,CGT19eq,CM19poly} and augmented Lagrangian (AL) type methods  \cite{AHRS17second,BHR18AL,S19iAL,XW19} were proposed for finding an SOSP of problem \eqref{model:equa-cnstr}.
However, only a few of them have \emph{complexity guarantees} for finding an approximate SOSP of \eqref{model:equa-cnstr}. In particular, the inexact AL method \cite{S19iAL} has a worst-case complexity in terms of the number of calls to a second-order oracle. Yet its  operation complexity, measured by the amount of fundamental operations such as gradient evaluations and Hessian-vector products, is unknown.
To the best of our knowledge, the proximal AL method in \cite{XW19} appears to be the only existing method that enjoys a worst-case complexity for finding an approximate SOSP of \eqref{model:equa-cnstr} in terms of fundamental operations.  In this method,  given an iterate $x^k$ and a multiplier estimate $\lambda^k$ at the $k$th iteration, the next iterate $x^{k+1}$ is obtained by finding an approximate stochastic SOSP of the proximal AL subproblem:
\begin{equation*}
\min_{x\in \bR^n}\ \cL(x,\lambda^k;\rho)+\beta\|x-x^k\|^2/2
\end{equation*}
for some suitable positive $\rho$ and $\beta$ using a Newton-CG method proposed in \cite{RNW18}, where $\cL$ is the AL function of \eqref{model:equa-cnstr} defined as
\begin{equation*}\label{AL-func}
\cL(x,\lambda;\rho):=f(x)+\lambda^Tc(x)+\rho\|c(x)\|^2/2.
\end{equation*}
Then the multiplier estimate is updated using the classical scheme, i.e., $\lambda^{k+1}=\lambda^k+\rho c(x^{k+1})$ (e.g., see \cite{H69,R93}). The authors of \cite{XW19} studied the worst-case complexity of their proximal AL method  including: (i) \textit{total inner iteration complexity}, which measures the total number of iterations of the Newton-CG method  \cite{RNW18} performed in their method; (ii) \textit{operation complexity}, which measures the total number of gradient evaluations and matrix-vector products involving the Hessian of the AL function that are evaluated in their method. Under some suitable assumptions, including that a generalized linear independence constraint qualification (GLICQ) holds at all iterates, it was established in \cite{XW19} that their proximal AL method enjoys a total inner iteration complexity of $\widetilde{\cO}(\epsilon^{-11/2})$  and an operation complexity of $\widetilde{\cO}(\epsilon^{-11/2}\min\{n,\epsilon^{-3/4}\})$ for finding an $(\epsilon,\sqrt{\epsilon})$-SOSP of problem \eqref{model:equa-cnstr} with high probability.\footnote{In fact, a total inner iteration complexity of $\widetilde{\cO}(\epsilon^{-7})$  and an operation complexity of $\widetilde{\cO}(\epsilon^{-7}\min\{n,\epsilon^{-1}\})$ were established in \cite{XW19} for finding an $(\epsilon,\epsilon)$-SOSP of problem \eqref{model:equa-cnstr} with high probability; see  \cite[Theorem~4(ii), Corollary~3(ii), Theorem~5]{XW19}.  Nonetheless, they can be modified to obtain the aforementioned complexity for finding an $(\epsilon,\sqrt{\epsilon})$-SOSP of \eqref{model:equa-cnstr} with high probability.}
Yet, there is a big gap between these complexities and
the iteration complexity of $\widetilde{\cO}(\epsilon^{-3/2})$ and the operation complexity of $\widetilde{\cO}(\epsilon^{-3/2}\min\{n,\epsilon^{-1/4}\})$ that are achieved by the methods in \cite{AABHM17,CDHS18,RW18,RNW18} for finding an $(\epsilon,\sqrt{\epsilon})$-SOSP of  nonconvex unconstrained optimization with high probability, which is a special case of \eqref{model:equa-cnstr} with $c\equiv 0$. Also, there is a lack of complexity guarantees for this proximal AL method when the GLICQ does not hold.
It shall be mentioned that Newton-CG based AL methods were also developed for efficiently solving various convex optimization problems (e.g., see \cite{YST15sdpnal,ZST10NCGAL}), though their complexities remain unknown.


In this paper we propose a Newton-CG based AL method for finding an approximate SOSP of problem~\eqref{model:equa-cnstr} with high probability, and study its worst-case complexity with and without the assumption of a GLICQ. In particular, we show that this method enjoys a total inner iteration complexity of $\widetilde{\cO}(\epsilon^{-7/2})$ and an operation complexity of $\widetilde{\cO}(\epsilon^{-7/2}\min\{n,\epsilon^{-3/4}\})$ for finding a stochastic $(\epsilon,\sqrt{\epsilon})$-SOSP of \eqref{model:equa-cnstr} under the GLICQ, which are significantly better than the aforementioned ones achieved by the proximal AL method in \cite{XW19}.  Besides, when the GLICQ does not hold, we show that it has a total inner iteration complexity of $\widetilde{\cO}(\epsilon^{-11/2})$ and an operation complexity of $\widetilde{\cO}(\epsilon^{-11/2}\min\{n,\epsilon^{-5/4}\})$ for finding a stochastic $(\epsilon,\sqrt{\epsilon})$-SOSP of \eqref{model:equa-cnstr},
which fills the research gap in this topic.  Specifically, our AL method (Algorithm~\ref{alg:2nd-order-AL-nonconvex}) proceeds in the following manner. Instead of directly solving problem~\eqref{model:equa-cnstr}, it solves a perturbed problem of \eqref{model:equa-cnstr} with $c$ replaced by its perturbed counterpart $\tilde{c}$ constructed by using a nearly feasible point of \eqref{model:equa-cnstr}
(see \eqref{model:equa-cnstr-pert} for details). At the $k$th iteration, an approximate stochastic SOSP $x^{k+1}$ of the AL subproblem of this perturbed problem is found by our newly proposed Newton-CG method (Algorithm \ref{alg:NCG}) for a penalty parameter $\rho_k$ and a truncated Lagrangian multiplier
$\lambda^k$, which results from projecting onto a Euclidean ball the standard multiplier estimate $\tilde{\lambda}^k$ obtained by the classical scheme $\tilde{\lambda}^k=\lambda^{k-1}+\rho_k \tilde{c}(x^{k})$.\footnote{The $\lambda^k$ obtained by projecting $\tilde{\lambda}^k$ onto a compact set is also called a safeguarded Lagrangian multiplier in the relevant literature \cite{BM14,KS17example,BM20}, which has been shown to enjoy many practical and theoretical advantages (see \cite{BM14} for discussions).} The penalty parameter $\rho_{k+1}$ is then updated by the following practical scheme (e.g., see \cite[Section~4.2]{B97}):
\begin{equation*}\label{rhok-update-rule}
\rho_{k+1}=\left\{\begin{array}{cl}
r\rho_k& \text{if } \|\tilde{c}(x^{k+1})\|>\alpha\|\tilde{c}(x^k)\|,\\
\rho_k&\text{otherwise}
\end{array}\right.
\end{equation*}
for some $r>1$ and $\alpha\in(0,1)$. It shall be mentioned that in contrast with the classical AL method, our method has two distinct features:
(i) the values of the AL function along the iterates are bounded from above;
(ii) the multiplier estimates associated with the AL subproblems are bounded. In addition, to solve the AL subproblems with better complexity guarantees, we propose a variant of the Newton-CG method in \cite{RNW18} for finding an approximate stochastic SOSP of unconstrained optimization, whose complexity has significantly less dependence on the Lipschitz constant of the Hessian of the objective than that of the Newton-CG method in \cite{RNW18},  while improving or retaining the same order of dependence on tolerance parameter. Given that such a Lipschitz constant is typically large for the AL subproblems, our Newton-CG method (Algorithm \ref{alg:NCG}) is a much more favorable subproblem solver
than the Newton-CG method in \cite{RNW18} that is used in the proximal AL method in \cite{XW19}
from theoretical complexity perspective.


The main contributions of this paper are summarized below.

\begin{itemize}
\item We propose a new Newton-CG method for finding an approximate SOSP of  unconstrained optimization and show that it enjoys an iteration and operation complexity with a {\it quadratic} dependence on the Lipschitz constant of the Hessian of the objective that improves the {\it cubic} dependence achieved by the Newton-CG method in \cite{RNW18}, while improving or retaining the same order of dependence on tolerance parameter.  In addition, our complexity results are established under the assumption that the Hessian of the objective is Lipschitz continuous in a convex neighborhood of a level set of the objective. This assumption is weaker than the one commonly imposed for the Newton-CG method in \cite{RNW18} and some other methods (e.g., \cite{BM17,CRS17}) that the Hessian of the objective is Lipschitz continuous  in a convex set containing this neighborhood and also  {\it all the trial points} arising in the line search or trust region steps of the methods (see Section~\ref{sec:sbpb-solver} for more detailed discussion). 

\item
We propose a Newton-CG based AL method for finding an approximate SOSP of nonconvex equality constrained optimization~\eqref{model:equa-cnstr} with high probability, and study its worst-case complexity with and without the assumption of a GLICQ. Prior to our work, there was no complexity study on finding an approximate SOSP of problem \eqref{model:equa-cnstr} without imposing a GLICQ. Besides, under the GLICQ and some other suitable assumptions, we show that our method enjoys a total inner iteration complexity of $\widetilde{\cO}(\epsilon^{-7/2})$ and an operation complexity of $\widetilde{\cO}(\epsilon^{-7/2}\min\{n,\epsilon^{-3/4}\})$ for finding an $(\epsilon,\sqrt{\epsilon})$-SOSP of \eqref{model:equa-cnstr} with high probability, which are significantly better than the respective complexity of $\widetilde{\cO}(\epsilon^{-11/2})$ and $\widetilde{\cO}(\epsilon^{-11/2}\min\{n,\epsilon^{-3/4}\})$ achieved by the proximal AL method in \cite{XW19}.
To the best of our knowledge, all the complexity results obtained in this paper are new for finding an approximate SOSP of nonconvex equality constrained optimization with high probability.
\end{itemize}

For ease of comparison, we summarize in Table~\ref{table:cmplx} the total inner iteration and operation complexity of our AL method and the proximal AL method in \cite{XW19} for finding a stochastic $(\epsilon,\sqrt{\epsilon})$-SOSP of problem \eqref{model:equa-cnstr} with or without assuming GLICQ.

\begin{table}[ht]
\caption{Total inner iteration and operation complexity of finding a stochastic $(\epsilon,\sqrt{\epsilon})$-SOSP of \eqref{model:equa-cnstr}.} 
\centering 
\vspace{2mm}
\begin{tabular}{c | c | c | c} 
\hline
 Method &GLICQ &Total inner iteration complexity & Operation complexity\\ [0.5ex]
\hline
Proximal AL method \cite{XW19}&\ding{51} & $\widetilde{\cO}(\epsilon^{-11/2})$ & $\widetilde{\cO}(\epsilon^{-11/2}\min\{n,\epsilon^{-3/4}\})$\\
Proximal AL method \cite{XW19} &\ding{55}&unknown & unknown \\
Our AL method &\ding{51} & $\widetilde{\cO}(\epsilon^{-7/2})$ & $\widetilde{\cO}(\epsilon^{-7/2}\min\{n,\epsilon^{-3/4}\})$\\
Our AL method &\ding{55}&$\widetilde{\cO}(\epsilon^{-11/2})$ & $\widetilde{\cO}(\epsilon^{-11/2}\min\{n,\epsilon^{-5/4}\})$\\
\hline
\end{tabular}
\label{table:cmplx}
\end{table}


It shall be mentioned that there are many works other than \cite{XW19} studying complexity of AL methods for nonconvex constrained optimization. However, they aim to find an approximate FOSP rather than SOSP of the problem (e.g., see \cite{HHZ17prox,GY19,BM20,MMK20ipAL,LPLLX21rAL}). Since our main focus is on the complexity of finding an approximate SOSP by AL methods, we do not include them in the above table for comparison.

The rest of this paper is organized as follows. In Section \ref{sec:not-pre}, we introduce some notation and optimality conditions. In Section~\ref{sec:sbpb-solver}, we propose a Newton-CG method  for unconstrained optimization and study its worst-case complexity. In Section \ref{sec:AL-method}, we propose a Newton-CG based AL method for \eqref{model:equa-cnstr} and study its worst-case complexity. 
We present numerical results and the proof of the main results in Sections~\ref{sec:nr} and \ref{sec:proof},  respectively. In Section~\ref{sec:cr}, we discuss some future research directions.


\section{Notation and preliminaries}\label{sec:not-pre}
Throughout this paper, we let $\bR^n$ denote the $n$-dimensional Euclidean space. We use $\|\cdot\|$ to denote the Euclidean norm of a vector or the spectral norm of a matrix. For a real symmetric matrix $H$, we use $\lambda_{\min}(H)$ to denote its minimum eigenvalue.
The Euclidean ball centered at the origin with radius $R\ge0$ is denoted by $\mathcal{B}_R:=\{x:\|x\|\le R\}$, and we use $\Pi_{\mathcal{B}_R}(v)$ to denote the Euclidean projection of a vector $v$ onto $\mathcal{B}_R$. For a given finite set $\cA$, we let $|\cA|$ denote its cardinality. For any $s\in \mathbb{R}$, we let ${\rm sgn}(s)$ be $1$ if $s \ge 0$ and let it be $-1$ otherwise. In addition, $\widetilde{\cO}(\cdot)$ represents $\cO(\cdot)$ with logarithmic terms omitted. 
	
Suppose that $x^*$ is a local minimizer of problem \eqref{model:equa-cnstr} and the linear independence constraint qualification holds at $x^*$, i.e.,
$\nabla c(x^*):=[\nabla c_1(x^*) \ \nabla c_2(x^*) \ \cdots \ \nabla c_m(x^*)]$
has full column rank. Then there exists a Lagrangian multiplier $\lambda^*\in\bR^m$ such that 
\begin{eqnarray} 
&&\nabla f(x^*)+\nabla c(x^*)\lambda^*=0,\label{exact-1st-opt}\\
&& d^T\left(\nabla^2 f(x^*)+\sum_{i=1}^m\lambda_i^*\nabla^2 c_i(x^*)\right)d\ge0,\quad \forall d\in\cC(x^*),\label{exact-2nd-opt}
\end{eqnarray}
where $\cC(\cdot)$ is defined as 
\begin{equation}\label{def:critical-cone} 
\cC(x):=\{d\in\bR^n:\nabla c(x)^T d=0\}.
\end{equation}
The relations \eqref{exact-1st-opt} and \eqref{exact-2nd-opt} are respectively known as the first- and second-order optimality conditions for \eqref{model:equa-cnstr} in the literature (e.g., see \cite{NW06}). Note that it is in general impossible to find a point that exactly satisfies \eqref{exact-1st-opt} and \eqref{exact-2nd-opt}. Thus, we are instead interested in finding a point that satisfies their approximate counterparts. In particular, we introduce the following definitions of an approximate first-order stationary point (FOSP) and second-order stationary point (SOSP), which are similar to those considered in \cite{AHRS17second,BHR18AL,XW19}.
 The rationality of them can be justified by the study of the sequential optimality conditions for constrained optimization \cite{AHM11soc,AHRS17second}.

\begin{definition}[{{\bf $\epsilon_1$-first-order stationary point}}]\label{def:FOSP}
Let $\epsilon_1>0$. We say that $x\in\bR^n$ is an $\epsilon_1$-first-order stationary point {\rm ($\epsilon_1$-FOSP)} of problem \eqref{model:equa-cnstr} if it, together with some $\lambda\in\bR^m$, satisfies 
\begin{equation}\label{optcond:1st-equa-cnstr} 
\|\nabla f(x)+\nabla c(x)\lambda\|\le \epsilon_1,\quad \|c(x)\|\le \epsilon_1.
\end{equation}
\end{definition}
	
\begin{definition}[{{\bf $(\epsilon_1,\epsilon_2)$-second-order stationary point}}]\label{def:SOSP}
Let $\epsilon_1,\epsilon_2>0$. We say that $x\in\bR^n$ is an $(\epsilon_1,
\epsilon_2)$-second-order stationary point {\rm (($\epsilon_1,
\epsilon_2$)-SOSP)} of problem \eqref{model:equa-cnstr} if it, together with some $\lambda\in\bR^m$, satisfies \eqref{optcond:1st-equa-cnstr} and additionally 
\begin{equation}\label{optcond:2nd-equa-cnstr} 
d^T\left(\nabla^2 f(x)+\sum_{i=1}^m\lambda_i\nabla^2 c_i(x)\right)d\ge-\epsilon_2\|d\|^2,\quad \forall d\in\cC(x),
\end{equation}
where $\cC(\cdot)$ is defined as in \eqref{def:critical-cone}.
\end{definition}

\section{A Newton-CG method for unconstrained optimization}\label{sec:sbpb-solver}

In this section we propose a variant of Newton-CG method \cite[Algorithm~3]{RNW18}  for finding an approximate SOSP of a class of unconstrained optimization problems, which will be used as a subproblem solver for the AL method proposed in the next section. In particular, we consider an unconstrained optimization problem 
\begin{equation}\label{unconstrained-prob} 
\min_{x\in\bR^n}\ F(x),
\end{equation}
where the function $F$ satisfies the following assumptions.

\begin{assumption}\label{asp:NCG-cmplxity}
\begin{enumerate}[{\rm (a)}]
\item The level set $\scL_F(u^0):=\{x: F(x)\le F(u^0)\}$ is compact for some $u^0\in \bR^n$.
\item The function $F$ is twice Lipschitz continuously differentiable in
a convex open neighborhood, denoted by $\Omega$, of $\scL_F(u^0)$,
that is, there exists $L_{H}^F>0$ such that
\begin{equation}\label{F-Hess-Lip}
\|\nabla^2 F(x)-\nabla^2 F(y)\|\le L_{H}^F\|x-y\|,\quad \forall x,y\in \Omega.
\end{equation}
\end{enumerate}
\end{assumption}

By Assumption \ref{asp:NCG-cmplxity}, there exist $\Fl\in \bR$, $U_g^F>0$ and $U_H^F>0$ such that
\begin{equation}\label{lwbd-Hgupbd}
F(x)\ge \Fl,\quad \|\nabla F(x)\|\le U_g^F,\quad \|\nabla^2 F(x)\|\le U_H^F, \quad \forall x\in\scL_F(u^0).
\end{equation}

Recently, a Newton-CG method \cite[Algorithm~3]{RNW18}  was developed to find an approximate stochastic SOSP of problem \eqref{unconstrained-prob}, which is not only easy to implement but also enjoys a nice feature that the main computation consists only of gradient evaluations and Hessian-vector products associated with the function $F$. Under the assumption that $\nabla^2 F$ is Lipschitz continuous in a convex open set containing $\scL_F(u^0)$ and also {\it all the trial points} arising in the line search steps of this method (see \cite[Assumption~2]{RNW18}), it was established in \cite[Theorem~4, Corollary~2]{RNW18} that the iteration and operation complexity of this method  for finding a stochastic $(\epsilon_g,\epsilon_H)$-SOSP of \eqref{unconstrained-prob} (namely, a point $x$ satisfying $\|\nabla F(x)\|\le\epsilon_g$ deterministically and $\lambda_{\min}(\nabla^2 F(x))\ge-\epsilon_H$ with high probability) are 
\begin{equation}\label{NCG-cmplx} 
\cO((L_H^F)^3\max\{\epsilon_g^{-3}\epsilon_H^3,\epsilon_H^{-3}\})\quad \text{and}\quad \widetilde{\cO}((L_H^F)^3\max\{\epsilon_g^{-3}\epsilon_H^3,\epsilon_H^{-3}\}\min\{n,(U_{H}^F/\epsilon_H)^{1/2}\}),
\end{equation}
respectively, where $\epsilon_g, \epsilon_H \in (0,1)$ are prescribed tolerances.
\emph{Yet, this assumption can be hard to check} because these trial points are \emph{unknown} before the method terminates and moreover the distance between the origin and them depends on the tolerance $\epsilon_H$ in $\cO(\epsilon_H^{-1})$ (see \cite[Lemma~3]{RNW18}).
In addition, as seen from \eqref{NCG-cmplx}, iteration and operation complexity of the Newton-CG method in \cite{RNW18} depend {\it cubically} on $L^F_H$.  Notice that $L^F_H$ can sometimes be very large. For example,  the AL subproblems arising in Algorithm \ref{alg:2nd-order-AL-nonconvex} have $L^F_H=\cO(\epsilon_1^{-2})$ or $\cO(\epsilon_1^{-1})$, where $\epsilon_1 \in (0,1)$ is a prescribed tolerance for problem  \eqref{model:equa-cnstr} (see Section \ref{sec:AL-method}). The cubic dependence on $L^F_H$ makes such a Newton-CG method not appealing as an AL subproblem solver from theoretical complexity perspective.

In the rest of this section, we propose a variant of the Newton-CG method \cite[Algorithm~3]{RNW18} and show that under Assumption \ref{asp:NCG-cmplxity}, it enjoys an iteration and operation complexity of 
\begin{equation}\label{NCG-cmplx-1} 
\cO((L_H^F)^2\max\{\epsilon_g^{-2}\epsilon_H,\epsilon_H^{-3}\})\quad \text{and}\quad \widetilde{\cO}((L_H^F)^2\max\{\epsilon_g^{-2}\epsilon_H,\epsilon_H^{-3}\}\min\{n,(U_{H}^F/\epsilon_H)^{1/2}\}),
\end{equation}
for finding a stochastic $(\epsilon_g,\epsilon_H)$-SOSP of problem \eqref{unconstrained-prob},
respectively. These complexities are substantially superior to those in \eqref{NCG-cmplx} achieved by the Newton-CG method in \cite{RNW18}.
Indeed, the complexities in \eqref{NCG-cmplx-1} depend quadratically on $L^F_H$,  while those in \eqref{NCG-cmplx} depend cubically on $L^F_H$. In addition, it can be verified that they improve or retain the order of dependence on $\epsilon_g$ and $\epsilon_H$ given in \eqref{NCG-cmplx}. 

\subsection{Main components of a Newton-CG method}\label{sbsc:main-cmpnts}

In this subsection we briefly discuss two main components of the Newton-CG method in \cite{RNW18}, which will be used to propose a variant of this method for finding an approximate stochastic SOSP of problem \eqref{unconstrained-prob} in the next subsection.

The first main component of the Newton-CG method in \cite{RNW18} is a {\it capped CG method} \cite[Algorithm~1]{RNW18}, which is a modified CG method, for solving a possibly indefinite linear system 
\begin{equation}\label{indef-sys} 
(H+2\varepsilon I)d=-g,
\end{equation}
where $0\neq g\in\bR^n$, $\varepsilon>0$, and $H\in\bR^{n\times n}$ is a symmetric matrix. This capped CG method terminates within a finite number of iterations. It outputs either an approximate solution $d$ to \eqref{indef-sys} such that $\|(H+2\varepsilon I)d+g\|\le\widehat{\zeta}\|g\|$ and $d^THd\ge-\varepsilon\|d\|^2$ for some $\widehat{\zeta}\in(0,1)$ or a sufficiently negative curvature direction $d$ of $H$ with $d^THd<-\varepsilon\|d\|^2$. The second main component of the Newton-CG method in \cite{RNW18} is a minimum eigenvalue oracle that either produces a sufficiently negative curvature direction $v$ of $H$ with $\|v\|=1$ and  $v^THv\le-\varepsilon/2$ or certifies that $\lambda_{\min}(H)\ge-\varepsilon$ holds with high probability. For ease of reference, we present these two components in Algorithms~\ref{alg:capped-CG}  and \ref{pro:meo} in Appendices~\ref{appendix:capped-CG} and \ref{appendix:meo}, respectively.

\begin{algorithm}[h]
\caption{A Newton-CG method for problem \eqref{unconstrained-prob}}
\label{alg:NCG}
{\footnotesize
\begin{algorithmic}
\State \noindent\textit{Input}: Tolerances $\epsilon_g,\epsilon_H\in(0,1)$, backtracking ratio $\theta\in(0,1)$, starting point $u^0$, CG-accuracy parameter $\zeta\in(0,1)$, line-search parameter $\eta\in(0,1)$, probability parameter $\delta\in(0,1)$.\\
Set $x^0=u^0$;
\For{$t=0,1,2,\ldots$}
\If{$\|\nabla F(x^t)\|>\epsilon_g$}
\State Call Algorithm~\ref{alg:capped-CG} with $H=\nabla^2 F(x^t)$, $\varepsilon=\epsilon_H$, $g=\nabla F(x^t)$, accuracy parameter $\zeta$, and $U=0$ to obtain outputs $d$,
\State d$\_$type;
\If{d$\_$type=NC}
\begin{equation}\label{dk-nc}
d^t\leftarrow -\sgn(d^T\nabla F(x^t))\frac{|d^T\nabla^2 F(x^t) d|}{\|d\|^3}d;
\end{equation}
\Else\ \{d$\_$type=SOL\}
\begin{equation}\label{dk-sol}
d^t\leftarrow d;
\end{equation}
\EndIf
\State Go to {\bf Line Search};
\Else
\State Call Algorithm~\ref{pro:meo} with $H=\nabla^2 F(x^t)$, $\varepsilon=\epsilon_H$, and probability parameter $\delta$;
\If{Algorithm~\ref{pro:meo} certifies that $\lambda_{\min}(\nabla^2 F(x^t))\ge-\epsilon_H$}
\State Output $x^t$ and terminate;
\Else\ \{Sufficiently negative curvature direction $v$ returned by Algorithm~\ref{pro:meo}\}
\State Set d$\_$type=NC and
\begin{equation}\label{dk-2nd-nc}
d^t\leftarrow -\sgn(v^T\nabla F(x^t))|v^T\nabla^2F(x^t)v|v;
\end{equation}
\State Go to {\bf Line Search};
\EndIf
\EndIf
\State{\bf Line Search:}
\If{d$\_$type=SOL}
\State Find $\alpha_t=\theta^{j_t}$, where $j_t$ is the smallest nonnegative integer $j$ such that
\begin{equation}\label{ls-sol}
F(x^t+\theta^jd^t)<F(x^t)-\eta\epsilon_H\theta^{2j}\|d^t\|^2;
\end{equation}
\Else\ \{d$\_$type=NC\}
\State Find $\alpha_t=\theta^{j_t}$, where $j_t$ is the smallest nonnegative integer $j$ such that
\begin{equation}\label{ls-nc}
F(x^t+\theta^jd^t)<F(x^t)-\eta\theta^{2j}\|d^t\|^3/2;
\end{equation}
\EndIf
\State $x^{t+1}=x^t+\alpha_td^t$;
\EndFor
\end{algorithmic}
}
\end{algorithm}

\subsection{A Newton-CG method for problem \eqref{unconstrained-prob}}

In this subsection we propose a Newton-CG method in Algorithm~\ref{alg:NCG}, which is a variant of the Newton-CG method \cite[Algorithm~3]{RNW18},  for finding an approximate stochastic SOSP of problem \eqref{unconstrained-prob}.

Our Newton-CG method (Algorithm~\ref{alg:NCG}) follows the same framework as \cite[Algorithm~3]{RNW18}. In particular, at each iteration, if the gradient of $F$ at the current iterate is not desirably small, then the capped CG method (Algorithm~\ref{alg:capped-CG}) is called to solve a damped Newton system for obtaining a descent direction and a subsequent line search along this direction results in a sufficient reduction on $F$.  Otherwise, the current iterate is already an approximate first-order stationary point of \eqref{unconstrained-prob}, and the minimum eigenvalue oracle (Algorithm~\ref{pro:meo}) is then called, which either produces a sufficiently negative curvature direction for $F$ and a subsequent line search along this direction results in a sufficient reduction on $F$, or certifies that the current iterate is an approximate SOSP of \eqref{unconstrained-prob} with high probability and terminates the algorithm. More details about this framework can be found in \cite{RNW18}.

Despite sharing the same framework, our Newton-CG method and \cite[Algorithm~3]{RNW18} use different line search criteria. Indeed, our Newton-CG method uses a hybrid line search criterion adopted from \cite{XW21}, which is a combination of the quadratic descent criterion \eqref{ls-sol} and the cubic descent criterion \eqref{ls-nc}. Specifically,  it uses the quadratic descent criterion \eqref{ls-sol} when the search direction is of type `SOL'. On the other hand, it uses the cubic descent criterion \eqref{ls-nc} when the search direction is of type `NC'.\footnote{SOL and NC stand for ``approximate solution'' and ``negative curvature'', respectively.}
In contrast, the Newton-CG method in \cite{RNW18} always uses a cubic descent criterion regardless of the type of search directions. As observed from Theorem \ref{thm:NCG-iter-oper-cmplxity} below, our Newton-CG method achieves an iteration and operation complexity given in \eqref{NCG-cmplx-1}, which are
 superior to those in \eqref{NCG-cmplx} achieved by \cite[Algorithm~3]{RNW18} in terms of the order dependence on $L^F_H$, while improving or retaining the order of dependence on $\epsilon_g$ and $\epsilon_H$ as given in \eqref{NCG-cmplx}. Consequently, our Newton-CG method is more appealing than \cite[Algorithm~3]{RNW18} as an AL subproblem solver for
the AL method proposed in Section \ref{sec:AL-method} from theoretical complexity perspective.

The following theorem states the iteration and operation complexity of Algorithm~\ref{alg:NCG}, whose proof is deferred to Section \ref{sec:pf-NCG}.

\begin{theorem}
\label{thm:NCG-iter-oper-cmplxity}
Suppose that Assumption~\ref{asp:NCG-cmplxity} holds. Let 
\begin{equation}
T_1:=\left\lceil\frac{\Fh-\Fl}{\min\{c_{\rm sol},c_{\rm nc}\}}\max\{\epsilon_g^{-2}\epsilon_H,\epsilon_H^{-3}\}\right\rceil+\left\lceil\frac{\Fh-\Fl}{c_{\rm nc}}\epsilon_H^{-3}\right\rceil+1, \
T_2:=\left\lceil\frac{\Fh-\Fl}{c_{\rm nc}}\epsilon_H^{-3}\right\rceil+1,\label{T1}
\end{equation}
where $\Fh=F(u^0)$, $\Fl$ is given in \eqref{lwbd-Hgupbd}, and 
\beqa
&&c_{\rm sol} :=\eta\min\left\{\left[\frac{4}{4+\zeta+\sqrt{(4+\zeta)^2+8L^F_H}}\right]^2,\left[\frac{\min\{6(1-\eta),2\}\theta}{L^F_H}\right]^2\right\}, \label{csol} \\
&& c_{\rm nc} := \frac{\eta}{16} \min\left\{1,\left[\frac{\min\{3(1-\eta),1\}\theta}{L_H^F}\right]^2\right\}.   \label{cnc}
\eeqa
 Then the following statements hold.
\begin{enumerate}[{\rm (i)}]
\item The total number of calls of Algorithm~\ref{pro:meo} in Algorithm~\ref{alg:NCG} is at most $T_2$.	
\item The total number of calls of Algorithm~\ref{alg:capped-CG} in Algorithm~\ref{alg:NCG} is at most $T_1$.
\item {\rm ({\bf iteration complexity})} Algorithm~\ref{alg:NCG} terminates in at most $T_1+T_2$ iterations with 
\begin{equation}\label{NCG-iter} 
T_1+T_2=\cO((\Fh-\Fl)(L_H^F)^2\max\{\epsilon_g^{-2}\epsilon_H,\epsilon_H^{-3}\}).
\end{equation}
Also, its output $x^t$ satisfies $\|\nabla F(x^t)\|\le \epsilon_g$ deterministically and
$\lambda_{\min}(\nabla^2F(x^t))$ $\ge-\epsilon_H$ with probability at least $1-\delta$ for
some $0 \le t \le T_1+T_2$.
\item {\rm ({\bf operation complexity})} Algorithm~\ref{alg:NCG} requires at most 
\[
\widetilde{\cO}((\Fh-\Fl)(L_H^F)^2\max\{\epsilon_g^{-2}\epsilon_H,\epsilon_H^{-3}\}\min\{n,(U_H^F/\epsilon_H)^{1/2}\})
\]
matrix-vector products, where $U_H^F$ is given in \eqref{lwbd-Hgupbd}.
\end{enumerate}
\end{theorem}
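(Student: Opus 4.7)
The plan is to adapt the standard analysis paradigm for Newton-CG style methods to the hybrid (quadratic/cubic) line search of Algorithm~\ref{alg:NCG}. The first step is to verify that every iterate $x^t$ produced by the algorithm remains in the sublevel set $\scL_F(u^0)$; this matters because Assumption~\ref{asp:NCG-cmplxity}(b) only imposes Lipschitz continuity of $\nabla^2 F$ on a neighborhood $\Omega$ of $\scL_F(u^0)$, rather than on a larger set containing all trial points as in the hypothesis used in \cite{RNW18}. Since both line-search criteria \eqref{ls-sol} and \eqref{ls-nc} enforce strict function decrease, a trivial induction gives $x^t\in\scL_F(u^0)$ for all $t$; one then argues that the accepted (and, with a bit more care, the rejected) trial points $x^t+\theta^j d^t$ lie in $\Omega$, so that \eqref{F-Hess-Lip} is available throughout the backtracking analysis.

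The core of the proof is a per-iteration decrease lemma with the correct dependence on $L_H^F$. For iterations where Algorithm~\ref{alg:capped-CG} returns a SOL direction, I would use the capped-CG residual/positivity guarantees to lower-bound $\|d^t\|$ in terms of $\|\nabla F(x^t)\|$ and the operator norm of $\nabla^2 F(x^t)+2\epsilon_H I$, then combine a Taylor expansion with \eqref{F-Hess-Lip} to show that the smallest accepted step satisfies $\alpha_t\ge\min\{1,\,6(1-\eta)\theta/L_H^F,\,2\theta/L_H^F\}$; the \emph{quadratic} descent criterion \eqref{ls-sol} then yields a decrease of at least $c_{\rm sol}\,\epsilon_g^{2}\epsilon_H^{-1}$. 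It is precisely the use of a quadratic (rather than cubic) line search here that produces the $(L_H^F)^{-2}$ factor in $c_{\rm sol}$ and hence the improved quadratic dependence on $L_H^F$ in \eqref{NCG-cmplx-1}. For iterations using an NC direction --- either returned by Algorithm~\ref{alg:capped-CG} through \eqref{dk-nc} or produced by Algorithm~\ref{pro:meo} through \eqref{dk-2nd-nc} --- the curvature estimate $d^T\nabla^2 F(x^t) d\le -\epsilon_H\|d\|^2$ together with the Hessian-Lipschitz bound gives $\alpha_t\ge\min\{1,\,3(1-\eta)\theta/L_H^F,\,\theta/L_H^F\}$, and the cubic criterion \eqref{ls-nc} yields a decrease of at least $c_{\rm nc}\,\epsilon_H^{3}$.

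With these two decrease estimates in hand, parts (i) and (ii) follow from elementary counting. The number of NC iterations (including those triggered by Algorithm~\ref{pro:meo}) is at most $(\Fh-\Fl)/(c_{\rm nc}\epsilon_H^{3})$, giving the bound on meo calls in (i) after adding the single terminating call; the number of SOL iterations is at most $(\Fh-\Fl)/(c_{\rm sol}\epsilon_g^{2}\epsilon_H^{-1})$, and adding the two contributions to capped-CG calls gives $T_1$. Part (iii) is then immediate: termination in at most $T_1+T_2$ iterations, the deterministic bound $\|\nabla F(x^t)\|\le\epsilon_g$ at termination, and the probabilistic bound $\lambda_{\min}(\nabla^2 F(x^t))\ge-\epsilon_H$ with probability at least $1-\delta$ both follow from the stopping logic and the correctness guarantee of Algorithm~\ref{pro:meo}. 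For the operation complexity in (iv), I would multiply $T_1+T_2$ by the per-iteration Hessian-vector product cost, invoking the $\widetilde{\cO}(\min\{n,(U_H^F/\epsilon_H)^{1/2}\})$ bound for Algorithm~\ref{alg:capped-CG} and for the Lanczos-based oracle of Appendix~\ref{appendix:meo}.

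The main technical obstacle is the SOL-step decrease lemma: one must carefully combine the capped-CG residual bound, the $d^T\nabla^2F(x^t)d\ge-\epsilon_H\|d\|^2$ certificate, and backtracking on the \emph{quadratic} criterion to extract the $(L_H^F)^{-2}$ scaling of $c_{\rm sol}$ rather than the $(L_H^F)^{-3}$ scaling that the cubic line search of \cite{RNW18} produces. A secondary but conceptually important issue is to handle the weakened Lipschitz hypothesis on $\Omega$ rigorously: rejected trial points need only obey a local Hessian Lipschitz estimate on a ball around $x^t$, which is contained in $\Omega$ once $\theta^j\|d^t\|$ is small enough, while accepted trial points lie automatically in $\scL_F(u^0)\subset\Omega$ by the decrease guarantee.
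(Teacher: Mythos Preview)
Your overall architecture is right, but the SOL-step decrease you state is not obtainable in the way you describe, and this breaks the counting for part~(ii) with the constant $c_{\rm sol}$ given in \eqref{csol}.

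First, the step-size bound you claim for SOL directions, $\alpha_t\ge\min\{1,6(1-\eta)\theta/L_H^F,2\theta/L_H^F\}$, is the NC bound (Lemma~\ref{lem:nc}(i)), not the SOL one. For a SOL direction the failed-step analysis (see \eqref{ineq:desc-ls-sol}) only gives $\theta^{2j}\ge \min\{6(1-\eta),2\}\epsilon_H/(L_H^F\|d^t\|)$, so the accepted step scales like $\epsilon_H^{1/2}\|d^t\|^{-1/2}$. Consequently, when backtracking is needed the quadratic criterion \eqref{ls-sol} yields a decrease of order $\epsilon_H^3$, \emph{not} $\epsilon_g^{2}\epsilon_H^{-1}$; a uniform SOL decrease of $c_{\rm sol}\epsilon_g^{2}\epsilon_H^{-1}$ is simply false in general.

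Second, and more fundamentally, lower-bounding $\|d^t\|$ via $\|\nabla F(x^t)\|$ and the operator norm of $\nabla^2F(x^t)+2\epsilon_H I$ injects $U_H^F$ into the decrease constant, which is absent from $c_{\rm sol}$ as stated. The paper avoids this by bounding $\|d^t\|$ below in terms of $\|\nabla F(x^{t+1})\|$ instead: when $\alpha_t=1$, combine \eqref{apx-nxt-grad} with the capped-CG residual bound \eqref{SOL-ppty-4} to get
\[
\|\nabla F(x^{t+1})\|\le \tfrac{L_H^F}{2}\|d^t\|^2+\tfrac{4+\zeta}{2}\epsilon_H\|d^t\|,
\]
and solve for $\|d^t\|$. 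This is exactly what produces the first term in \eqref{csol} with only $L_H^F$-dependence and gives the correct SOL decrease
\[
F(x^t)-F(x^{t+1})\ \ge\ c_{\rm sol}\,\min\{\|\nabla F(x^{t+1})\|^2\epsilon_H^{-1},\ \epsilon_H^{3}\}.
\]
The counting for (ii) must then split capped-CG iterations according to whether $\|\nabla F(x^{t+1})\|>\epsilon_g$ (these give decrease $\ge \min\{c_{\rm sol},c_{\rm nc}\}\min\{\epsilon_g^2\epsilon_H^{-1},\epsilon_H^3\}$) or $\|\nabla F(x^{t+1})\|\le\epsilon_g$ (these are followed by a call to Algorithm~\ref{pro:meo}, so there are at most $T_2$ of them). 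This ``look-ahead'' to $\nabla F(x^{t+1})$ is the missing ingredient; without it you cannot recover the specific $T_1$ in \eqref{T1}.
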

	
\section{A Newton-CG based AL method for problem~\eqref{model:equa-cnstr}}
\label{sec:AL-method}

In this section we propose a Newton-CG based AL method for finding a stochastic $(\epsilon_1,\epsilon_2)$-SOSP of problem \eqref{model:equa-cnstr} for any prescribed tolerances $\epsilon_1,\epsilon_2\in(0,1)$. Before proceeding, we make some additional assumptions on problem~\eqref{model:equa-cnstr}.

\begin{assumption}\label{asp:lowbd-knownfeas}		
\begin{enumerate}[{\rm (a)}]
\item An $\epsilon_1/2$-approximately feasible point $z_{\epsilon_1}$ of problem~\eqref{model:equa-cnstr}, namely satisfying $\|c(z_{\epsilon_1})\|\le\epsilon_1/2$, is known.	
\item There exist constants $\fh$, $\fl$ and $\gamma>0$, independent of $\epsilon_1$ and $\epsilon_2$,  such that
\begin{eqnarray}
&& f(z_{\epsilon_1})\le\fh, \label{hbd}\\
&& f(x)+\gamma\|c(x)\|^2/2\ge \fl,\quad \forall x\in\bR^n, \label{lbd}
\end{eqnarray}
 where $z_{\epsilon_1}$ is given in (a).
\item 
There exist some $\bdelta_f,\bdelta_c>0$ such that the set
\begin{equation}\label{nearly-feas-level-set}
\cS(\bdelta_f,\bdelta_c):=\{x:f(x)\le \fh+\bdelta_f,\ \|c(x)\|\le1+\bdelta_c\}
\end{equation}
is compact with $\fh$ given above. Also, $\nabla^2 f$ and $\nabla^2 c_i,\ i=1,2,\ldots,m$, are Lipschitz continuous in a convex open neighborhood, denoted by $\Omega(\bdelta_f,\bdelta_c)$, of $\cS(\bdelta_f,\bdelta_c)$.
\end{enumerate}
\end{assumption}

We now make some remarks on Assumption \ref{asp:lowbd-knownfeas}.

\begin{remark}
\begin{itemize}
 \item[(i)] A very similar assumption as Assumption~\ref{asp:lowbd-knownfeas}(a) was considered in \cite{CGLY17,GY19,LZ12,XW19}. By imposing Assumption~\ref{asp:lowbd-knownfeas}(a), we restrict our study on problem \eqref{model:equa-cnstr} for which an  $\epsilon_1/2$-approximately feasible point $z_{\epsilon_1}$ can be found by an inexpensive procedure. One example of such problem instances arises when there exists $v^0$ such that $\{x:\|c(x)\|\le \|c(v^0)\|\}$ is compact, $\nabla^2 c_i$, $1\le i\le m$, is Lipschitz continuous on a convex neighborhood of this set, and the LICQ holds on this set. Indeed, for this instance, a point $z_{\epsilon_1}$ satisfying $\|c(z_{\epsilon_1})\|\le\epsilon_1/2$ can be computed by applying our Newton-CG method (Algorithm~\ref{alg:NCG}) to the problem $\min_{x\in\bR^n}\|c(x)\|^2$. As seen from Theorem~\ref{thm:NCG-iter-oper-cmplxity}, the resulting iteration and operation complexity of Algorithm~\ref{alg:NCG} for finding such $z_{\epsilon_1}$ are respectively $\cO(\epsilon_1^{-3/2})$ and $\widetilde{\cO}(\epsilon_1^{-3/2}\min\{n,\epsilon_1^{-1/4}\})$, which are negligible compared with those of our AL method (see Theorems~\ref{thm:total-iter-cmplxity} and \ref{thm:total-iter-cmplxity2} below). As another example, when the standard error bound condition $\|c(x)\|^2=\cO(\|\nabla (\|c(x)\|^2)\|^\nu)$ holds on a level set of $\|c(x)\|$ for some $\nu>0$, one can find the above $z_{\epsilon_1}$ by applying a gradient method to the problem $\min_{x\in\bR^n}\|c(x)\|^2$ (e.g., see \cite{lu2022single,S19iAL}). In addition, the Newton-CG based AL method (Algorithm \ref{alg:2nd-order-AL-nonconvex}) proposed below is a second-order method with the aim to find a second-order stationary point. It is more expensive than a first-order method in general. To make best use of such an AL method in practice,  it is natural to run a first-order method in advance to obtain an $\epsilon_1/2$-first-order stationary point $z_{\epsilon_1}$
and then run the AL method using $z_{\epsilon_1}$ as an $\epsilon_1/2$-approximately feasible point. Therefore, Assumption~\ref{asp:lowbd-knownfeas}(a) is met in practice, provided that an $\epsilon_1/2$-first-order stationary point of \eqref{model:equa-cnstr} can be found by a first-order method.
	
\item[(ii)]
Assumption \ref{asp:lowbd-knownfeas}(b) is mild. In particular, the assumption in \eqref{hbd} holds if $f(x)\le\fh$ holds for all $x$ with $\|c(x)\|\le 1$, which is imposed in \cite[Assumption~3]{XW19}. It also holds if problem~\eqref{model:equa-cnstr} has a known feasible point, which is often imposed for designing AL methods for nonconvex constrained optimization (e.g., see \cite{LZ12,CGLY17,LL18,GY19}). Besides, the assumption in \eqref{lbd} implies that the quadratic penalty function is bounded below when the associated penalty parameter is sufficiently large, which is typically used in the study of quadratic penalty and AL methods for solving problem \eqref{model:equa-cnstr} (e.g., see \cite{HHZ17prox,GY19,XW19,KMM19}).
Clearly, when $\inf_{x\in {\mathbb{R}}^n}f(x) > -\infty$, one can see that \eqref{lbd} holds for any $\gamma > 0$. In general,  one possible approach to identifying $\gamma$ is to apply the techniques on infeasibility detection developed in the literature (e.g.,  \cite{BCN10if,BCW14,AT19})
to check the infeasibility of the level set $\{x: f(x)+\gamma\|c(x)\|^2/2\le \tilde{f}_{\low}\}$ for some sufficiently small $\tilde{f}_{\low}$. Note that this level set being infeasible for some $\tilde{f}_{\low}$ implies that \eqref{lbd} holds for the given $\gamma$ and $f_{\low}=\tilde{f}_{\low}$.

\item[(iii)]  Assumption \ref{asp:lowbd-knownfeas}(c) is not too restrictive. Indeed, the set $\cS(\delta_f,\delta_c)$ is compact if $f$ or $f(\cdot)+\gamma\|c(\cdot)\|^2/2$ is level-bounded. The latter level-boundedness assumption is commonly imposed for studying AL methods (e.g., see \cite{GY19,XW19}), which is stronger than our assumption.

\end{itemize}
\end{remark}

We next propose a Newton-CG based AL method in Algorithm \ref{alg:2nd-order-AL-nonconvex} for finding a stochastic $(\epsilon_1,\epsilon_2)$-SOSP of problem~\eqref{model:equa-cnstr} under Assumption \ref{asp:lowbd-knownfeas}. Instead of solving \eqref{model:equa-cnstr} directly, this method solves the perturbed problem:
\begin{equation}\label{model:equa-cnstr-pert}
\min_{x\in \bR^n}\ f(x)\quad \st\ \tilde{c}(x):=c(x)-c(z_{\epsilon_1})=0,
\end{equation}
where $z_{\epsilon_1}$ is given in Assumption~\ref{asp:lowbd-knownfeas}(a). Specifically, at the $k$th iteration, this method applies the Newton-CG method (Algorithm \ref{alg:NCG}) to find an approximate stochastic SOSP $x^{k+1}$  of the AL subproblem associated with \eqref{model:equa-cnstr-pert}:
\begin{equation}\label{tL-tc}
\min_{x\in\bR^n} \big\{\widetilde{\cL}(x,\lambda^k,\rho_k):= f(x)+(\lambda^k)^T\tilde{c}(x)+\rho_k\|\tilde{c}(x)\|^2/2\big\}
\end{equation}
such that $\widetilde{\cL}(x^{k+1},\lambda^k;\rho_k)$ is below a threshold (see \eqref{algstop:1st-order} and \eqref{algstop:2nd-order}), where $\lambda^k$ is a truncated Lagrangian multiplier, i.e., the one that results from projecting the standard multiplier  estimate $\tilde{\lambda}^k$ onto an Euclidean ball (see step 6 of Algorithm \ref{alg:2nd-order-AL-nonconvex}). The standard multiplier estimate $\tilde{\lambda}^{k+1}$ is then updated by the classical scheme described in step 4 of Algorithm \ref{alg:2nd-order-AL-nonconvex}. Finally, the penalty parameter $\rho_{k+1}$ is
adaptively updated based on the improvement on constraint violation (see step~\ref{altstep:penalty} of Algorithm~\ref{alg:2nd-order-AL-nonconvex}). Such a practical update scheme is often adopted in the literature (e.g., see \cite{B97,ABM08,CGLY17}).

We would like to point out that the truncated Lagrangian multiplier sequence $\{\lambda^k\}$ is used in the AL subproblems of  Algorithm \ref{alg:2nd-order-AL-nonconvex} and  is bounded, while the standard Lagrangian multiplier sequence $\{\tilde{\lambda}^k\}$ is used in those of the classical AL methods and can be unbounded. Therefore, Algorithm \ref{alg:2nd-order-AL-nonconvex} can be viewed as a safeguarded AL method. Truncated Lagrangian multipliers have been used in the literature for designing some AL methods  \cite{ABM08,BM14,KS17example,BM20}, and will play a crucial role in the subsequent complexity analysis of Algorithm \ref{alg:2nd-order-AL-nonconvex}.

\begin{algorithm}[h]
\caption{A Newton-CG based AL method for problem
\eqref{model:equa-cnstr}}
\label{alg:2nd-order-AL-nonconvex}
{\small
Let $\gamma$ be given in Assumption \ref{asp:lowbd-knownfeas}.\\
\noindent\textbf{Input}: $\epsilon_1,\epsilon_2\in(0,1)$, $\Lambda>0$, $x^0\in\bR^n$, $\lambda^0 \in \mathcal{B}_{\Lambda}$, $\rho_0>2\gamma$, $\alpha\in(0,1)$, ${r}>1$, $\delta\in(0,1)$, and  $z_{\epsilon_1}$ given in Assumption~\ref{asp:lowbd-knownfeas}.
\begin{algorithmic}[1]
\State Set $k=0$.
\State Set $\tau_k^g=\max\{\epsilon_1, {r}^{k\log\epsilon_1/\log 2}\}$ and $\tau_k^H=\max\{\epsilon_2, {r}^{k\log\epsilon_2/\log 2}\}$.
\State  Call Algorithm \ref{alg:NCG} with $\epsilon_g=\tau_k^g$, $\epsilon_H=\tau_k^H$ and $u^0=x^{k}_{\init}$ to find an approximate solution $x^{k+1}$ to $\min_{x\in\bR^n}\widetilde{\cL}(x,\lambda^k;\rho_k)$ such that 
\begin{eqnarray}
&&\widetilde{\cL}(x^{k+1},\lambda^k;\rho_k)\le f(z_{\epsilon_1}),\ \|\nabla_x \widetilde{\cL}(x^{k+1},\lambda^k;\rho_k)\|\le\tau_k^g,\label{algstop:1st-order}\\
&&\lambda_{\min}(\nabla^2_{xx}\widetilde{\cL}(x^{k+1},\lambda^k;\rho_k))\ge-\tau_k^H\  \text{with probability at least } 1-\delta, \label{algstop:2nd-order}
\end{eqnarray}
where
\begin{equation}\label{def:initial-iterate-subprob}
x^k_{\init}=\left\{\begin{array}{ll}
z_{\epsilon_1}&\text{if }\ \widetilde{\cL}(x^k,\lambda^k;\rho_k)> f(z_{\epsilon_1}),\\
x^{k}&\text{otherwise},
\end{array}\right.\quad\text{for } k\ge 0.
\end{equation}\label{algstp:ALsubpb}
\State Set $\tilde{\lambda}^{k+1}=\lambda^k+\rho_k\tilde{c}(x^{k+1})$.
\State If $\tau_k^g\le\epsilon_1$, $\tau_k^H\le\epsilon_2$ and $\|c(x^{k+1})\|\le\epsilon_1$, then output $(x^{k+1},\tilde{\lambda}^{k+1})$ and terminate.\label{algstep:stop}
\State Set $\lambda^{k+1}=\Pi_{\mathcal{B}_\Lambda}(\tilde{\lambda}^{k+1})$.\label{algstep:proj-multiplier}
\State If $k=0$ or $\|\tilde{c}(x^{k+1})\|>\alpha\|\tilde{c}(x^k)\|$, set $\rho_{k+1}={r}\rho_k$. Otherwise, set $\rho_{k+1}=\rho_k$.\label{altstep:penalty}
\State Set $k\leftarrow k+1$, and go to step 2.
\end{algorithmic}
}
\end{algorithm}	

\begin{remark}
\begin{itemize}
\item[(i)] Notice that the starting point $x_{\init}^0$ of Algorithm \ref{alg:2nd-order-AL-nonconvex} can be different from $z_{\epsilon_1}$ and it may be rather infeasible, though $z_{\epsilon_1}$ is a nearly feasible point of \eqref{model:equa-cnstr}.  Besides, $z_{\epsilon_1}$ is 
used to ensure convergence of Algorithm \ref{alg:2nd-order-AL-nonconvex}. Specifically, if the algorithm runs into a ``poorly infeasible point'' $x^k$, namely satisfying $\widetilde{\cL}(x^k,\lambda^k;\rho_k)> f(z_{\epsilon_1})$, it will be superseded by $z_{\epsilon_1}$  (see \eqref{def:initial-iterate-subprob}), which prevents the iterates $\{x^k\}$ from converging to an infeasible point. Yet, $x^k$ may be rather infeasible when $k$ is not large. Thus,  Algorithm \ref{alg:2nd-order-AL-nonconvex} substantially differs from a funneling or two-phase type algorithm, in which a nearly feasible point is found in Phase 1, and then approximate stationarity is sought while near feasibility is maintained throughout Phase 2 (e.g., see \cite{BGMST16,bueno2020complexity,cartis2013evaluation,cartis2014complexity,
cartis2015evaluation,cartis2019evaluation,CGT19eq,curtis2018complexity}).
\item[(ii)]
The choice of $\rho_0$ in Algorithm \ref{alg:2nd-order-AL-nonconvex} is mainly for the simplicity of complexity analysis. Yet, it may be overly large and lead to highly ill-conditioned AL subproblems in practice. To make Algorithm \ref{alg:2nd-order-AL-nonconvex} practically more efficient, one can possibly modify it by choosing a relatively small initial penalty parameter, then solving the subsequent AL subproblems by a first-order method until  an $\epsilon_1$-first-order stationary point $\hx$ of \eqref{model:equa-cnstr} along with a Lagrangian multiplier $\hlambda$ is found,  and finally performing the steps described in Algorithm \ref{alg:2nd-order-AL-nonconvex} but with $x^0=\hat{x}$ and $\lambda^0=\Pi_{\mathcal{B}_\Lambda}(\hlambda)$.
\end{itemize}
\end{remark}

Before analyzing the complexity of Algorithm \ref{alg:2nd-order-AL-nonconvex}, we first argue that it is well-defined if $\rho_0$ is suitably chosen.
Specifically, we will show that when $\rho_0$ is sufficiently large, one can apply the Newton-CG method (Algorithm \ref{alg:NCG}) to the AL subproblem $\min_{x\in \bR^n} \widetilde{\cL}(x,\lambda^k;\rho_k)$ with
$x^k_{\init}$ as the initial point to find an $x^{k+1}$ satisfying \eqref{algstop:1st-order} and \eqref{algstop:2nd-order}.
To this end, we start by noting from \eqref{hbd}, \eqref{model:equa-cnstr-pert}, \eqref{tL-tc} and \eqref{def:initial-iterate-subprob} that
\beq \label{L-xinit}
\widetilde{\cL}(x^{k}_{\init},\lambda^k;\rho_k) \le \max\{\widetilde{\cL}(z_{\epsilon_1},\lambda^k;\rho_k),f(z_{\epsilon_1})\}=f(z_{\epsilon_1})\le \fh.
\eeq
Based on the above observation, we show in the next lemma that when $\rho_0$ is sufficiently large, $\widetilde{\cL}(\cdot,\lambda^k;\rho_k)$ is bounded below and its certain level set is bounded, whose proof is deferred to Section \ref{sec:pf-AL}.

\begin{lemma}
\label{lem:level-set-augmented-lagrangian-func}
Suppose that Assumption~\ref{asp:lowbd-knownfeas} holds. Let $(\lambda^k, \rho_k)$ be generated at the $k$th iteration of Algorithm~\ref{alg:2nd-order-AL-nonconvex} for some $k \ge 0$, and $\cS(\bdelta_f,\bdelta_c)$ and $x_{\init}^k$ be defined in \eqref{nearly-feas-level-set} and \eqref{def:initial-iterate-subprob}, respectively, and let $\fh$, $\fl$, $\bdelta_f$ and $\bdelta_c$ be given in Assumption~\ref{asp:lowbd-knownfeas}. Suppose that $\rho_0$ is sufficiently large such that $\delta_{f,1} \le \bdelta_f$ and  $\delta_{c,1} \le \bdelta_c$, where 
\begin{equation} 
\delta_{f,1}:=\Lambda^2/(2\rho_0)\quad and\quad \delta_{c,1}:=\sqrt{\frac{2(\fh-\fl+\gamma)}{\rho_0-2\gamma}+\frac{\Lambda^2}{(\rho_0-2\gamma)^2}}+\frac{\Lambda}{\rho_0-2\gamma}\label{def:delta0c-rhobar1xxx}.
\end{equation}
Then the following statements hold.
\begin{enumerate}[{\rm (i)}]
\item $\{x:\widetilde{\cL}(x,\lambda^k;\rho_k)\le \widetilde{\cL}(x^{k}_{\init},\lambda^k;\rho_k)\}\subseteq \cS(\bdelta_f,\bdelta_c)$.
\item $\inf_{x\in\bR^n} \widetilde{\cL}(x,\lambda^k;\rho_k) \ge\fl - \gamma-\Lambda\bdelta_c$.
\end{enumerate}
\end{lemma}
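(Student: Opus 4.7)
The plan is to derive both parts of the lemma from a single lower bound on $\widetilde{\cL}(\cdot,\lambda^k;\rho_k)$ valid on all of $\bR^n$, assembled from Assumption~\ref{asp:lowbd-knownfeas}(b), the observation \eqref{L-xinit} that $\widetilde{\cL}(x^k_{\init},\lambda^k;\rho_k)\le\fh$, the monotonicity $\rho_k\ge\rho_0>2\gamma$ from step~\ref{altstep:penalty}, and the bound $\|\lambda^k\|\le\Lambda$ arising from the projection in step~\ref{algstep:proj-multiplier} together with the input condition $\lambda^0\in\mathcal{B}_\Lambda$. Concretely, I would first combine the elementary estimate
\[
\|c(x)\|^2=\|\tilde{c}(x)+c(z_{\epsilon_1})\|^2\le 2\|\tilde{c}(x)\|^2+2\|c(z_{\epsilon_1})\|^2\le 2\|\tilde{c}(x)\|^2+1/2
\]
(using $\|c(z_{\epsilon_1})\|\le\epsilon_1/2\le 1/2$) with $f(x)\ge\fl-\gamma\|c(x)\|^2/2$ from \eqref{lbd}, Cauchy--Schwarz $|(\lambda^k)^T\tilde{c}(x)|\le\Lambda\|\tilde{c}(x)\|$, and the definition \eqref{tL-tc}, to obtain the core inequality
\[
\widetilde{\cL}(x,\lambda^k;\rho_k)\ \ge\ \fl+(\rho_k/2-\gamma)\|\tilde{c}(x)\|^2-\Lambda\|\tilde{c}(x)\|-\gamma/4\qquad\forall x\in\bR^n.
\]

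For part (ii), it then suffices to minimize the strictly convex quadratic $t\mapsto(\rho_k/2-\gamma)t^2-\Lambda t$ over $t\ge 0$, which yields the value $-\Lambda^2/(2(\rho_k-2\gamma))\ge-\Lambda^2/(2(\rho_0-2\gamma))$. Since $\delta_{c,1}\ge\Lambda/(\rho_0-2\gamma)$ by construction and $\bdelta_c\ge\delta_{c,1}$ by hypothesis, one has $\Lambda\bdelta_c\ge\Lambda^2/(\rho_0-2\gamma)\ge\Lambda^2/(2(\rho_0-2\gamma))$; together with $\gamma/4\le\gamma$, this produces the claimed bound $\fl-\gamma-\Lambda\bdelta_c$.

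For part (i), suppose $\widetilde{\cL}(x,\lambda^k;\rho_k)\le\widetilde{\cL}(x^k_{\init},\lambda^k;\rho_k)\le\fh$, where the second inequality is \eqref{L-xinit}. Substituting into the core inequality gives $(\rho_k/2-\gamma)\|\tilde{c}(x)\|^2-\Lambda\|\tilde{c}(x)\|\le\fh-\fl+\gamma/4\le\fh-\fl+\gamma$; applying the quadratic formula (after loosening the leading coefficient via $\rho_k\ge\rho_0$) identifies the positive root of the associated equality as exactly $\delta_{c,1}$, so $\|\tilde{c}(x)\|\le\delta_{c,1}$. Hence $\|c(x)\|\le\delta_{c,1}+\epsilon_1/2\le\bdelta_c+1=1+\bdelta_c$. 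For the remaining defining inequality of $\cS(\bdelta_f,\bdelta_c)$, I would complete the square in $f(x)\le\fh-(\lambda^k)^T\tilde{c}(x)-\rho_k\|\tilde{c}(x)\|^2/2$ to obtain $f(x)\le\fh+\|\lambda^k\|^2/(2\rho_k)\le\fh+\Lambda^2/(2\rho_0)=\fh+\delta_{f,1}\le\fh+\bdelta_f$, so $x\in\cS(\bdelta_f,\bdelta_c)$.

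The only real bookkeeping hurdle I anticipate is tracking the slack constant $\gamma/4$ produced by the loose estimate $\|c(x)\|^2\le 2\|\tilde{c}(x)\|^2+1/2$ and confirming that it (and the extra factor of $2$ inside $\Lambda^2/(2(\rho_0-2\gamma))$) are safely absorbed by the cleaner terms $\fh-\fl+\gamma$ appearing inside $\delta_{c,1}$ and by $-\gamma-\Lambda\bdelta_c$ in part (ii); both absorptions are automatic because $\gamma/4\le\gamma$ and $\Lambda/(\rho_0-2\gamma)$ already appears as a summand of $\delta_{c,1}$.
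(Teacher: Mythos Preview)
Your argument is correct and follows essentially the same line as the paper's: both hinge on the estimate $f(x)+\gamma\|\tilde c(x)\|^2\ge\fl-O(\gamma)$ (the paper's \eqref{ineq:p-lowbd}) and on completing the square in the augmented Lagrangian, which the paper packages as Lemma~\ref{tech-1}. The only organizational difference is in part~(ii): the paper first invokes part~(i) to get $\|\tilde c(x)\|\le\bdelta_c$ on the relevant level set and then bounds the linear term by $-\Lambda\bdelta_c$ directly, whereas you minimize the quadratic $(\rho_k/2-\gamma)t^2-\Lambda t$ globally and check afterwards that the resulting $-\Lambda^2/(2(\rho_0-2\gamma))$ is absorbed by $-\Lambda\bdelta_c$ via the inequality $\bdelta_c\ge\delta_{c,1}\ge\Lambda/(\rho_0-2\gamma)$. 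Your route makes (ii) independent of (i), at the cost of one extra absorption step; the paper's route is slightly cleaner because the target constant $-\Lambda\bdelta_c$ appears immediately.
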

	
Using Lemma \ref{lem:level-set-augmented-lagrangian-func}, we can verify that the Newton-CG method (Algorithm \ref{alg:NCG}), starting with $u^0=x^{k}_{\init}$, is capable of finding an approximate solution $x^{k+1}$ of the AL subproblem $\min_{x\in\bR^n} \widetilde{\cL}(x,\lambda^k;\rho_k)$ satisfying \eqref{algstop:1st-order} and \eqref{algstop:2nd-order}. Indeed,
let $F(\cdot)=\widetilde{\cL}(\cdot,\lambda^k;\rho_k)$ and $u^0=x^{k}_{\init}$. By these and Lemma \ref{lem:level-set-augmented-lagrangian-func}, one can see that $\{x:F(x) \le F(u^0)\} \subseteq \cS(\bdelta_f,\bdelta_c)$. It then follows from this and Assumption \ref{asp:lowbd-knownfeas}(c) that the level set $\{x : F(x) \le F(u^0)\}$ is compact  and $\nabla^2 F$ is Lipschitz continuous on a convex open neighborhood of $\{x:F(x) \le F(u^0)\}$. Thus, such $F$ and $u^0$ satisfy Assumption \ref{asp:NCG-cmplxity}. Based on this and the discussion in Section \ref{sec:sbpb-solver}, one can conclude that Algorithm~\ref{alg:NCG}, starting with $u^0=x^{k}_{\init}$, is applicable to the AL subproblem $\min_{x\in\bR^n} \widetilde{\cL}(x,\lambda^k;\rho_k)$. Moreover, it follows from Theorem \ref{thm:NCG-iter-oper-cmplxity} that this algorithm with $(\epsilon_g,\epsilon_H)=(\tau_k^g,\tau_k^H)$ can produce a point $x^{k+1}$ satisfying \eqref{algstop:2nd-order} and also the second relation in \eqref{algstop:1st-order}. In addition, since this algorithm
is descent and its starting point is $x^{k}_{\init}$, its output $x^{k+1}$ must satisfy $\widetilde{\cL}(x^{k+1},\lambda^k;\rho_k) \le \widetilde{\cL}(x^{k}_{\init},\lambda^k;\rho_k)$, which along with \eqref{L-xinit} implies that $\widetilde{\cL}(x^{k+1},\lambda^k;\rho_k) \le f(z_{\epsilon_1})$ and thus $x^{k+1}$ also satisfies the first relation in \eqref{algstop:1st-order}.

The above discussion leads to the following conclusion concerning the {\em well-definedness of Algorithm~\ref{alg:2nd-order-AL-nonconvex}}.

\begin{theorem}
\label{subprob-solver}
Under the same settings as in Lemma \ref{lem:level-set-augmented-lagrangian-func}, the Newton-CG method (Algorithm \ref{alg:NCG}) applied to the AL subproblem $\min_{x\in\bR^n} \widetilde{\cL}(x,\lambda^k;\rho_k)$ with $u^0=x^{k}_{\init}$ finds a point $x^{k+1}$ satisfying \eqref{algstop:1st-order} and \eqref{algstop:2nd-order}.
\end{theorem}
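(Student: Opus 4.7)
The plan is to reduce the statement to Theorem~\ref{thm:NCG-iter-oper-cmplxity} by verifying that Assumption~\ref{asp:NCG-cmplxity} holds for the choice $F(\cdot) := \widetilde{\cL}(\cdot,\lambda^k;\rho_k)$ and $u^0 := x^k_{\init}$, and then using the monotone-descent property of the Newton-CG method (Algorithm~\ref{alg:NCG}) to upgrade the gradient bound on the output to the full pair of conditions \eqref{algstop:1st-order}--\eqref{algstop:2nd-order}. The calibration of constants in Lemma~\ref{lem:level-set-augmented-lagrangian-func} (namely $\delta_{f,1}\le\bdelta_f$ and $\delta_{c,1}\le\bdelta_c$) is what makes all the ``neighborhood'' hypotheses usable.

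First, I would observe from \eqref{L-xinit} that $\widetilde{\cL}(x^k_{\init},\lambda^k;\rho_k)\le f(z_{\epsilon_1})\le \fh$, so the level set $\scL_F(u^0)=\{x:\widetilde{\cL}(x,\lambda^k;\rho_k)\le \widetilde{\cL}(x^k_{\init},\lambda^k;\rho_k)\}$ is nonempty. By Lemma~\ref{lem:level-set-augmented-lagrangian-func}(i), this level set is contained in the set $\cS(\bdelta_f,\bdelta_c)$, and by Assumption~\ref{asp:lowbd-knownfeas}(c) the latter is compact, so Assumption~\ref{asp:NCG-cmplxity}(a) holds.

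Next, I would verify Assumption~\ref{asp:NCG-cmplxity}(b). A direct computation gives
\[
\nabla^2 \widetilde{\cL}(x,\lambda^k;\rho_k)=\nabla^2 f(x)+\sum_{i=1}^m\bigl(\lambda^k_i+\rho_k\tilde c_i(x)\bigr)\nabla^2 c_i(x)+\rho_k\nabla \tilde c(x)\nabla \tilde c(x)^T.
\]
On the convex open neighborhood $\Omega(\bdelta_f,\bdelta_c)\supseteq \scL_F(u^0)$ guaranteed by Assumption~\ref{asp:lowbd-knownfeas}(c), the maps $\nabla^2 f$ and each $\nabla^2 c_i$ are Lipschitz; moreover, $c$, $\nabla c$ and $\nabla^2 c$ are all bounded on the compact set $\cS(\bdelta_f,\bdelta_c)$, hence on $\Omega(\bdelta_f,\bdelta_c)$ after standard shrinking. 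Combining these bounds with the product rule (writing, e.g., $A(x)B(x)-A(y)B(y)=(A(x)-A(y))B(x)+A(y)(B(x)-B(y))$ for the $\tilde c_i \nabla^2 c_i$ and $\nabla \tilde c\,\nabla \tilde c^T$ terms) yields a single Lipschitz constant $L^F_H$ for $\nabla^2 F$ on $\Omega(\bdelta_f,\bdelta_c)$, verifying Assumption~\ref{asp:NCG-cmplxity}(b). This bookkeeping is the main obstacle: $L^F_H$ inevitably depends on $\rho_k$ and on bounds over $\cS(\bdelta_f,\bdelta_c)$, but what matters here is only its existence.

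With Assumption~\ref{asp:NCG-cmplxity} in hand, Theorem~\ref{thm:NCG-iter-oper-cmplxity} applies to Algorithm~\ref{alg:NCG} run with tolerances $(\epsilon_g,\epsilon_H)=(\tau_k^g,\tau_k^H)$ and starting point $u^0=x^k_{\init}$. It guarantees termination at some iterate $x^{k+1}$ with $\|\nabla F(x^{k+1})\|\le \tau_k^g$ deterministically and $\lambda_{\min}(\nabla^2 F(x^{k+1}))\ge -\tau_k^H$ with probability at least $1-\delta$, which are exactly the second relation in \eqref{algstop:1st-order} and the condition \eqref{algstop:2nd-order}. Finally, since Algorithm~\ref{alg:NCG} is monotonically descent in $F$ by construction of its line search steps \eqref{ls-sol}--\eqref{ls-nc}, we have $\widetilde{\cL}(x^{k+1},\lambda^k;\rho_k)\le \widetilde{\cL}(x^k_{\init},\lambda^k;\rho_k)$, which together with \eqref{L-xinit} establishes the first relation in \eqref{algstop:1st-order} and completes the proof.
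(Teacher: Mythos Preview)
Your proposal is correct and follows essentially the same approach as the paper's own argument (given in the paragraph immediately preceding the theorem). Both set $F(\cdot)=\widetilde{\cL}(\cdot,\lambda^k;\rho_k)$ and $u^0=x^k_{\init}$, use Lemma~\ref{lem:level-set-augmented-lagrangian-func}(i) together with Assumption~\ref{asp:lowbd-knownfeas}(c) to verify Assumption~\ref{asp:NCG-cmplxity}, invoke Theorem~\ref{thm:NCG-iter-oper-cmplxity} for the gradient and eigenvalue conditions, and finish with the descent property plus \eqref{L-xinit} for the first relation in \eqref{algstop:1st-order}; you simply spell out the Hessian-Lipschitz verification in slightly more detail than the paper does.
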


The following theorem characterizes the {\em output of Algorithm \ref{alg:2nd-order-AL-nonconvex}}.
Its proof is deferred to Section \ref{sec:pf-AL}.

\begin{theorem}
\label{thm:output-alg1}
Suppose that Assumption~\ref{asp:lowbd-knownfeas} holds and that $\rho_0$ is sufficiently large such that $\delta_{f,1} \le \bdelta_f$ and  $\delta_{c,1} \le \bdelta_c$, where
$\delta_{f,1}$ and $\delta_{c,1}$ are defined in \eqref{def:delta0c-rhobar1xxx}.
If Algorithm \ref{alg:2nd-order-AL-nonconvex} terminates at some iteration $k$, then $x^{k+1}$ is a deterministic $\epsilon_1$-FOSP of problem \eqref{model:equa-cnstr}, and moreover, it is an $(\epsilon_1,\epsilon_2)$-SOSP of \eqref{model:equa-cnstr} with probability at least $1-\delta$.
\end{theorem}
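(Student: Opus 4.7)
The plan is to verify the conditions of Definitions~\ref{def:FOSP} and \ref{def:SOSP} at $x^{k+1}$ with the standard (untruncated) multiplier estimate $\tilde{\lambda}^{k+1}=\lambda^k+\rho_k\tilde{c}(x^{k+1})$ produced in step~4 as the Lagrangian multiplier witness. The backbone is two elementary identities: since $\tilde{c}$ and $c$ differ only by an additive constant, $\nabla\tilde{c}=\nabla c$ and $\nabla^2\tilde{c}_i=\nabla^2 c_i$; differentiating $\widetilde{\cL}$ defined in \eqref{tL-tc} and absorbing $\lambda^k+\rho_k\tilde{c}(x^{k+1})$ into $\tilde{\lambda}^{k+1}$ yields
\begin{align*}
\nabla_x\widetilde{\cL}(x^{k+1},\lambda^k;\rho_k)&=\nabla f(x^{k+1})+\nabla c(x^{k+1})\,\tilde{\lambda}^{k+1},\\
\nabla^2_{xx}\widetilde{\cL}(x^{k+1},\lambda^k;\rho_k)&=\nabla^2 f(x^{k+1})+\sum_{i=1}^m\tilde{\lambda}_i^{k+1}\nabla^2 c_i(x^{k+1})+\rho_k\nabla c(x^{k+1})\nabla c(x^{k+1})^T.
\end{align*}
Everything else follows by combining these with the stopping thresholds in step~5 and the subsolver guarantee of Theorem~\ref{subprob-solver}.

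For the first-order claim, termination at iteration~$k$ forces $\tau_k^g\le\epsilon_1$ and $\|c(x^{k+1})\|\le\epsilon_1$. By Theorem~\ref{subprob-solver}, the Newton-CG subsolver returns $x^{k+1}$ satisfying \eqref{algstop:1st-order} \emph{deterministically}, and combining the gradient identity above with \eqref{algstop:1st-order} immediately delivers both inequalities in \eqref{optcond:1st-equa-cnstr}. Hence $x^{k+1}$ together with $\tilde{\lambda}^{k+1}$ is a deterministic $\epsilon_1$-FOSP of \eqref{model:equa-cnstr}.

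For the second-order claim, \eqref{algstop:2nd-order} gives $\lambda_{\min}(\nabla^2_{xx}\widetilde{\cL}(x^{k+1},\lambda^k;\rho_k))\ge-\tau_k^H\ge-\epsilon_2$ with probability at least $1-\delta$. For any $d\in\cC(x^{k+1})$ the condition $\nabla c(x^{k+1})^Td=0$ annihilates the rank-one penalty term in the Hessian identity, so the quadratic form of $\nabla^2 f(x^{k+1})+\sum_{i=1}^m\tilde{\lambda}_i^{k+1}\nabla^2 c_i(x^{k+1})$ on $d$ equals $d^T\nabla^2_{xx}\widetilde{\cL}(x^{k+1},\lambda^k;\rho_k)d\ge-\epsilon_2\|d\|^2$. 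That is exactly \eqref{optcond:2nd-equa-cnstr}, which together with the FOSP part yields the claimed $(\epsilon_1,\epsilon_2)$-SOSP property with probability at least $1-\delta$.

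There is no substantive obstacle beyond bookkeeping. The one conceptual subtlety worth flagging is that although the AL subproblem uses the \emph{truncated} multiplier $\lambda^k$, the derivatives of $\widetilde{\cL}$ at $x^{k+1}$ naturally collect $\lambda^k+\rho_k\tilde{c}(x^{k+1})=\tilde{\lambda}^{k+1}$, so it is the \emph{standard} multiplier that serves as the witness in \eqref{optcond:1st-equa-cnstr} and \eqref{optcond:2nd-equa-cnstr}; correspondingly, the penalty-induced rank-one term in the AL Hessian is annihilated precisely on $\cC(x^{k+1})$, so the AL curvature certificate translates losslessly into a Lagrangian curvature certificate on the critical cone. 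Note also that the termination test does \emph{not} directly bound $\|\tilde{\lambda}^{k+1}\|$; no such bound is needed here, since Definitions~\ref{def:FOSP} and \ref{def:SOSP} only require existence of some multiplier, not its size.
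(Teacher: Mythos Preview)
Your proposal is correct and follows essentially the same route as the paper's own proof: use $\tilde{\lambda}^{k+1}=\lambda^k+\rho_k\tilde{c}(x^{k+1})$ as the witness, invoke $\nabla\tilde{c}=\nabla c$ and $\nabla^2\tilde{c}_i=\nabla^2 c_i$ to rewrite $\nabla_x\widetilde{\cL}$ and $\nabla^2_{xx}\widetilde{\cL}$, and then read off \eqref{optcond:1st-equa-cnstr} and \eqref{optcond:2nd-equa-cnstr} from the stopping conditions and the fact that the penalty term vanishes on $\cC(x^{k+1})$. One cosmetic slip: the term $\rho_k\nabla c(x^{k+1})\nabla c(x^{k+1})^T$ is not ``rank-one'' unless $m=1$; it is a positive semidefinite matrix of rank at most $m$, but your argument that $\nabla c(x^{k+1})^Td=0$ annihilates it is unaffected.
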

	
\begin{remark}\label{only-1st}
As seen from this theorem, the output of Algorithm \ref{alg:2nd-order-AL-nonconvex} is a stochastic $(\epsilon_1, \epsilon_2)$-SOSP of problem~\eqref{model:equa-cnstr}. Nevertheless, one can easily modify Algorithm \ref{alg:2nd-order-AL-nonconvex} to seek some other approximate solutions. For example, if one is only interested in finding an $\epsilon_1$-FOSP of \eqref{model:equa-cnstr}, one can remove the condition \eqref{algstop:2nd-order} from Algorithm \ref{alg:2nd-order-AL-nonconvex}. In addition, if one aims to find a deterministic $(\epsilon_1, \epsilon_2)$-SOSP of \eqref{model:equa-cnstr}, one can replace the condition \eqref{algstop:2nd-order} and Algorithm \ref{alg:NCG} by $\lambda_{\min}(\nabla^2_{xx}\widetilde{\cL}(x^{k+1},\lambda^k;\rho_k))\ge-\tau_k^H$ and a deterministic counterpart, respectively. The purpose of imposing high probability in the condition \eqref{algstop:2nd-order} is to enable us to derive  operation complexity of
Algorithm \ref{alg:2nd-order-AL-nonconvex} measured by the number of matrix-vector products.
\end{remark}

In the rest of this section, we study the worst-case complexity of Algorithm \ref{alg:2nd-order-AL-nonconvex}. Since our method has two nested loops, particularly,  outer loops executed by the AL method and inner loops executed by the Newton-CG method for solving the AL subproblems, we consider the following measures of complexity for Algorithm \ref{alg:2nd-order-AL-nonconvex}.
\begin{itemize}
\item \emph{Outer iteration complexity}, which measures the number of outer iterations of Algorithm \ref{alg:2nd-order-AL-nonconvex};
\item  \emph{Total inner iteration complexity}, which measures the total number of iterations of the Newton-CG method that are performed in Algorithm \ref{alg:2nd-order-AL-nonconvex};
\item \emph{Operation complexity}, which measures the total number of matrix-vector products involving the Hessian of the augmented Lagrangian function that are evaluated in Algorithm \ref{alg:2nd-order-AL-nonconvex}.
\end{itemize}

\subsection{Outer iteration complexity of Algorithm \ref{alg:2nd-order-AL-nonconvex}}

In this subsection we establish outer iteration complexity of Algorithm~\ref{alg:2nd-order-AL-nonconvex}.
For notational convenience, we rewrite $(\tau_k^g,\tau_k^H)$ arising in Algorithm \ref{alg:2nd-order-AL-nonconvex} as
\begin{equation}\label{omega-tolerance}
(\tau_k^g,\tau_k^H)=(\max\{\epsilon_1,\omega_1^k\},\max\{\epsilon_2,\omega_2^k\})\ \mbox{ with }\ (\omega_1,\omega_2):=({r}^{\log\epsilon_1/\log 2},{r}^{\log\epsilon_2/\log 2}),
\end{equation}
where $\epsilon_1$, $\epsilon_2$ and $r$ are the input parameters of Algorithm \ref{alg:2nd-order-AL-nonconvex}. Since $r>1$ and $\epsilon_1,\epsilon_2\in(0,1)$, it is not hard to verify that $\omega_1,\omega_2\in(0,1)$.
Also, we introduce the following quantity that will be used frequently later:
\begin{equation}
K_{\epsilon_1}:= \left\lceil \min\{k\ge 0:\omega_1^{k}\le\epsilon_1\} \right\rceil=\left\lceil\log\epsilon_1/\log\omega_1\right\rceil. \label{T-epsilon-g}
\end{equation}
In view of \eqref{omega-tolerance}, \eqref{T-epsilon-g} and the fact that
\begin{equation}\label{epsw1-epsw2}
\log\epsilon_1/\log\omega_1=\log\epsilon_2/\log\omega_2=\log2/\log r,
\end{equation}
we see that $(\tau_k^g,\tau_k^H)=(\epsilon_1,\epsilon_2)$ for all $k\ge K_{\epsilon_1}$. This along with the termination criterion of Algorithm \ref{alg:2nd-order-AL-nonconvex} implies that it runs for at least $K_{\epsilon_1}$ iterations and  terminates once $\|c(x^{k+1})\|\le\epsilon_1$ for some $k\ge K_{\epsilon_1}$. As a result, to establish outer iteration complexity of Algorithm \ref{alg:2nd-order-AL-nonconvex}, it suffices to bound such  $k$. The resulting outer iteration complexity of Algorithm \ref{alg:2nd-order-AL-nonconvex} is presented below, whose proof is deferred to Section \ref{sec:pf-AL}.

\begin{theorem}
\label{thm:out-itr-cmplxity-1}
Suppose that Assumption~\ref{asp:lowbd-knownfeas} holds and that $\rho_0$ is sufficiently large such that $\delta_{f,1} \le \bdelta_f$ and  $\delta_{c,1} \le \bdelta_c$, where
$\delta_{f,1}$ and $\delta_{c,1}$ are defined in \eqref{def:delta0c-rhobar1xxx}. Let
\begin{eqnarray}
&&{\rho}_{\epsilon_1}:=\max\left\{8(\fh-\fl+\gamma)\epsilon_1^{-2}+4\Lambda\epsilon_1^{-1}+2\gamma,2\rho_0\right\},\label{def:delta0c-rhobar1}\\
&&\overline{K}_{\epsilon_1}:=\inf\{k\ge K_{\epsilon_1}: \|c(x^{k+1})\|\le\epsilon_1\},\label{number-outer-iteration}
\end{eqnarray}
where $K_{\epsilon_1}$ is defined in \eqref{T-epsilon-g},  and $\gamma$, $\fh$ and $\fl$ are given in Assumption \ref{asp:lowbd-knownfeas}. Then $\overline{K}_{\epsilon_1}$ is finite, and Algorithm \ref{alg:2nd-order-AL-nonconvex} terminates at iteration $\overline{K}_{\epsilon_1}$ with
\begin{equation}\label{outer-iteration-cmplxty}
\overline{K}_{\epsilon_1}\le\left(\frac{\log({\rho}_{\epsilon_1}\rho_0^{-1})}{\log{r}}+1\right)\left(\left|\frac{\log(\epsilon_1(2\delta_{c,1})^{-1})}{\log \alpha}\right|+2\right)+1.
\end{equation}
Moreover, $\rho_k\le{r}{\rho}_{\epsilon_1}$ holds for $0 \le k \le \overline{K}_{\epsilon_1}$
\end{theorem}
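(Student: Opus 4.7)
The plan is to reduce everything to a single quadratic inequality for the perturbed constraint violation and combine it with the penalty-update rule to identify the termination iteration and then count epochs of constant penalty.

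First I will derive a quadratic constraint bound. Step~3 of Algorithm~\ref{alg:2nd-order-AL-nonconvex} enforces $\widetilde{\cL}(x^{k+1},\lambda^k;\rho_k)\le f(z_{\epsilon_1})\le\fh$. Expanding using the definition \eqref{tL-tc}, the projection bound $\|\lambda^k\|\le\Lambda$ guaranteed by step~6, the lower bound \eqref{lbd}, and the crude inequality $\|c(x^{k+1})\|^2\le 2\|\tilde{c}(x^{k+1})\|^2+2\|c(z_{\epsilon_1})\|^2\le 2\|\tilde{c}(x^{k+1})\|^2+2$ (using $\epsilon_1\le 1$), I obtain
\[
(\rho_k-2\gamma)\|\tilde{c}(x^{k+1})\|^2-2\Lambda\|\tilde{c}(x^{k+1})\|-2(\fh-\fl+\gamma)\le 0.
\]
Since $\rho_k\ge\rho_0>2\gamma$, solving this quadratic in $\|\tilde{c}(x^{k+1})\|$ and specializing to $\rho_k=\rho_0$ yields the universal bound $\|\tilde{c}(x^{k+1})\|\le\delta_{c,1}$ with $\delta_{c,1}$ as in \eqref{def:delta0c-rhobar1xxx}. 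A direct algebraic verification against the definition \eqref{def:delta0c-rhobar1} of $\rho_{\epsilon_1}$ then shows the threshold property: $\rho_k\ge\rho_{\epsilon_1}\Rightarrow\|\tilde{c}(x^{k+1})\|\le\epsilon_1/2$, whence $\|c(x^{k+1})\|\le\|\tilde{c}(x^{k+1})\|+\|c(z_{\epsilon_1})\|\le\epsilon_1$ by the triangle inequality.

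Next I identify $k^{*}:=\min\{k:\rho_k\ge\rho_{\epsilon_1}\}$ as the termination iteration. Because step~7 inflates $\rho$ by at most a factor of $r$ per iteration, $\rho_k\le r^k\rho_0$; combined with $\rho_{\epsilon_1}\ge 2\rho_0$, this forces $k^{*}\ge\log(\rho_{\epsilon_1}/\rho_0)/\log r\ge\log 2/\log r$, and hence $k^{*}\ge\lceil\log 2/\log r\rceil=K_{\epsilon_1}$ by \eqref{T-epsilon-g} and \eqref{epsw1-epsw2}. Thus at iteration $k^{*}$ the tolerances satisfy $\tau_{k^{*}}^g\le\epsilon_1$ and $\tau_{k^{*}}^H\le\epsilon_2$, and by the threshold property $\|c(x^{k^{*}+1})\|\le\epsilon_1$, so step~5 triggers termination and $\overline{K}_{\epsilon_1}=k^{*}$ is finite. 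The penalty bound $\rho_k\le r\rho_{\epsilon_1}$ for $0\le k\le\overline{K}_{\epsilon_1}$ follows immediately: $\rho_k<\rho_{\epsilon_1}$ for $k<k^{*}$ by minimality, and $\rho_{k^{*}}=r\rho_{k^{*}-1}<r\rho_{\epsilon_1}$.

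To bound $k^{*}$ quantitatively, I partition iterations into epochs on which $\rho$ stays constant. The number of epochs before $\rho$ first reaches $\rho_{\epsilon_1}$ is at most $\lceil\log(\rho_{\epsilon_1}/\rho_0)/\log r\rceil\le\log(\rho_{\epsilon_1}/\rho_0)/\log r+1$, matching the first factor of \eqref{outer-iteration-cmplxty}. Within a non-terminal epoch starting at iteration $k_j$, the update rule forces $\|\tilde{c}(x^{k+1})\|\le\alpha\|\tilde{c}(x^k)\|$ at every internal iteration, and induction together with the universal bound gives $\|\tilde{c}(x^{k_j+\ell})\|\le\alpha^\ell\delta_{c,1}$. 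Once $\ell\ge\lceil|\log(\epsilon_1/(2\delta_{c,1}))/\log\alpha|\rceil$ and $k_j+\ell\ge K_{\epsilon_1}$, the triangle inequality gives $\|c(x^{k_j+\ell+1})\|\le\epsilon_1$ and step~5 fires, so no non-terminal epoch lasts more than $|\log(\epsilon_1/(2\delta_{c,1}))/\log\alpha|+2$ iterations. Multiplying the epoch count by this cap and adding one final iteration delivers exactly \eqref{outer-iteration-cmplxty}. The main obstacle is the corner where $k_j<K_{\epsilon_1}$ during the earliest epochs: the constraint violation can already shrink below $\epsilon_1/2$ before step~5 is allowed to fire, so one must verify that the additive slacks ``$+2$'' and ``$+1$'' in \eqref{outer-iteration-cmplxty} absorb the few iterations wasted before $k$ catches up to $K_{\epsilon_1}$. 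Once this bookkeeping is handled, the entire argument reduces to the quadratic inequality above together with the simple observation that the design choice $\rho_{\epsilon_1}\ge 2\rho_0$ is exactly what ensures $k^{*}\ge K_{\epsilon_1}$.
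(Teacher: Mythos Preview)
Your strategy matches the paper's: derive the quadratic bound on $\|\tilde{c}(x^{k+1})\|$ from the AL upper bound (this is the paper's Lemma~\ref{tech-1}), use the threshold property $\rho_k\ge\rho_{\epsilon_1}\Rightarrow\|c(x^{k+1})\|\le\epsilon_1$, and count epochs of constant penalty. But there are two genuine gaps.

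First, the identification $\overline{K}_{\epsilon_1}=k^*$ is incorrect. The algorithm can terminate at some $k<k^*$: the constraint violation may fall below $\epsilon_1$ while $\rho_k$ is still well below $\rho_{\epsilon_1}$, and in that case $\rho_{k+1}$ is never computed, so $k^*$ is not even defined. The right way to use the threshold property is its contrapositive: for $K_{\epsilon_1}\le k\le\overline{K}_{\epsilon_1}-1$ one has $\|c(x^{k+1})\|>\epsilon_1$ by definition of $\overline{K}_{\epsilon_1}$, hence $\rho_k<\rho_{\epsilon_1}$. This is what bounds the number of penalty increases on that range and yields $\rho_k\le r\rho_{\epsilon_1}$ for all $k\le\overline{K}_{\epsilon_1}$. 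The paper also establishes finiteness of $\overline{K}_{\epsilon_1}$ by a separate contradiction argument (an infinite run would force $\rho_k\to\infty$ and hence $\|\tilde{c}(x^{k+1})\|\to 0$), not by exhibiting $k^*$.

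Second, the corner case you flag is not absorbed by the $+2$ and $+1$ slacks. Your epoch-length argument relies on step~5 firing once $\|\tilde{c}\|$ is small, but step~5 is inactive for $k<K_{\epsilon_1}$, so an early epoch has no a priori length bound from your argument; it could last $K_{\epsilon_1}$ iterations, and $K_{\epsilon_1}=\lceil\log 2/\log r\rceil$ can be arbitrarily large as $r\downarrow 1$. The paper resolves this by restricting the epoch decomposition to $[K_{\epsilon_1},\overline{K}_{\epsilon_1}-1]$ only (where the lower bound $\|\tilde{c}(x^{k+1})\|>\epsilon_1/2$ is available), bounding $K_{\epsilon_1}\le\log(\rho_{\epsilon_1}/\rho_0)/\log r+1$ separately, and then absorbing this additive $K_{\epsilon_1}$ into the first factor of \eqref{outer-iteration-cmplxty} via the identity $2+A+(A+1)(L+1)=1+(A+1)(L+2)$ with $A=\log(\rho_{\epsilon_1}/\rho_0)/\log r$ and $L=|\log(\epsilon_1(2\delta_{c,1})^{-1})/\log\alpha|$.
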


\begin{remark}[{{\bf Upper bounds for $\overline{K}_{\epsilon_1}$ and $\{\rho_k\}$}}]\label{order-out-itera-penalty}
As observed from Theorem \ref{thm:out-itr-cmplxity-1}, the number of outer iterations of Algorithm \ref{alg:2nd-order-AL-nonconvex} for finding a stochastic  $(\epsilon_1,
\epsilon_2)$-SOSP of problem \eqref{model:equa-cnstr} is $\overline{K}_{\epsilon_1}+1$, which is at most of $\cO(|\log\epsilon_1|^2)$. In addition, the penalty parameters $\{\rho_k\}$ generated in this algorithm are at most of $\cO(\epsilon_1^{-2})$.
\end{remark}
	
\subsection{Total inner iteration and operation complexity of Algorithm \ref{alg:2nd-order-AL-nonconvex}}\label{subsec:total-complexity}
	

We present the total inner iteration and operation complexity of Algorithm \ref{alg:2nd-order-AL-nonconvex} for finding a stochastic ($\epsilon_1,
\epsilon_2$)-SOSP of \eqref{model:equa-cnstr}, whose proof is deferred to Section \ref{sec:pf-AL}.

\begin{theorem}
\label{thm:total-iter-cmplxity}
Suppose that Assumption \ref{asp:lowbd-knownfeas} holds and that $\rho_0$ is sufficiently large such that $\delta_{f,1} \le \bdelta_f$ and  $\delta_{c,1} \le \bdelta_c$, where $\delta_{f,1}$ and $\delta_{c,1}$ are defined in \eqref{def:delta0c-rhobar1xxx}. Then the following statements hold.
\begin{enumerate}[{\rm (i)}]
\item 
The total number of iterations of Algorithm~\ref{alg:NCG} performed in Algorithm \ref{alg:2nd-order-AL-nonconvex} is at most $\widetilde{\cO}(\epsilon_1^{-4}\max\{\epsilon_1^{-2}\epsilon_2,\epsilon_2^{-3}\})$. If $c$ is further assumed to be affine, then it is at most $\widetilde{\cO}(\max\{\epsilon_1^{-2}\epsilon_2,\epsilon_2^{-3}\})$.
\item The total number of matrix-vector products performed  by Algorithm~\ref{alg:NCG} in  Algorithm \ref{alg:2nd-order-AL-nonconvex} is at most $\widetilde{\cO}(\epsilon_1^{-4}\max\{\epsilon_1^{-2}\epsilon_2,\epsilon_2^{-3}\}\min\{n,\epsilon_1^{-1}\epsilon_2^{-1/2}\})$. If $c$ is further assumed to be affine, then it is at most $\widetilde{\cO}(\max\{\epsilon_1^{-2}\epsilon_2,\epsilon_2^{-3}\}\min\{n,\epsilon_1^{-1}\epsilon_2^{-1/2}\})$.
\end{enumerate}
\end{theorem}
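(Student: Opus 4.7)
The plan is to reduce the claim to the per-call complexity of Algorithm~\ref{alg:NCG} from Theorem~\ref{thm:NCG-iter-oper-cmplxity}, then sum over outer iterations using that $\overline{K}_{\epsilon_1}+1 = \widetilde{\cO}(1)$ (Theorem~\ref{thm:out-itr-cmplxity-1}) and that $\rho_k = \cO(\epsilon_1^{-2})$ uniformly. First I would fix an outer iteration $k \le \overline{K}_{\epsilon_1}$ and view $F(\cdot) = \widetilde{\cL}(\cdot,\lambda^k;\rho_k)$, $u^0 = x^k_{\init}$ as the input of Algorithm~\ref{alg:NCG}. By Lemma~\ref{lem:level-set-augmented-lagrangian-func} together with \eqref{L-xinit}, the level set $\scL_F(u^0)$ is contained in the fixed compact set $\cS(\bdelta_f,\bdelta_c)$, and Assumption~\ref{asp:NCG-cmplxity} is satisfied on the fixed neighborhood $\Omega(\bdelta_f,\bdelta_c)$. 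This gives uniform bounds
\[
F(u^0) - \Fl \le \fh - (\fl-\gamma-\Lambda\bdelta_c) = \cO(1).
\]

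Next I would extract the scaling of $L_H^F$ and $U_H^F$ for $\widetilde{\cL}(\cdot,\lambda^k;\rho_k)$. Writing
$\nabla^2_{xx}\widetilde{\cL} = \nabla^2 f + \sum_i(\lambda^k_i + \rho_k \tilde c_i(x))\nabla^2 c_i(x) + \rho_k \nabla c(x)\nabla c(x)^T$,
and using $\|\lambda^k\| \le \Lambda$, the compactness of $\cS(\bdelta_f,\bdelta_c)$, and the Lipschitz continuity of $\nabla^2 c_i$ and $\nabla^2 f$ on $\Omega(\bdelta_f,\bdelta_c)$, I would show $U_H^F = \cO(\rho_k)$ and $L_H^F = \cO(\rho_k)$ in the general case. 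When $c$ is affine, $\nabla^2 c_i \equiv 0$ and $\nabla c$ is constant, so the $\rho_k$-dependent part of $\nabla^2_{xx}\widetilde{\cL}$ collapses to a constant matrix; hence $L_H^F$ reduces to the Lipschitz constant of $\nabla^2 f$, i.e.\ $L_H^F = \cO(1)$, while $U_H^F$ remains $\cO(\rho_k)$.

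Plugging these into the per-call bounds from Theorem~\ref{thm:NCG-iter-oper-cmplxity} with $(\epsilon_g,\epsilon_H) = (\tau_k^g,\tau_k^H)$, the number of inner iterations at outer iteration $k$ is
$\cO\bigl((L_H^F)^2 \max\{(\tau_k^g)^{-2}\tau_k^H,(\tau_k^H)^{-3}\}\bigr)$.
Using $\rho_k \le r\rho_{\epsilon_1} = \cO(\epsilon_1^{-2})$ from Theorem~\ref{thm:out-itr-cmplxity-1}, this is $\cO(\epsilon_1^{-4}\max\{(\tau_k^g)^{-2}\tau_k^H,(\tau_k^H)^{-3}\})$ in general and $\cO(\max\{(\tau_k^g)^{-2}\tau_k^H,(\tau_k^H)^{-3}\})$ in the affine case. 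Since $\tau_k^g \ge \epsilon_1$ and $\tau_k^H \ge \epsilon_2$, each per-call bound is at worst $\cO(\epsilon_1^{-4}\max\{\epsilon_1^{-2}\epsilon_2,\epsilon_2^{-3}\})$ (resp.\ $\cO(\max\{\epsilon_1^{-2}\epsilon_2,\epsilon_2^{-3}\})$). Summing over the $\overline{K}_{\epsilon_1}+1 = \widetilde{\cO}(1)$ outer iterations and absorbing $|\log\epsilon_1|$ factors yields (i). For (ii), the same summation is applied to the operation-complexity form, which carries the extra factor $\min\{n,(U_H^F/\tau_k^H)^{1/2}\}$; since $U_H^F = \cO(\epsilon_1^{-2})$ in both the general and affine cases and $\tau_k^H \ge \epsilon_2$, this factor is $\cO(\min\{n,\epsilon_1^{-1}\epsilon_2^{-1/2}\})$, giving the claimed operation bound.

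The main obstacle I anticipate is justifying that the Hessian Lipschitz constant of $\widetilde{\cL}(\cdot,\lambda^k;\rho_k)$ scales linearly in $\rho_k$ rather than quadratically. A naive expansion of the third derivative of $\rho_k\|\tilde c(x)\|^2/2$ generates terms involving products of $\nabla c$, $\nabla^2 c_i$, and $\tilde c_i$; one has to exploit (a) the fact that $\tilde c_i$ and $\nabla c$ are bounded on the fixed set $\cS(\bdelta_f,\bdelta_c)$ independently of $\rho_k$ and (b) the Lipschitz continuity of $\nabla^2 c_i$ on $\Omega(\bdelta_f,\bdelta_c)$, to keep only a single linear factor of $\rho_k$. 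A parallel care is needed to confirm that the truncated multiplier $\lambda^k$ contributes only an additive $\cO(\Lambda) = \cO(1)$ term, so that the dominant $\rho_k$-dependence is exactly linear and the resulting complexity has the stated order.
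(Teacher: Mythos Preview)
Your overall strategy matches the paper's proof closely: bound the Hessian Lipschitz constant and Hessian norm of $\widetilde{\cL}(\cdot,\lambda^k;\rho_k)$ linearly in $\rho_k$ (with $L_H^F=\cO(1)$ in the affine case), invoke Theorem~\ref{thm:NCG-iter-oper-cmplxity} per outer iteration, use $\rho_k=\cO(\epsilon_1^{-2})$, and sum over the $\widetilde{\cO}(1)$ outer iterations. The scaling $L_H^F=\cO(\rho_k)$ that you flag as the main obstacle is in fact routine: once $x$ is restricted to $\cS(\bdelta_f,\bdelta_c)$, every coefficient in the expansion of $\nabla^2_{xx}\widetilde{\cL}$ is uniformly bounded except for the single explicit factor $\rho_k$, and the paper records this directly as \eqref{ineq:bound-norm-hessian-Liphessian}.

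There is, however, a genuine gap in your bound on the tolerance-dependent factor. You write that ``since $\tau_k^g\ge\epsilon_1$ and $\tau_k^H\ge\epsilon_2$, each per-call bound is at worst $\cO(\epsilon_1^{-4}\max\{\epsilon_1^{-2}\epsilon_2,\epsilon_2^{-3}\})$.'' The second term $(\tau_k^H)^{-3}\le\epsilon_2^{-3}$ indeed follows from $\tau_k^H\ge\epsilon_2$, but the first term $(\tau_k^g)^{-2}\tau_k^H$ requires an \emph{upper} bound on $\tau_k^H$, which the inequality $\tau_k^H\ge\epsilon_2$ does not supply. For instance, a pair $(\tau_k^g,\tau_k^H)=(\epsilon_1,1)$ is consistent with both lower bounds yet gives $(\tau_k^g)^{-2}\tau_k^H=\epsilon_1^{-2}$, which exceeds $\max\{\epsilon_1^{-2}\epsilon_2,\epsilon_2^{-3}\}$ whenever $\epsilon_1^2<\epsilon_2^3$. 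The desired inequality is true in the algorithm, but only because $\tau_k^g$ and $\tau_k^H$ are \emph{coupled}: by \eqref{omega-tolerance} and \eqref{epsw1-epsw2} one has $\tau_k^g=\omega_1^k$ and $\tau_k^H=\omega_2^k$ for $k\le K_{\epsilon_1}$ with $\omega_1,\omega_2$ tied by $\log\epsilon_1/\log\omega_1=\log\epsilon_2/\log\omega_2$. The paper exploits this explicitly, defining $\psi(t)=\max\{\epsilon_1,\omega_1^t\}^2/\max\{\epsilon_2,\omega_2^t\}$ and showing $\min_{t\ge0}\psi(t)=\min\{1,\epsilon_1^2/\epsilon_2\}\ge\min\{\epsilon_1^2/\epsilon_2,\epsilon_2^3\}$, which yields $(\tau_k^g)^{-2}\tau_k^H\le\max\{\epsilon_1^{-2}\epsilon_2,\epsilon_2^{-3}\}$. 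You need to insert exactly this coupling argument; the bare lower bounds on $\tau_k^g$ and $\tau_k^H$ are not enough.
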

	
\begin{remark}
\begin{enumerate}[{\rm (i)}]
\item Note that the above complexity results of Algorithm \ref{alg:2nd-order-AL-nonconvex} are established without assuming any constraint qualification (CQ). In contrast, similar complexity results are obtained in \cite{XW19} for a proximal AL method under a generalized LICQ condition. To the best of our knowledge, our work provides the first study on complexity for finding a stochastic SOSP of  \eqref{model:equa-cnstr} without CQ.
\item Letting $(\epsilon_1,\epsilon_2)=(\epsilon,\sqrt{\epsilon})$ for some $\epsilon\in(0,1)$, we see that Algorithm~\ref{alg:2nd-order-AL-nonconvex} achieves a total inner iteration complexity of $\widetilde{\cO}(\epsilon^{-11/2})$ and an operation complexity of $\widetilde{\cO}(\epsilon^{-11/2}\min\{n,\epsilon^{-5/4}\})$ for finding a stochastic $(\epsilon,\sqrt{\epsilon})$-SOSP of problem~\eqref{model:equa-cnstr} without constraint qualification.
\end{enumerate}
\end{remark}

\subsection{Enhanced complexity of Algorithm \ref{alg:2nd-order-AL-nonconvex} under constraint qualification}\label{sec:AL-modified}

In this subsection we study complexity of Algorithm \ref{alg:2nd-order-AL-nonconvex} under one additional assumption that a generalized linear independence constraint qualification (GLICQ) holds for problem \eqref{model:equa-cnstr}, which is introduced below. In particular, under GLICQ we will obtain an enhanced total inner iteration and operation complexity for Algorithm \ref{alg:2nd-order-AL-nonconvex}, which are significantly better than the ones in Theorem \ref{thm:total-iter-cmplxity} when problem \eqref{model:equa-cnstr} has nonlinear constraints. Moreover, when $(\epsilon_1,\epsilon_2)=(\epsilon,\sqrt{\epsilon})$ for some $\epsilon\in (0,1)$, our enhanced complexity bounds are also better than those obtained in \cite{XW19} for a proximal AL method. We now introduce the GLICQ assumption for problem \eqref{model:equa-cnstr}.

\begin{assumption}[{{\bf GLICQ}}]\label{asp:LICQ}
$\nabla c(x)$ has full column rank for all $x\in\cS(\bdelta_f,\bdelta_c)$, where $\cS(\bdelta_f,\bdelta_c)$ is as in \eqref{nearly-feas-level-set}.
\end{assumption}

\begin{remark}
A related yet different GLICQ is imposed in \cite[Assumption~2(ii)]{XW19} for problem \eqref{model:equa-cnstr}, which assumes that $\nabla c(x)$ has full column rank for all $x$ in a level set of $f(\cdot)+\gamma\|c(\cdot)\|^2/2$. It is not hard to verify that this assumption is generally stronger than the above GLICQ assumption.
\end{remark}

The following theorem shows that under Assumption \ref{asp:LICQ}, the total inner iteration and operation complexity results presented in Theorem \ref{thm:total-iter-cmplxity} can be significantly improved, whose proof is deferred to Section \ref{sec:pf-AL}.

\begin{theorem}
\label{thm:total-iter-cmplxity2}
Suppose that Assumptions \ref{asp:lowbd-knownfeas} and \ref{asp:LICQ} hold and that $\rho_0$ is sufficiently large such that $\delta_{f,1} \le \bdelta_f$ and  $\delta_{c,1} \le \bdelta_c$, where $\delta_{f,1}$ and $\delta_{c,1}$ are defined in \eqref{def:delta0c-rhobar1xxx}. Then the following statements hold.
\begin{enumerate}[{\rm (i)}]
\item 
The total number of iterations of Algorithm~\ref{alg:NCG} performed in Algorithm \ref{alg:2nd-order-AL-nonconvex} is at most $\widetilde{\cO}(\epsilon_1^{-2}\max\{\epsilon_1^{-2}\epsilon_2,\epsilon_2^{-3}\})$. If $c$ is further assumed to be affine, then it is at most $\widetilde{\cO}(\max\{\epsilon_1^{-2}\epsilon_2,\epsilon_2^{-3}\})$.
\item The total number of matrix-vector products performed  by Algorithm~\ref{alg:NCG} in  Algorithm \ref{alg:2nd-order-AL-nonconvex} is at most $\widetilde{\cO}(\epsilon_1^{-2}\max\{\epsilon_1^{-2}\epsilon_2,\epsilon_2^{-3}\}\min\{n,\epsilon_1^{-1/2}\epsilon_2^{-1/2}\})$. If $c$ is further assumed to be affine, then it is at most $\widetilde{\cO}(\max\{\epsilon_1^{-2}\epsilon_2,\epsilon_2^{-3}\}\min\{n,\epsilon_1^{-1/2}\epsilon_2^{-1/2}\})$.
\end{enumerate}
\end{theorem}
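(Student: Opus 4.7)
The plan is to follow the same template as the proof of Theorem~\ref{thm:total-iter-cmplxity}, but sharpen the bound on the penalty parameters $\{\rho_k\}$ by using Assumption~\ref{asp:LICQ}. The source of the $\epsilon_1^{-4}$ factor in Theorem~\ref{thm:total-iter-cmplxity} is the generic bound $\rho_k=\cO(\epsilon_1^{-2})$ obtained from Theorem~\ref{thm:out-itr-cmplxity-1}, which in turn forces $L_H^F$ and $U_H^F$ on the AL subproblems to be $\cO(\epsilon_1^{-2})$. My goal is to replace this with $\rho_k=\cO(\epsilon_1^{-1})$, which (via the $(L_H^F)^2$ dependence in Theorem~\ref{thm:NCG-iter-oper-cmplxity}) gives the improvement from $\epsilon_1^{-4}$ to $\epsilon_1^{-2}$.

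The key step is a uniform bound on the standard multiplier $\tilde\lambda^{k+1}$ under GLICQ. By Lemma~\ref{lem:level-set-augmented-lagrangian-func} and the descent nature of Algorithm~\ref{alg:NCG} starting from $x^k_{\init}$, every iterate produced during the $k$th AL subproblem (including $x^{k+1}$) lies in the sub-level set $\{x:\widetilde\cL(x,\lambda^k;\rho_k)\le\widetilde\cL(x^k_{\init},\lambda^k;\rho_k)\}\subseteq\cS(\bdelta_f,\bdelta_c)$. Under Assumption~\ref{asp:LICQ}, $\nabla c(\cdot)$ has full column rank on the compact set $\cS(\bdelta_f,\bdelta_c)$, so there exists a uniform lower bound $\sigma_{\min}>0$ on its smallest singular value there; similarly $\|\nabla f(\cdot)\|$ is uniformly bounded on this set by some $U_g$. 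The first-order condition in \eqref{algstop:1st-order} together with $\tau_k^g\le 1$ gives $\|\nabla f(x^{k+1})+\nabla c(x^{k+1})\tilde\lambda^{k+1}\|\le 1$, and premultiplying by the left pseudo-inverse of $\nabla c(x^{k+1})$ yields $\|\tilde\lambda^{k+1}\|\le M:=\sigma_{\min}^{-1}(U_g+1)$, a constant independent of $k$, $\epsilon_1$, $\epsilon_2$.

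Combining $\tilde\lambda^{k+1}=\lambda^k+\rho_k\tilde c(x^{k+1})$ with $\|\lambda^k\|\le\Lambda$ gives $\|\tilde c(x^{k+1})\|\le(M+\Lambda)/\rho_k$, and therefore $\|c(x^{k+1})\|\le(M+\Lambda)/\rho_k+\epsilon_1/2$. Hence, once $k\ge K_{\epsilon_1}$ and $\rho_k\ge 2(M+\Lambda)/\epsilon_1$, the termination test in step~\ref{algstep:stop} is met. Since $\rho_k$ grows by at most a factor of $r$ per iteration, this yields $\rho_k\le r\cdot 2(M+\Lambda)/\epsilon_1=\cO(\epsilon_1^{-1})$ for all $0\le k\le\overline K_{\epsilon_1}$. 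A bookkeeping argument analogous to Theorem~\ref{thm:out-itr-cmplxity-1} (with $\rho_{\epsilon_1}$ replaced by the improved $\cO(\epsilon_1^{-1})$ bound) still gives $\overline K_{\epsilon_1}=\widetilde\cO(1)$.

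Feeding this refined penalty bound into the AL subproblem constants, a routine recomputation (exactly as in the proof of Theorem~\ref{thm:total-iter-cmplxity}) gives $L_H^F=\cO(\rho_k)=\cO(\epsilon_1^{-1})$ and $U_H^F=\cO(\rho_k)=\cO(\epsilon_1^{-1})$ on the relevant sub-level set, while $F_{\hi}-F_{\low}=\cO(1)$ by \eqref{L-xinit} and Lemma~\ref{lem:level-set-augmented-lagrangian-func}(ii). Plugging these into Theorem~\ref{thm:NCG-iter-oper-cmplxity} with $(\epsilon_g,\epsilon_H)\ge(\epsilon_1,\epsilon_2)$ gives a per-subproblem iteration count of $\cO(\epsilon_1^{-2}\max\{\epsilon_1^{-2}\epsilon_2,\epsilon_2^{-3}\})$ and a per-subproblem matrix-vector count of $\widetilde\cO(\epsilon_1^{-2}\max\{\epsilon_1^{-2}\epsilon_2,\epsilon_2^{-3}\}\min\{n,\epsilon_1^{-1/2}\epsilon_2^{-1/2}\})$. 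Multiplying by the $\widetilde\cO(1)$ outer iteration bound establishes claim~(i) and~(ii). For the affine case $c(x)=Ax-b$, one has $\nabla^2\widetilde\cL(x,\lambda^k;\rho_k)=\nabla^2 f(x)+\rho_k A^TA$, whose Lipschitz constant is independent of $\rho_k$, so $L_H^F=\cO(1)$ and the extra $\epsilon_1^{-2}$ factor disappears from (i) and (ii); only $U_H^F=\cO(\rho_k)=\cO(\epsilon_1^{-1})$ survives in the $\min\{n,\cdot\}$ term. The main obstacle is the multiplier bound in the second paragraph: once $\|\tilde\lambda^{k+1}\|=\cO(1)$ is in hand, the remainder is a direct recycling of the estimates already developed for Theorems~\ref{thm:out-itr-cmplxity-1} and~\ref{thm:total-iter-cmplxity}.
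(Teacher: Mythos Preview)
Your proposal is correct and follows essentially the same approach as the paper: the paper also reduces Theorem~\ref{thm:total-iter-cmplxity2} to the observation $\rho_k=\cO(\epsilon_1^{-1})$ under GLICQ and then reruns the estimates from Theorem~\ref{thm:total-iter-cmplxity} verbatim. The only cosmetic difference is that the paper bounds $\|\tilde c(x^{k+1})\|$ directly via $\rho_k\|\nabla c(x^{k+1})\tilde c(x^{k+1})\|\le U_g^f+U_g^c\Lambda+1$ and then applies the pseudo-inverse, whereas you first bound $\|\tilde\lambda^{k+1}\|$ and back out $\|\tilde c(x^{k+1})\|$ from $\tilde\lambda^{k+1}=\lambda^k+\rho_k\tilde c(x^{k+1})$; the two routes are equivalent and yield the same $\cO(1/\rho_k)$ control.
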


\begin{remark}
\begin{enumerate}[{\rm (i)}]
\item As seen from Theorem \ref{thm:total-iter-cmplxity2},  when problem \eqref{model:equa-cnstr} has nonlinear constraints, under GLICQ and some other suitable assumptions, Algorithm \ref{alg:2nd-order-AL-nonconvex} achieves significantly better complexity bounds than the ones in Theorem \ref{thm:total-iter-cmplxity} without constraint qualification.
\item Letting $(\epsilon_1,\epsilon_2)=(\epsilon,\sqrt{\epsilon})$ for some $\epsilon\in(0,1)$, we see that when problem \eqref{model:equa-cnstr} has nonlinear constraints, under GLICQ and some other suitable assumptions, Algorithm~\ref{alg:2nd-order-AL-nonconvex} achieves a total inner iteration complexity of $\widetilde{\cO}(\epsilon^{-7/2})$ and an operation complexity of $\widetilde{\cO}(\epsilon^{-7/2}\min\{n,\epsilon^{-3/4}\})$. They are vastly better than the total inner iteration complexity of $\widetilde{\cO}(\epsilon^{-11/2})$ and the operation complexity of $\widetilde{\cO}(\epsilon^{-11/2}\min\{n,\epsilon^{-3/4}\})$ that are achieved by a proximal AL method in \cite{XW19} for finding a stochastic $(\epsilon,
\sqrt{\epsilon})$-SOSP of \eqref{model:equa-cnstr} yet under a generally stronger GLICQ.
\end{enumerate}
\end{remark}

\section{Numerical results}\label{sec:nr}
We conduct some preliminary experiments to test the performance of our proposed methods (Algorithms~\ref{alg:NCG} and \ref{alg:2nd-order-AL-nonconvex}), and compare them with the Newton-CG method in \cite{RNW18} and the proximal AL method in \cite{XW19}, respectively. All the algorithms are coded in Matlab and all the computations are performed on a desktop with a 3.79 GHz AMD 3900XT 12-Core processor and 32 GB of RAM.

\subsection{Regularized robust regression} \label{rrr}

In this subsection we consider the regularized robust regression problem
\begin{equation}\label{uncnstr}
\min_{x\in\bR^n}\ \sum_{i=1}^m \phi(a_i^Tx - b_i) + \mu \|x\|_4^4,
\end{equation}
where $\phi(t) = t^2 /(1 + t^2)$, $\|x\|_p = (\sum_{i=1}^n |x_i|^p)^{1/p}$ for any $p \ge 1$, and $\mu>0$.

For each triple $(n,m,\mu)$, we randomly generate 10 instances of problem~\eqref{uncnstr}. In particular, we first randomly generate $a_i$, $1\le i\le m$, with all the entries independently chosen from the standard normal distribution. We then randomly generate $\bar b_i$ according to the standard normal distribution and set $b_i= 2m\bar{b}_i$ for $i=1,\ldots, m$.

Our aim is to find a $(10^{-5},10^{-5/2})$-SOSP of \eqref{uncnstr} for the above instances by Algorithm~\ref{alg:NCG} and the Newton-CG method in \cite{RNW18} and compare their performance. For a fair comparison, we use a minimum eigenvalue oracle that returns a deterministic output for them so that they both certainly output an approximate second-order stationary point. Specifically,  we use the Matlab subroutine \textsf{[v,$\lambda$] = eigs(H,1,'smallestreal')} as the minimum eigenvalue oracle to find the minimum eigenvalue $\lambda$ and its associated unit eigenvector $v$ of a real symmetric matrix $H$. Also, for both methods, we choose the all-ones vector as the initial point, and set $\theta = 0.8$, $\zeta = 0.5$, and $\eta = 0.2$.

The computational results of Algorithm~\ref{alg:NCG} and the Newton-CG method in \cite{RNW18} for the instances randomly generated above are presented in Table~\ref{table:num-uncnstr-1}. In detail, the value of $n$, $m$, and $\mu$  is listed in the first three columns, respectively. For each triple $(n,m,\mu)$, the average CPU time (in seconds), the average number of iterations, and the average final objective value over 10 random instances are given in the rest of the columns. One can observe that both methods output an approximate solution with a similar objective value, while our Algorithm~\ref{alg:NCG} substantially outperforms the Newton-CG method in \cite{RNW18} in terms of CPU time. This is consistent with our theoretical finding that Algorithm~\ref{alg:NCG} achieves a better iteration complexity than the Newton-CG method in \cite{RNW18} in terms of dependence on the Lipschitz constant of the Hessian for finding an approximate SOSP.

\begin{table}[t]
{\footnotesize
\centering
\begin{tabular}{ccc||ll||ll||ll}
\hline
& & &\multicolumn{2}{c||}{Objective value} & \multicolumn{2}{c||}{Iterations} & \multicolumn{2}{c}{CPU time (seconds)}\\
$n$ & $m$ & $\mu$ & Algorithm 1 & Newton-CG  & Algorithm 1 & Newton-CG & Algorithm 1 & Newton-CG\\ \hline
100 & 10 & 1 & 5.9 &    5.9 & 85.7 &  116.3 & 1.4 & 1.6 \\ 
100 & 50 & 1 &   45.9 &  45.9 &  82.6  & 158.2  & 1.0  &  2.7  \\ 
100 & 90 & 1 & 84.8 &  84.8 & 102.2 & 224.7 & 2.0  &  4.2  \\ 
500& 50 & 5 & 42.2 &  42.5 & 173.1 & 344.7 & 44.2  & 72.2  \\ 
500 & 250 & 5 & 243.0  & 242.9 & 145.5 & 362.4 & 41.9 &  95.0  \\ 
500 & 450 & 5 & 442.2 & 442.2 & 163.7 & 425.2 & 47.6 & 138.3 \\ 
1000 & 100 & 10 & 90.1 &  90.4  & 162.5 & 361.0 & 110.8 & 259.0 \\ 
1000 & 500 & 10 & 491.1 & 491.2 & 158.3 & 475.4 & 129.1 & 558.4 \\ 
1000 & 900 & 10 &  891.1 & 891.1 & 193.5 & 300.7 & 187.0 & 298.5 \\\hline 
\end{tabular}
\caption{Numerical results for problem~\eqref{uncnstr}}
\label{table:num-uncnstr-1}
}
\end{table}

\subsection{Spherically constrained regularized robust regression}
In subsection we consider the spherically constrained regularized robust regression problem
\begin{equation}\label{cnstr-rlr}
\min_{x\in\bR^n}\ \sum_{i=1}^m \phi(a_i^Tx - b_i) + \mu \|x\|_4^4\quad \st\quad \|x\|_2^2 = 1,
\end{equation}
where $\phi(t) = t^2 /(1 + t^2)$, $\|x\|_p = (\sum_{i=1}^n |x_i|^p)^{1/p}$ for any $p \ge 1$, and $\mu>0$ is a tuning parameter. For each triple {$(n,m,\mu)$,  we randomly generate 10 instances of problem~\eqref{cnstr-rlr} in the same manner as described in Subsection \ref{rrr}.

Our aim is to find a $(10^{-4},10^{-2})$-SOSP of \eqref{cnstr-rlr} for the above instances by Algorithm~\ref{alg:2nd-order-AL-nonconvex} and the proximal AL method \cite[Algorithm~3]{XW19} and compare their performance. For a fair comparison, we use a minimum eigenvalue oracle that returns a deterministic output for them so that they both certainly output an approximate second-order stationary point. Specifically,  we use the Matlab subroutine \textsf{[v,$\lambda$] = eigs(H,1,'smallestreal')} as the minimum eigenvalue oracle to find the minimum eigenvalue $\lambda$ and its associated unit eigenvector $v$ of a real symmetric matrix $H$.  In addition, for both methods, we choose the initial point as $z^0=(1/\sqrt{n},\ldots,1/\sqrt{n})^T$, the initial Lagrangian multiplier as $\lambda^0=0$, and the other parameters as
\begin{itemize}
\item  $\Lambda=100$, $\rho_0=10$, $\alpha=0.25$, and $r=10$ for Algorithm~\ref{alg:2nd-order-AL-nonconvex};
\item $\eta=1$, $q=10$ and  $T_0=2$ for the proximal AL method (\cite{XW19}).
\end{itemize}

The computational results of Algorithm~\ref{alg:2nd-order-AL-nonconvex} and the proximal AL method in \cite{XW19} (abbreviated as Prox-AL) for solving problem~\eqref{cnstr-rlr} for the instances randomly generated above are presented in Table~\ref{table:AL-num}. In detail, the value of $n$, $m$, and $\mu$  is listed in the first three columns, respectively. For each triple $(n,m,\mu)$, the average CPU time (in seconds), the average total number of inner iterations, the average final objective value, and the average final feasibility violation over 10 random instances are given in the rest columns. One can observe that both methods output an approximate solution of similar quality in terms of objective value and feasibility violation, while our Algorithm~\ref{alg:2nd-order-AL-nonconvex} vastly outperforms the proximal AL method in \cite{XW19} in terms of CPU time. This corroborates our theoretical finding that Algorithm~\ref{alg:2nd-order-AL-nonconvex} achieves a significantly better operation complexity than the proximal AL method in \cite{XW19} for finding an approximate SOSP.


\begin{table}
{\footnotesize
\centering
\begin{tabular}{ccc||ll||ll||ll||ll}
\hline
& & &\multicolumn{2}{c||}{Objective value} & \multicolumn{2}{c||}{Feasibility violation ($\times 10^{-4}$)} & \multicolumn{2}{c||}{Total inner iterations} & \multicolumn{2}{c}{CPU time (seconds)}\\
$n$ & $m$ & $\mu$ & Algorithm 2 & Prox-AL  & Algorithm 2 & Prox-AL & Algorithm 2 & Prox-AL &Algorithm 2 & Prox-AL\\ \hline
100 & 10 & 1 & 7.1 & 7.1 & 0.18 & 0.27  & 40.9  & 97.3 & 0.73 & 2.2\\
100 & 50 & 1  & 46.6 & 46.6 & 0.21 & 0.30  & 37.0 & 86.3& 0.78 & 1.7\\
100 & 90 & 1 & 87.0 & 87.0 & 0.12& 0.40  & 39.5 & 68.6& 1.1 & 1.9\\
500 & 50 & 5 &44.4&44.4&0.40&0.68 &59.0&343.4 & 11.4&134.9\\
500& 250 & 5 &244.3&244.3&0.37&0.47&59.0&543.3& 11.7&178.2\\
500 & 450 & 5 &444.0&444.0&0.27&0.53&66.7&634.1&17.1&158.2\\
1000 & 100 & 10 &92.8&92.8&0.28&0.42&95.0&2054.6&46.3&1516.8\\
1000 & 500 & 10 &491.9&491.9&0.22&0.72&68.3&756.2& 39.5&558.6\\
1000 & 900 & 10 &893.4&893.4&0.19&0.37&81.8&1281.4& 57.7&1099.6\\\hline
\end{tabular}
\caption{Numerical results for problem~\eqref{cnstr-rlr}}
\label{table:AL-num}
}
\end{table}

\section{Proof of the main results}\label{sec:proof}
We provide proofs of our main results in Sections \ref{sec:sbpb-solver} and \ref{sec:AL-method}, including Theorem \ref{thm:NCG-iter-oper-cmplxity}, Lemma \ref{lem:level-set-augmented-lagrangian-func}, and Theorems \ref{thm:output-alg1},  \ref{thm:out-itr-cmplxity-1}, \ref{thm:total-iter-cmplxity} and \ref{thm:total-iter-cmplxity2}.
	
\subsection{Proof of the main results in Section \ref{sec:sbpb-solver}}
\label{sec:pf-NCG}

In this subsection we first establish several technical lemmas and then use them to prove Theorem \ref{thm:NCG-iter-oper-cmplxity}.

One can observe from Assumption~\ref{asp:NCG-cmplxity}(b) that for all $x$ and $y\in \Omega$,
\begin{eqnarray}
&&\ \ \ \|\nabla F(y)-\nabla F(x)-\nabla^2 F(x)(y-x)\|\le L_H^F\|y-x\|^2/2, \label{apx-nxt-grad}\\
&&\ \ \ F(y)\le F(x) + \nabla F(x)^T(y-x) + (y-x)^T\nabla^2 F(x) (y-x)/2 +  L_H^F\|y-x\|^3/6. \label{desc-ineq}
\end{eqnarray}

The next lemma provides useful properties of the output of Algorithm~\ref{alg:capped-CG}, whose proof is similar to the ones in \cite[Lemma~3]{RNW18} and \cite[Lemma~7]{NW19} and thus omitted here.

\begin{lemma}\label{lem:SOL-NC-ppty}
Suppose that Assumption~\ref{asp:NCG-cmplxity} holds and the direction $d^t$ results from  the output $d$ of Algorithm \ref{alg:capped-CG} with a type specified in d$\_$type  at some iteration $t$ of Algorithm \ref{alg:NCG}. Then the following statements hold.
\begin{enumerate}[{\rm (i)}]
\item If d$\_$type=SOL, then  $d^t$ satisfies
\beqa
&\epsilon_H\|d^t\|^2\le (d^t)^T\left(\nabla^2 F(x^t)+2\epsilon_H I\right)d^t,\label{SOL-ppty-1}\\
&\|d^t\|\le1.1 \epsilon_H^{-1}\|\nabla F(x^t)\|,\label{SOL-ppty-2}\\
&(d^t)^T\nabla F(x^t)=-(d^t)^T\left(\nabla^2F(x^t)+2\epsilon_HI\right)d^t,\label{SOL-ppty-3}\\
&\|(\nabla^2F(x^t)+2\epsilon_HI)d^t+\nabla F(x^t)\|\le\epsilon_H\zeta\|d^t\|/2.\label{SOL-ppty-4}
\eeqa
\item If d$\_$type=NC, then  $d^t$ satisfies $(d^t)^T\nabla F(x^t)\le0$ and
\begin{equation}\label{NC-ppty}
(d^t)^T\nabla^2F(x^t)d^t/\|d^t\|^2=-\|d^t\|\le-\epsilon_H.
\end{equation}
\end{enumerate}
\end{lemma}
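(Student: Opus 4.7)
The plan is to follow the standard template for analyzing a capped CG method (as in Royer--O'Neill--Wright and Royer--Wright), exploiting the three termination branches of Algorithm~\ref{alg:capped-CG} and then verifying each of the listed inequalities by direct computation. Throughout, I would fix an iteration $t$ of Algorithm~\ref{alg:NCG}, write $H=\nabla^2 F(x^t)$, $g=\nabla F(x^t)$, $\varepsilon=\epsilon_H$, and $A=H+2\varepsilon I$, and appeal to the fact that the capped CG method either returns a direction $d$ declared of type SOL satisfying an approximate-solution residual bound on $Ad+g$ together with a positive-curvature certificate $d^T A d\ge\varepsilon\|d\|^2$, or returns a direction of type NC satisfying $d^T H d<-\varepsilon\|d\|^2$.

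For part (i), I would proceed property by property. The inequality \eqref{SOL-ppty-1} is the positive-curvature certificate issued at the SOL exit of Algorithm~\ref{alg:capped-CG}; it is maintained as an invariant along the Krylov iterates by the ``cap'' tests and carried over to $d^t=d$. The residual bound \eqref{SOL-ppty-4} comes directly from the termination criterion $\|Ad+g\|\le \widehat\zeta\|g\|$ after one unpacks the definition of $\widehat\zeta$ in terms of $\zeta$ and $\varepsilon$, and matches the form $\tfrac12\epsilon_H\zeta\|d^t\|$ used in \cite{RNW18}. Identity \eqref{SOL-ppty-3} follows from the standard CG orthogonality property: since capped CG starts at $d_0=0$ and each iterate lies in a Krylov subspace on which the current residual is orthogonal to all previous conjugate directions, one has $d_j^T g=-d_j^T A d_j$ at every step, and the output inherits this equality. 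Finally, \eqref{SOL-ppty-2} is obtained by combining \eqref{SOL-ppty-1} and \eqref{SOL-ppty-4}: one gets $\varepsilon\|d\|^2\le d^T A d=-d^T g+d^T(Ad+g)\le \|d\|\,\|g\|+\tfrac{\zeta\varepsilon}{2}\|d\|^2$, whence $\|d\|\le \tfrac{2}{2-\zeta}\varepsilon^{-1}\|g\|\le 1.1\,\varepsilon^{-1}\|g\|$ for the admissible range of $\zeta$.

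For part (ii), I would simply compute with the explicit formulae \eqref{dk-nc} and \eqref{dk-2nd-nc}. If $d^t=-\sgn(d^T g)\,|d^T H d|/\|d\|^3\,d$ from the NC exit of capped CG, then $\|d^t\|=|d^T H d|/\|d\|^2$, and since $d^T H d<0$ one checks by direct substitution that $(d^t)^T H d^t/\|d^t\|^2=d^T H d/\|d\|^2=-\|d^t\|$; the bound $-\|d^t\|\le-\epsilon_H$ then comes from the NC certificate $d^T H d\le -\varepsilon\|d\|^2$. An identical computation handles the direction produced by the minimum eigenvalue oracle via \eqref{dk-2nd-nc} using $\|v\|=1$ and the oracle's curvature guarantee. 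The sign assertion $(d^t)^T g\le 0$ is built into the $-\sgn(\cdot)$ factor in both \eqref{dk-nc} and \eqref{dk-2nd-nc}.

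I do not anticipate any deep obstacle, since all ingredients are standard; the only place requiring some care will be \eqref{SOL-ppty-2}, where the constant $1.1$ must be tracked through the combination of the residual bound and the curvature bound, and \eqref{SOL-ppty-1}, which relies on the cap-based invariant of Algorithm~\ref{alg:capped-CG} rather than being immediate from the termination test. Both are handled exactly as in \cite[Lemma~3]{RNW18} and \cite[Lemma~7]{NW19}, and I would cite those arguments rather than reprove them in detail.
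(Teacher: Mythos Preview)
Your approach matches the paper's, which simply cites \cite[Lemma~3]{RNW18} and \cite[Lemma~7]{NW19} and omits the details. Your property-by-property verification is exactly what those references do.

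There is one concrete slip in your sketch for \eqref{SOL-ppty-2}. From \eqref{SOL-ppty-1} and \eqref{SOL-ppty-4} alone you obtain $\|d^t\|\le \tfrac{2}{2-\zeta}\epsilon_H^{-1}\|\nabla F(x^t)\|$, and you then claim $\tfrac{2}{2-\zeta}\le 1.1$ ``for the admissible range of $\zeta$.'' But Algorithm~\ref{alg:NCG} allows any $\zeta\in(0,1)$, for which $\tfrac{2}{2-\zeta}$ can be as large as $2$. The fix is to use \eqref{SOL-ppty-3} instead of \eqref{SOL-ppty-4}: since $(d^t)^T(\nabla^2 F(x^t)+2\epsilon_H I)d^t=-(d^t)^T\nabla F(x^t)\le\|d^t\|\,\|\nabla F(x^t)\|$, combining with \eqref{SOL-ppty-1} gives $\epsilon_H\|d^t\|^2\le\|d^t\|\,\|\nabla F(x^t)\|$, hence $\|d^t\|\le\epsilon_H^{-1}\|\nabla F(x^t)\|$, which is even stronger than the stated $1.1$ constant. (Also, part~(ii) of the lemma concerns only the capped-CG NC branch \eqref{dk-nc}; your treatment of \eqref{dk-2nd-nc} is harmless but unnecessary here.)
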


The next lemma shows that when the search direction $d^t$ in Algorithm \ref{alg:NCG} is of type `SOL', the line search step results in a sufficient reduction on $F$.

\begin{lemma}\label{lem:sol}
Suppose that Assumption~\ref{asp:NCG-cmplxity} holds and the direction $d^t$ results from  the output $d$ of Algorithm~\ref{alg:capped-CG} with d$\_$type=SOL at some iteration $t$ of Algorithm~\ref{alg:NCG}. Let $U_g^F$ and $c_{\rm sol}$ be given in \eqref{lwbd-Hgupbd} and \eqref{csol}, respectively.  Then the following statements hold.
\begin{enumerate}[{\rm (i)}]
\item The step length $\alpha_t$ is well-defined, and moreover,
\begin{equation}\label{lwbd-ak-sol}
\alpha_t\ge\min\left\{1,\sqrt{\frac{\min\{6(1-\eta),2\}}{1.1 L_H^F U_g^F}}\theta\epsilon_H\right\}.
\end{equation}
\item The next iterate $x^{t+1}=x^t+\alpha_td^t$ satisfies 
\begin{equation}\label{suff-desc-sol}
F(x^t)-F(x^{t+1})\ge c_{\rm sol}\min\{\|\nabla F(x^{t+1})\|^2\epsilon_H^{-1},\epsilon_H^3\}.
\end{equation}
\end{enumerate}
\end{lemma}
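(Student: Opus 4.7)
For part~(i), my plan is to apply the cubic descent inequality~\eqref{desc-ineq} at $y=x^t+\alpha d^t$ and substitute the identity~\eqref{SOL-ppty-3} together with the curvature lower bound~\eqref{SOL-ppty-1}, obtaining, for every $\alpha\in(0,1]$,
\[
F(x^t+\alpha d^t) - F(x^t) \le -\alpha\epsilon_H\|d^t\|^2 - (\alpha^2/2)\epsilon_H\|d^t\|^2 + (L_H^F/6)\alpha^3\|d^t\|^3.
\]
Comparing this with the line search threshold $-\eta\epsilon_H\alpha^2\|d^t\|^2$ and dividing through by $\alpha\epsilon_H\|d^t\|^2$ reduces~\eqref{ls-sol} to $(L_H^F\alpha^2\|d^t\|/(6\epsilon_H)) < 1+(1/2-\eta)\alpha$, whose right-hand side is bounded below on $(0,1]$ by $\min\{6(1-\eta),2\}/6$ after a short case split on $\eta\lessgtr 1/2$. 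Hence~\eqref{ls-sol} must hold whenever $\alpha\le\sqrt{\min\{6(1-\eta),2\}\epsilon_H/(L_H^F\|d^t\|)}$, and the standard backtracking argument combined with the upper bound $\|d^t\|\le 1.1\epsilon_H^{-1}U_g^F$ from~\eqref{SOL-ppty-2} produces~\eqref{lwbd-ak-sol} and confirms well-definedness of $\alpha_t$.

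For part~(ii), I would start from $F(x^t)-F(x^{t+1})\ge\eta\epsilon_H\alpha_t^2\|d^t\|^2$ and treat the cases $\alpha_t=1$ and $\alpha_t<1$ separately. In the full-step case, \eqref{apx-nxt-grad} combined with the residual bound~\eqref{SOL-ppty-4} gives
\[
\|\nabla F(x^{t+1})\| \le (2+\zeta/2)\epsilon_H\|d^t\| + (L_H^F/2)\|d^t\|^2,
\]
whose inversion yields $\|d^t\|\ge r\min\{\epsilon_H,\|\nabla F(x^{t+1})\|/\epsilon_H\}$ with $r$ the positive root of $(L_H^F/2)r^2+(2+\zeta/2)r=1$; rationalization gives $r=4/((4+\zeta)+\sqrt{(4+\zeta)^2+8L_H^F})$, exactly the first entry inside the $\min$ in~\eqref{csol}. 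Squaring and multiplying by $\eta\epsilon_H$ then delivers the decrease $\eta r^2\min\{\|\nabla F(x^{t+1})\|^2/\epsilon_H,\epsilon_H^3\}$. In the backtracking case, the failure of~\eqref{ls-sol} at $\alpha=1$ combined with the descent estimate above produces the auxiliary bound $\|d^t\|\ge 3(3-2\eta)\epsilon_H/L_H^F$; combined with the step-size lower bound from part~(i), this yields a decrease of order $\epsilon_H^3$ whose prefactor matches (up to an $\eta$-dependent factor at least one) the second entry in~\eqref{csol}. Taking the worse of the two case prefactors produces exactly the $c_{\rm sol}$ of~\eqref{csol}, proving~\eqref{suff-desc-sol}.

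The main delicacy will be the quadratic inversion in the full-step case, which has to be handled uniformly in the two regimes $\|\nabla F(x^{t+1})\|\gtrless\epsilon_H^2$ to recover the $\min$ structure in~\eqref{suff-desc-sol}; the identity $(L_H^F/2)r+(2+\zeta/2)=1/r$ (from the defining equation of $r$) cleanly resolves the sub-$\epsilon_H^2$ regime by providing the necessary linearization of $\phi(b):=(L_H^F/2)b^2+(2+\zeta/2)b$ on $[0,r]$. The remaining manipulations are routine algebra.
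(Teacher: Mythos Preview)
Your argument has a genuine gap that touches the central novelty of the lemma. In part~(i) you apply the cubic descent inequality~\eqref{desc-ineq} at $y=x^t+\alpha d^t$ for every $\alpha\in(0,1]$, and in the backtracking case of part~(ii) you again invoke it at $\alpha=1$. But \eqref{desc-ineq} is only available for $x,y\in\Omega$, the convex neighbourhood of the level set from Assumption~\ref{asp:NCG-cmplxity}(b). A trial point at which the line search~\eqref{ls-sol} fails may satisfy $F(x^t+\theta^j d^t)>F(x^t)$, and then nothing guarantees $x^t+\theta^j d^t\in\Omega$; your inequality is simply not known to hold there. This is exactly the issue the paper flags when contrasting Assumption~\ref{asp:NCG-cmplxity} with the stronger hypothesis in \cite{RNW18} that Lipschitzness holds on a set containing \emph{all} trial points.

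The paper closes this gap by splitting each failed step into two cases. If $F(x^t+\theta^j d^t)\le F(x^t)$ the trial point lies in the level set and your computation goes through. If instead $F(x^t+\theta^j d^t)>F(x^t)$, the paper uses an intermediate-value argument: since $\phi(\alpha):=F(x^t+\alpha d^t)$ has $\phi'(0)<0$ and $\phi(\theta^j)>\phi(0)$, there is a local minimizer $\alpha^*\in(0,\theta^j)$ with $\phi(\alpha^*)<\phi(0)$, hence $x^t+\alpha^* d^t$ lies in the level set, and one can legitimately apply~\eqref{apx-nxt-grad} there (using $\phi'(\alpha^*)=0$) to extract the bound $\theta^{2j}\ge 2\epsilon_H/(L_H^F\|d^t\|)$. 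This two-case treatment is what produces the constant $\min\{6(1-\eta),2\}$. Your sketch recovers the correct algebra in the ``good'' case but omits the mechanism that handles the ``bad'' one; without it the proof does not go through under Assumption~\ref{asp:NCG-cmplxity}. Your full-step analysis in part~(ii) is fine, since $\alpha_t=1$ forces $F(x^{t+1})<F(x^t)$ and hence $x^{t+1}\in\Omega$, but you should state this explicitly.
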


\begin{proof}
One can observe that $F$ is descent along the iterates (whenever well-defined) generated by Algorithm~\ref{alg:NCG}, which together with $x^0=u^0$ implies that $F(x^t)\le F(u^0)$ and hence
$\|\nabla F(x^t)\|\le U^F_g$ due to \eqref{lwbd-Hgupbd}.
In addition, since $d^t$ results from the output $d$ of Algorithm~\ref{alg:capped-CG} with d$\_$type=SOL, one can see that $\|\nabla F(x^t)\| > \epsilon_g$ and \eqref{SOL-ppty-1}-\eqref{SOL-ppty-4} hold for $d^t$. Moreover, by $\|\nabla F(x^t)\| > \epsilon_g$ and \eqref{SOL-ppty-4}, one can conclude that $d^t\neq 0$.

We first prove statement (i). If \eqref{ls-sol} holds for $j=0$, then $\alpha_t=1$, which clearly implies that \eqref{lwbd-ak-sol} holds. We now suppose that \eqref{ls-sol} fails for $j=0$. Claim that for all $j\ge0$ that violate \eqref{ls-sol}, it holds that
\begin{equation}\label{lwbd-sol-ak}
\theta^{2j}\ge\min\{6(1-\eta),2\}\epsilon_H(L^F_H)^{-1}\|d^t\|^{-1}.
\end{equation}
Indeed, suppose that \eqref{ls-sol} is violated by some $j \ge 0$. We now show that \eqref{lwbd-sol-ak} holds for such $j$ by considering two separate cases below.

Case 1) $F(x^t+\theta^jd^t)>F(x^t)$. Let $\phi(\alpha)=F(x^t+\alpha d^t)$. Then $\phi(\theta^j)>\phi(0)$. Also, since $d^t\neq0$, by \eqref{SOL-ppty-1} and \eqref{SOL-ppty-3}, one has 
\[
\phi^\prime(0)  =\nabla F(x^t)^Td^t = -(d^t)^T(\nabla^2 F(x^t)+2\epsilon_HI)d^t 
 \le-\epsilon_H\|d^t\|^2< 0.
\]
Using these, we can observe that there exists a local minimizer $\alpha^*\in(0,\theta^j)$ of $\phi$ such that $\phi^\prime(\alpha^*)=\nabla F(x^t+\alpha^*d^t)^Td^t=0$ and $\phi(\alpha^*)<\phi(0)$, which implies that $F(x^t+\alpha^*d^t)< F(x^t)\le F(u^0)$. Hence, \eqref{apx-nxt-grad} holds for $x=x^t$ and $y=x^t+\alpha^*d^t$. Using this, $0<\alpha^*<\theta^j\le1$ and $\nabla F(x^t+\alpha^*d^t)^Td^t=0$, we obtain
\[
\begin{array}{l}
\frac{(\alpha^*)^2L_H^F}{2}\|d^t\|^3\overset{\eqref{apx-nxt-grad}}{\ge}\|d^t\|\|\nabla F(x^t+\alpha^*d^t)-\nabla F(x^t)-\alpha^*\nabla^2 F(x^t)d^t\|\\[6pt]
\ge(d^t)^T(\nabla F(x^t+\alpha^*d^t)-\nabla F(x^t)-\alpha^*\nabla^2 F(x^t)d^t)=-(d^t)^T\nabla F(x^t)-\alpha^*(d^t)^T\nabla^2 F(x^t)d^t\\[6pt]
\overset{\eqref{SOL-ppty-3}}{=}(1-\alpha^*)(d^t)^T(\nabla^2 F(x^t)+2\epsilon_HI)d^t +2 \alpha^*\epsilon_H\|d^t\|^2\overset{\eqref{SOL-ppty-1}}{\ge} (1+\alpha^*)\epsilon_H\|d^t\|^2\ge \epsilon_H \|d^t\|^2,
\end{array}
\]
which along with $d^t\neq 0$ implies that $(\alpha^*)^2\ge 2\epsilon_H(L_H^F)^{-1}\|d^t\|^{-1}$. Using this and $\theta^{j}>\alpha^*$, we conclude that \eqref{lwbd-sol-ak} holds in this case.

Case 2) $F(x^t+\theta^jd^t)\le F(x^t)$. This together with $F(x^t)\le F(u^0)$ implies that
\eqref{desc-ineq} holds for $x=x^t$ and $y=x^t+\theta^jd^t$. Then, because $j$ violates \eqref{ls-sol}, we obtain
\beqa
&&-\eta\epsilon_H\theta^{2j}\|d^t\|^2\le F(x^t+\theta^jd^t)-F(x^t)\overset{\eqref{desc-ineq}}{\le} \theta^j \nabla F(x^t)^Td^t + \frac{\theta^{2j}}{2}(d^t)^T\nabla^2 F(x^t) d^t + \frac{L_H^F}{6}\theta^{3j}\|d^t\|^3 \nn \\
&&\overset{\eqref{SOL-ppty-3}}{=}-\theta^j (d^t)^T(\nabla^2 F(x^t) +2\epsilon_H I)d^t + \frac{\theta^{2j}}{2}(d^t)^T\nabla^2 F(x^t) d^t + \frac{L_H^F}{6}\theta^{3j}\|d^t\|^3 \nn \\
&&=-\theta^j\left(1-\frac{\theta^j}{2}\right)(d^t)^T(\nabla^2 F(x^t) +2\epsilon_H I)d^t - \theta^{2j} \epsilon_H\|d^t\|^2 + \frac{L_H^F}{6}\theta^{3j}\|d^t\|^3 \nn \\
&&\overset{\eqref{SOL-ppty-1}}{\le} -\theta^j\left(1-\frac{\theta^j}{2}\right) \epsilon_H\|d^t\|^2 - \theta^{2j}\epsilon_H\|d^t\|^2 + \frac{L_H^F}{6}\theta^{3j}\|d^t\|^3 \le -\theta^j \epsilon_H\|d^t\|^2 + \frac{L_H^F}{6}\theta^{3j}\|d^t\|^3. \label{ineq:desc-ls-sol}
\eeqa
Recall that $d^t \neq 0$. Dividing both sides of \eqref{ineq:desc-ls-sol} by $L_H^F\theta^j\|d^t\|^3/6$ and using $\eta, \theta\in(0,1)$, we obtain that 
\[
\theta^{2j}\ge 6(1-\theta^j\eta)\epsilon_H(L_H^F)^{-1}\|d^t\|^{-1}\ge6(1-\eta)\epsilon_H(L_H^F)^{-1}\|d^t\|^{-1}.
\] 
 Hence, \eqref{lwbd-sol-ak} also holds in this case.

Combining the above two cases, we conclude that  \eqref{lwbd-sol-ak} holds for any $j\ge0$ that violates \eqref{ls-sol}.
By this and $\theta\in(0,1)$, one can see that all $j\ge0$ that violate \eqref{ls-sol} must be bounded above. It then follows that the step length $\alpha_t$ associated with \eqref{ls-sol} is well-defined. We next prove \eqref{lwbd-ak-sol}. Observe from the definition of $j_t$ in Algorithm~\ref{alg:NCG} that $j=j_t-1$ violates \eqref{ls-sol} and hence \eqref{lwbd-sol-ak} holds for $j=j_t-1$. Then, by \eqref{lwbd-sol-ak} with $j=j_t-1$ and $\alpha_t = \theta^{j_t}$, one has
\begin{equation}\label{lwbd-sol-ak-1}
\alpha_t = \theta^{j_t} \ge\sqrt{\min\{6(1-\eta),2\}\epsilon_H(L^F_H)^{-1}} \ \theta\|d^t\|^{-1/2},
\end{equation}
which,  along with \eqref{SOL-ppty-2} and $\|\nabla F(x^t)\|\le U^F_g$, implies \eqref{lwbd-ak-sol}. This proves statement (i).

We next prove statement (ii) 
by considering two separate cases below.

Case 1) $\alpha_t=1$. By this, one knows that \eqref{ls-sol} holds for $j=0$. It then follows that $F(x^t+d^t)\le F(x^t)\le F(u^0)$, which implies that \eqref{apx-nxt-grad} holds for $x=x^t$ and $y=x^t+d^t$. By this and \eqref{SOL-ppty-4}, one has
\[
\begin{array}{lcl}
\|\nabla F(x^{t+1})\|=\|\nabla F(x^{t}+d^t)\|
&\le&\|\nabla F(x^t+d^t)-\nabla F(x^t)-\nabla^2 F(x^t)d^t\| \\[4pt]
&&+\|(\nabla^2 F(x^t)+2\epsilon_HI)d^t+\nabla F(x^t)\|+2\epsilon_H\|d^t\|\\[4pt]
&\le& \frac{L_H^F}{2}\|d^t\|^2 + \frac{4+\zeta}{2}\epsilon_H\|d^t\|,
\end{array}
\]
where the last inequality follows from \eqref{apx-nxt-grad} and \eqref{SOL-ppty-4}. Solving the above inequality for $\|d^t\|$ and using the fact that $\|d^t\|>0$, we obtain that
\[
\begin{array}{rcll}
\|d^t\|&\ge&\frac{-(4+\zeta)\epsilon_H + \sqrt{(4+\zeta)^2\epsilon_H^2+8L_H^F\|\nabla F(x^{t+1})\|}}{2L_H^F}\ \ge\
\frac{-(4+\zeta)\epsilon_H + \sqrt{(4+\zeta)^2\epsilon_H^2+8L_H^F\epsilon_H^2}}{2L_H^F}\min\{\|\nabla F(x^{t+1})\|/\epsilon_H^2,1\}\\[6pt]
&=&\frac{4}{4+\zeta+\sqrt{(4+\zeta)^2+8L_H^F}}\min\{\|\nabla F(x^{t+1})\|/\epsilon_H,\epsilon_H\},
\end{array}
\]
where the second inequality follows from the inequality $-a+\sqrt{a^2+bs}\ge(-a+\sqrt{a^2+b})\min\{s,1\}$ for all $a,b,s\ge0$,
which can be verified by performing a rationalization to the terms $-a+\sqrt{a^2+b}$ and $-a+\sqrt{a^2+bs}$, respectively.
By this, $\alpha_t=1$, \eqref{ls-sol} and \eqref{csol}, one can see that \eqref{suff-desc-sol} holds.

Case 2) $\alpha_t<1$.
It then follows that $j=0$ violates \eqref{ls-sol} and hence \eqref{lwbd-sol-ak} holds for $j=0$.
Now, letting $j=0$ in \eqref{lwbd-sol-ak}, we obtain that $\|d^t\|\ge \min\{6(1-\eta),2\}\epsilon_H/L_H^F$,
 which together with \eqref{ls-sol} and \eqref{lwbd-sol-ak-1} implies that
\[
F(x^t)-F(x^{t+1}) \ge \eta \epsilon_H\theta^{2j_t}\|d^t\|^2 \ge \eta\frac{\min\{6(1-\eta),2\}\epsilon_H^2}{L_H^F}\theta^2\|d^t\|\ge \eta\left[\frac{\min\{6(1-\eta),2\}\theta}{L_H^F}\right]^2\epsilon_H^3.
\]
By this and \eqref{csol}, one can see that \eqref{suff-desc-sol} also holds in this case.
\end{proof}

The following lemma shows that when the search direction $d^t$ in Algorithm \ref{alg:NCG} is of type `NC', the line search step results in a sufficient reduction on $F$ as well.

\begin{lemma}\label{lem:nc}
Suppose that Assumption \ref{asp:NCG-cmplxity} holds and the direction $d^t$ results from  either the output $d$ of Algorithm~\ref{alg:capped-CG} with d$\_$type=NC or the output $v$ of Algorithm~\ref{pro:meo} at some iteration $t$ of Algorithm~\ref{alg:NCG}.
Let  $c_{\rm nc}$ be defined in \eqref{cnc}.
Then the following statements hold.
\begin{enumerate}[{\rm (i)}]
\item The step length $\alpha_t$ is well-defined, and $\alpha_t\ge \min\{1,\theta/L_H^F,3(1-\eta)\theta/L_H^F\}$.
\item The next iterate $x^{t+1}=x^t+\alpha_td^t$ satisfies $F(x^t)-F(x^{t+1})\ge c_{\rm nc}\epsilon_H^3$.
\end{enumerate}
\end{lemma}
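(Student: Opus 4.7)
The plan is to mirror the structure of the proof of Lemma~\ref{lem:sol}, exploiting the hybrid line-search machinery but now around a direction with guaranteed negative curvature rather than an approximate Newton step. First I would record the properties of $d^t$ that are common to both sources of NC directions (the capped-CG method and the minimum eigenvalue oracle combined with \eqref{dk-nc} or \eqref{dk-2nd-nc}): namely $(d^t)^T\nabla F(x^t)\le 0$, $(d^t)^T\nabla^2F(x^t)d^t=-\|d^t\|^3$, and $\|d^t\|\ge \epsilon_H/2$ (with the stronger bound $\|d^t\|\ge \epsilon_H$ in the capped-CG case, by Lemma~\ref{lem:SOL-NC-ppty}(ii)). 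These follow directly from the definitions \eqref{dk-nc}, \eqref{dk-2nd-nc} and from $v^T\nabla^2 F(x^t)v\le -\epsilon_H/2$ guaranteed by Algorithm~\ref{pro:meo}. I would also note at the outset that, since Algorithm~\ref{alg:NCG} is descent, $F(x^t)\le F(u^0)$, so $x^t\in \scL_F(u^0)\subset\Omega$ and the Hessian Lipschitz inequalities \eqref{apx-nxt-grad} and \eqref{desc-ineq} are available at $x^t$ for points lying in $\Omega$.

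For statement~(i), if $\alpha_t=1$ the bound is trivial. Otherwise $j=j_t-1$ violates \eqref{ls-nc}, and I would split into two subcases as in the SOL proof.\;(a)\;If $F(x^t+\theta^{j_t-1}d^t)>F(x^t)$, set $\phi(\alpha)=F(x^t+\alpha d^t)$ and observe $\phi'(0)=\nabla F(x^t)^Td^t\le 0$ while $\phi''(0)=(d^t)^T\nabla^2F(x^t)d^t=-\|d^t\|^3<0$. Hence $\phi$ strictly decreases on an initial interval, so that together with $\phi(\theta^{j_t-1})>\phi(0)$ it must attain a local minimum at some $\alpha^*\in(0,\theta^{j_t-1})$ with $\phi(\alpha^*)<\phi(0)\le F(u^0)$; in particular $x^t+\alpha^*d^t\in\scL_F(u^0)\subset\Omega$ and $\nabla F(x^t+\alpha^*d^t)^Td^t=0$. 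Taking the inner product of \eqref{apx-nxt-grad} at $y=x^t+\alpha^*d^t$ with $d^t$, and using the two sign facts above to bound the left-hand side below by $\alpha^*\|d^t\|^3$, would yield $\alpha^*\ge 2/L_H^F$ and hence $\alpha_t\ge 2\theta/L_H^F\ge \theta/L_H^F$.\;(b)\;If $F(x^t+\theta^{j_t-1}d^t)\le F(x^t)$, the corresponding trial point lies in $\scL_F(u^0)\subset\Omega$, so \eqref{desc-ineq} applies; substituting, dropping the non-positive gradient term, and comparing with the violated criterion \eqref{ls-nc} would reduce to $(1-\eta)/2\le L_H^F\theta^{j_t-1}/6$, giving $\alpha_t\ge 3(1-\eta)\theta/L_H^F$.

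For statement~(ii), by the line-search rule \eqref{ls-nc} at the accepted step, $F(x^t)-F(x^{t+1})\ge\eta\alpha_t^2\|d^t\|^3/2$. Combining the lower bound on $\alpha_t$ from~(i) with $\|d^t\|\ge \epsilon_H/2$ gives $F(x^t)-F(x^{t+1})\ge (\eta/16)\min\{1,(\min\{3(1-\eta),1\}\theta/L_H^F)^2\}\epsilon_H^3$, which matches $c_{\rm nc}\epsilon_H^3$ as defined in \eqref{cnc}.

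The main obstacle is subcase~(a) of part~(i): since Assumption~\ref{asp:NCG-cmplxity}(b) only guarantees the Hessian Lipschitz property on the neighborhood $\Omega$ of $\scL_F(u^0)$, I cannot blindly apply \eqref{apx-nxt-grad} along the full segment up to $\theta^{j_t-1}$. The remedy is to extract the interior local minimizer $\alpha^*$ by a soft second-order argument, so that $x^t+\alpha^*d^t$ is automatically in $\scL_F(u^0)\subset\Omega$; this is precisely the device that lets this proof avoid the stronger Lipschitz assumption used in \cite{RNW18}, and it is the step that deserves the most care.
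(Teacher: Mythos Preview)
Your proposal is correct and follows essentially the same two-case line-search analysis as the paper; the overall structure and the treatment of subcase~(b) and of statement~(ii) match the paper exactly.

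The one genuine difference is in subcase~(a). The paper exploits the \emph{second}-order optimality condition $\phi''(\alpha^*)\ge 0$ at the interior minimizer together with the Hessian Lipschitz bound \eqref{F-Hess-Lip} to obtain $L_H^F\alpha^*\|d^t\|^3\ge -(d^t)^T\nabla^2F(x^t)d^t=\|d^t\|^3$, i.e.\ $\alpha^*\ge 1/L_H^F$. You instead use the \emph{first}-order condition $\phi'(\alpha^*)=0$ together with the gradient approximation \eqref{apx-nxt-grad}, which (after Cauchy--Schwarz and the sign facts) gives the slightly sharper $\alpha^*\ge 2/L_H^F$. Both routes rely only on $x^t+\alpha^*d^t\in\scL_F(u^0)\subset\Omega$, so both respect the weakened Lipschitz assumption. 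One small presentational point: you argue directly at $j=j_t-1$, whereas the paper bounds $\theta^j$ for \emph{every} $j$ violating \eqref{ls-nc} and thereby deduces well-definedness of $\alpha_t$ as a by-product; your argument also yields this once applied to an arbitrary violating $j$, so you should state it that way to fully cover the ``$\alpha_t$ is well-defined'' part of statement~(i).
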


\begin{proof}
Observe that $F$ is descent along the iterates (whenever well-defined) generated by Algorithm~\ref{alg:NCG}. Using this and $x^0=u^0$, we have $F(x^t)\le F(u^0)$.
By the assumption on $d^t$, one can see from Algorithm \ref{alg:NCG} that
$d^t$ is a negative curvature direction given in \eqref{dk-nc} or \eqref{dk-2nd-nc}. Also, notice that the vector $v$ returned from Algorithm~\ref{pro:meo} satisfies $\|v\|=1$. By these, Lemma~\ref{lem:SOL-NC-ppty}(ii), \eqref{dk-nc} and \eqref{dk-2nd-nc}, one can observe that
\begin{equation}\label{dk-Hk-dk3}
\nabla F(x^t)^Td^t\le 0, \quad (d^t)^T\nabla^2 F(x^t)d^t=-\|d^t\|^3<0.
\end{equation}

We first prove statement (i). If \eqref{ls-nc} holds for $j=0$, then $\alpha_t=1$, which clearly implies that $\alpha_t\ge\min\{1,\theta/L_H^F,3(1-\eta)\theta/L_H^F\}$. We now suppose that \eqref{ls-nc} fails for $j=0$. Claim that for all $j\ge0$ that violate \eqref{ls-nc}, it holds that
\begin{equation}\label{lwbd-at-nc}
\theta^j\ge \min\{1/L_H^F,3(1-\eta)/L_H^F\}.
\end{equation}
Indeed, suppose that \eqref{ls-nc} is violated by some $j \ge 0$. We now show that \eqref{lwbd-at-nc} holds for such $j$ by considering two separate cases below.

Case 1) $F(x^t+\theta^jd^t)>F(x^t)$. Let $\phi(\alpha)=F(x^t+\alpha d^t)$. Then $\phi(\theta^j)>\phi(0)$. Also, by  \eqref{dk-Hk-dk3}, one has 
\[
\phi^\prime(0)=\nabla F(x^t)^Td^t\le 0, \quad \phi^{\prime\prime}(0)=(d^t)^T\nabla^2 F(x^t)d^t<0.
\]
 Using these, we can observe that there exists a local minimizer $\alpha^*\in(0,\theta^j)$ of $\phi$ such that $\phi(\alpha^*)<\phi(0)$, namely,  $F(x^t+\alpha^*d^t)< F(x^t)$. By the second-order optimality condition of $\phi$ at $\alpha^*$, one has $\phi^{\prime\prime}(\alpha^*)=(d^t)^T\nabla^2 F(x^t+\alpha^*d^t)d^t\ge 0$. Since $F(x^t+\alpha^*d^t)<F(x^t)\le F(u^0)$, it follows that \eqref{F-Hess-Lip} holds for $x=x^t$ and $y=x^t+\alpha^*d^t$. Using this, the second relation in \eqref{dk-Hk-dk3} and $(d^t)^T\nabla^2 F(x^t+\alpha^*d^t)d^t\ge0$, we obtain that
in \eqref{dk-Hk-dk3} and $(d^t)^T\nabla^2 F(x^t+\alpha^*d^t)d^t\ge0$, we obtain that
\beqa
L_H^F\alpha^*\|d^t\|^3 &\overset{\eqref{F-Hess-Lip}}{\ge}&\|d^t\|^2\|\nabla^2 F(x^t+\alpha^*d^t)-\nabla^2 F(x^t)\| \ge (d^t)^T(\nabla^2 F(x^t+\alpha^*d^t)-\nabla^2 F(x^t))d^t \nn \\
&\ge& -(d^t)^T\nabla^2 F(x^t)d^t=\|d^t\|^3. \label{lwbd-dt-nc-j1}
\eeqa
Recall from \eqref{dk-Hk-dk3} that $d^t\neq 0$. It then follows from \eqref{lwbd-dt-nc-j1} that $\alpha^*\ge 1/L_H^F$, which along with $\theta^j>\alpha^*$ implies that $ \theta^j> 1/L_H^F$. Hence, \eqref{lwbd-at-nc} holds in this case.

Case 2) $F(x^t+\theta^jd^t)\le F(x^t)$. It follows from this and $F(x^t)\le F(u^0)$ that \eqref{desc-ineq} holds for $x=x^t$ and $y=x^t+\theta^jd^t$. By this and the fact that $j$ violates \eqref{ls-nc}, one has
\begin{equation*}\label{lwbd-dt-nc}
\begin{array}{lcl}
-\frac{\eta}{2}\theta^{2j}\|d^t\|^3 &\le& F(x^t+\theta^j d^t) - F(x^t) \overset{\eqref{desc-ineq}}{\le} \theta^j \nabla F(x^t)^Td^t + \frac{\theta^{2j}}{2}(d^t)^T\nabla^2 F(x^t) d^t + \frac{L_H^F}{6}\theta^{3j}\|d^t\|^3\\[6pt]
&\overset{\eqref{dk-Hk-dk3}}{\le}& -\frac{\theta^{2j}}{2}\|d^t\|^3 + \frac{L_H^F}{6}\theta^{3j}\|d^t\|^3,
\end{array}
\end{equation*}
which together with $d^t\neq 0$ implies that $\theta^j\ge 3(1-\eta)/L_H^F$. Hence, \eqref{lwbd-at-nc} also holds  in this case.

Combining the above two cases, we conclude that \eqref{lwbd-at-nc} holds for any $j\ge0$ that violates \eqref{ls-nc}. By this and $\theta\in(0,1)$, one can see that all $j\ge0$ that violate \eqref{ls-nc} must be bounded above. It then follows that the step length $\alpha_t$ associated with \eqref{ls-nc} is well-defined. We next derive a lower bound for $\alpha_t$. Notice from the definition of $j_t$ in Algorithm~\ref{alg:NCG} that $j=j_t-1$ violates \eqref{ls-nc} and hence \eqref{lwbd-at-nc} holds for $j=j_t-1$. Then, by \eqref{lwbd-at-nc} with $j=j_t-1$ and $\alpha_t = \theta^{j_t}$, one has $\alpha_t=\theta^{j_t} \ge \min\{\theta/L_H^F,3(1-\eta)\theta/L_H^F\}$, which immediately yields $\alpha_t\ge \min\{1,\theta/L_H^F,3(1-\eta)\theta/L_H^F\}$ as desired.

We next prove statement (ii) by considering two separate cases below.

Case 1) $d^t$ results from the output $d$ of Algorithm~\ref{alg:capped-CG} with d$\_$type=NC. It then follows from  \eqref{NC-ppty} that $\|d^t\|\ge\epsilon_H$. 
This together with  \eqref{ls-nc} and statement (i) implies that statement (ii) holds.

Case 2) $d^t$ results from the output $v$ of Algorithm~\ref{pro:meo}. Notice from
Algorithm~\ref{pro:meo} that $\|v\|=1$ and $v^T\nabla^2F(x^t)v\le-\epsilon_H/2$, which along with \eqref{dk-2nd-nc} yields $\|d^t\|\ge\epsilon_H/2$. By this, \eqref{ls-nc} and statement (i), one can see that statement (ii) again holds.
\end{proof}


\begin{proof}[Proof of Theorem \ref{thm:NCG-iter-oper-cmplxity}]
For notational convenience, we let $\{x^t\}_{t\in\bT}$ denote all the iterates generated by Algorithm~\ref{alg:NCG}, where $\bT$ is a set of consecutive nonnegative integers starting from $0$. Notice that $F$ is descent along the iterates generated by Algorithm~\ref{alg:NCG}, which together with $x^0=u^0$ implies that $x^t\in\{x:F(x)\le F(u^0)\}$. It then follows from
\eqref{lwbd-Hgupbd} that $\|\nabla^2 F(x^t)\|\le U^F_H$ holds for all $t\in\bT$.

(i) Suppose for contradiction that the total number of calls of Algorithm~\ref{pro:meo} in Algorithm~\ref{alg:NCG} is more than $T_2$. Notice from Algorithm~\ref{alg:NCG} and Lemma~\ref{lem:nc}(ii) that each of these calls, except the last one, returns a sufficiently negative curvature direction, and each of them results in a reduction on $F$ of at least $c_{\rm nc} \epsilon_H^3$. Hence, 
\[
T_2 c_{\rm nc} \epsilon_H^3 \le \sum_{t\in\bT} [F(x^t)-F(x^{t+1})] \le F(x^0)-\Fl = \Fh-\Fl,
\]
which contradicts the definition of $T_2$ given in \eqref{T1}. Hence, statement (i) of Theorem \ref{thm:NCG-iter-oper-cmplxity} holds.

(ii) Suppose for contradiction that the total number of calls of Algorithm~\ref{alg:capped-CG} in Algorithm~\ref{alg:NCG} is more than $T_1$. Observe that if Algorithm~\ref{alg:capped-CG} is called at some iteration $t$ and generates the next iterate $x^{t+1}$ satisfying $\|\nabla F(x^{t+1})\|\le\epsilon_g$, then Algorithm~\ref{pro:meo} must be called at the next iteration $t+1$. In view of this and statement (i) of Theorem \ref{thm:NCG-iter-oper-cmplxity}, we see that the total number of such iterations $t$ is at most $T_2$. Hence, the total number of iterations $t$ of Algorithm~\ref{alg:NCG} at which Algorithm~\ref{alg:capped-CG} is called and generates the next iterate $x^{t+1}$ satisfying $\|\nabla F(x^{t+1})\|>\epsilon_g$ is at least $T_1-T_2+1$. Moreover, for each of such iterations $t$, we observe from Lemmas~\ref{lem:sol}(ii) and \ref{lem:nc}(ii) that $F(x^t)-F(x^{t+1})\ge \min\{c_{\rm sol},c_{\rm nc}\}\min\{\epsilon_g^2\epsilon_H^{-1},\epsilon_H^3\}$. It then follows that 
\[
(T_1-T_2+1) \min\{c_{\rm sol},c_{\rm nc}\}\min\{\epsilon_g^2\epsilon_H^{-1},\epsilon_H^3\} \le \sum_{t\in\bT}[F(x^t)-F(x^{t+1})]\le \Fh-\Fl,
\]
which contradicts the definition of $T_1$ and $T_2$ given in \eqref{T1}. Hence, statement (ii) of Theorem \ref{thm:NCG-iter-oper-cmplxity} holds.

(iii) Notice that either Algorithm~\ref{alg:capped-CG} or \ref{pro:meo} is called at each iteration of Algorithm~\ref{alg:NCG}. It follows from this and statements (i) and (ii) of Theorem \ref{thm:NCG-iter-oper-cmplxity} that the total number of iterations of Algorithm~\ref{alg:NCG} is at most $T_1+T_2$. In addition, the relation \eqref{NCG-iter} follows from \eqref{csol}, \eqref{cnc} and \eqref{T1}. One can also observe that the output $x^t$ of Algorithm~\ref{alg:NCG} satisfies $\|\nabla F(x^t)\|\le \epsilon_g$ deterministically and $\lambda_{\min}(\nabla^2F(x^t))\ge-\epsilon_H$ with probability at least $1-\delta$ for some $0 \le t \le T_1+T_2$, where the latter part is due to
Algorithm~\ref{pro:meo}. This completes the proof of statement (ii) of Theorem \ref{thm:NCG-iter-oper-cmplxity}.

(iv) By Theorem~\ref{lem:capped-CG} with $(H,\varepsilon)=(\nabla^2 F(x^t),\epsilon_H)$ and the fact that $\|\nabla^2 F(x^t)\|\le U^F_H$, one can observe that  the number of Hessian-vector products required by each call of Algorithm~\ref{alg:capped-CG} with input $U = 0$ is at most $\widetilde{\cO}(\min\{n,(U_H^F/\epsilon_H)^{1/2}\})$. In addition, by Theorem~\ref{rand-Lanczos} with $(H,\varepsilon)=(\nabla^2 F(x^t),\epsilon_H)$, $\|\nabla^2 F(x^t)\|\le U^F_H$, and the fact that each iteration of the Lanczos method requires only one matrix-vector product, one can observe that the number of Hessian-vector products required by each call of Algorithm~\ref{pro:meo} is also at most $\widetilde{\cO}(\min\{n,(U_H^F/\epsilon_H)^{1/2}\})$. Based on these observations and statement (iii) of Theorem \ref{thm:NCG-iter-oper-cmplxity}, we see that  statement (iv) of this theorem holds.
\end{proof}

\subsection{Proof of the main results in Section \ref{sec:AL-method}}\label{sec:pf-AL}


Recall from Assumption~\ref{asp:lowbd-knownfeas}(a) that $\|c(z_{\epsilon_1})\|\le\epsilon_1/2<1$. By virtue of this, \eqref{lbd} and the definition of $\tilde{c}$ in
\eqref{model:equa-cnstr-pert},
we obtain that
\begin{equation}\label{ineq:p-lowbd}
f(x)+\gamma\|\tilde{c}(x)\|^2 \ge f(x)+ \gamma\|c(x)\|^2/2 - \gamma \|c(z_{\epsilon_1})\|^2 \ge \fl - \gamma,\quad \forall x\in\bR^n.
\end{equation}
We now prove the following auxiliary lemma that will be used frequently later.

\begin{lemma}\label{tech-1}
Suppose that Assumption \ref{asp:lowbd-knownfeas} holds. Let $\gamma$, $\fh$ and $\fl$ be given in Assumption \ref{asp:lowbd-knownfeas}.  Assume that $\rho>2\gamma$, $\lambda\in\bR^m$, and $x\in\bR^n$ satisfy
\begin{equation}\label{AL-upperbound}
\widetilde{\cL}(x,\lambda;\rho)\le \fh,
\end{equation}
where $\widetilde{\cL}$ is defined in \eqref{tL-tc}.
Then the following statements hold.
\begin{enumerate}[{\rm (i)}]
\item $f(x)\le \fh+\|\lambda\|^2/(2\rho)$.
\item $\|\tilde{c}(x)\|\le\sqrt{2(\fh-\fl+\gamma)/(\rho-2\gamma)+\|\lambda\|^2/(\rho-2\gamma)^2}+\|\lambda\|/(\rho-2\gamma)$.
\item If $\rho\ge\|\lambda\|^2/(2\tdf)$ for some $\tdf>0$, then $f(x)\le \fh+\tdf$.
\item If
\begin{equation}\label{bound-rho-deltac}
\rho\ge 2(\fh-\fl+\gamma)\tdc^{-2}+ 2\|\lambda\|\tdc^{-1}+2\gamma
\end{equation}
for some $\tdc>0$, then $\|\tilde{c}(x)\|\le\tdc$.
\end{enumerate}
\end{lemma}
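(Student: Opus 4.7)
The plan is to prove all four parts in a direct, sequential manner, since (iii) and (iv) are clean corollaries of (i) and (ii).

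For part (i), I would start from the hypothesis $\widetilde{\cL}(x,\lambda;\rho)\le\fh$ and complete the square on the $\lambda$ and $\rho$ terms:
\[
\widetilde{\cL}(x,\lambda;\rho)=f(x)+\frac{\rho}{2}\left\|\tilde{c}(x)+\frac{\lambda}{\rho}\right\|^2-\frac{\|\lambda\|^2}{2\rho}.
\]
Dropping the (nonnegative) squared term and rearranging immediately yields $f(x)\le\fh+\|\lambda\|^2/(2\rho)$.

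For part (ii), I would combine the hypothesis with the global lower bound \eqref{ineq:p-lowbd}, i.e., $f(x)\ge \fl-\gamma-\gamma\|\tilde{c}(x)\|^2$, which substituted into $\widetilde{\cL}(x,\lambda;\rho)\le\fh$ and followed by Cauchy--Schwarz on $\lambda^T\tilde{c}(x)\ge-\|\lambda\|\|\tilde{c}(x)\|$ gives
\[
\Big(\frac{\rho}{2}-\gamma\Big)\|\tilde{c}(x)\|^2-\|\lambda\|\,\|\tilde{c}(x)\|-(\fh-\fl+\gamma)\le 0.
\]
Since $\rho>2\gamma$, this is a quadratic inequality in $t:=\|\tilde{c}(x)\|\ge 0$ with positive leading coefficient; solving it and taking the larger root yields exactly the bound stated in (ii).

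For part (iii), substituting $\rho\ge\|\lambda\|^2/(2\tdf)$ into the bound from (i) gives $f(x)\le\fh+\tdf$ at once. For part (iv), I would work from the quadratic inequality derived in the proof of (ii). Rewriting the hypothesis \eqref{bound-rho-deltac} as $(\rho/2-\gamma)\tdc^2-\|\lambda\|\tdc-(\fh-\fl+\gamma)\ge 0$, I can compare the quadratic $q(t):=(\rho/2-\gamma)t^2-\|\lambda\|t-(\fh-\fl+\gamma)$ (which is convex in $t$ and satisfies $q(\|\tilde{c}(x)\|)\le 0$) against $q(\tdc)\ge 0$; since the positive root of $q$ is an upper bound on the feasible $t$, we conclude $\|\tilde{c}(x)\|\le \tdc$.

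I do not expect any substantive obstacle; the argument is essentially a completion of squares together with a standard quadratic-inequality manipulation, and the only point requiring mild care is ensuring the sign condition $\rho>2\gamma$ is used correctly when dividing by $\rho/2-\gamma$ and when identifying the relevant root of the quadratic in parts (ii) and (iv).
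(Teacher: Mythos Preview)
Your proposal is correct and follows essentially the same route as the paper. The only cosmetic difference is in part (ii): the paper completes the square on the $(\rho-2\gamma)$ portion to bound $\|\tilde{c}(x)+\lambda/(\rho-2\gamma)\|$ and then applies the triangle inequality, whereas you use Cauchy--Schwarz and solve the resulting quadratic; these two manipulations are algebraically equivalent and yield the identical bound.
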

	
\begin{proof}
(i) It follows from
 \eqref{AL-upperbound} and the definition of $\widetilde{\cL}$ in \eqref{tL-tc}
that
\[
\textstyle
\fh\ge f(x)+\lambda^T \tilde{c}(x)+\frac{\rho}{2}\|\tilde{c}(x)\|^2 = f(x)+\frac{\rho}{2}\left\|\tilde{c}(x)+\frac{\lambda}{\rho}\right\|^2-\frac{\|\lambda\|^2}{2\rho} \ge f(x)-\frac{\|\lambda\|^2}{2\rho}.
\]
Hence, statement (i) holds.

(ii) In view of \eqref{ineq:p-lowbd} and \eqref{AL-upperbound}, one has
\begin{equation*}\label{ineq:bound-cx-step1}
\begin{array}{l}
\fh\overset{\eqref{AL-upperbound}}{\ge}f(x)+\lambda^T\tilde{c}(x)+\frac{\rho}{2}\|\tilde{c}(x)\|^2=f(x)+\gamma\|\tilde{c}(x)\|^2
+\frac{\rho-2\gamma}{2}\left\|\tilde{c}(x)+\frac{\lambda}{\rho-2\gamma}\right\|^2-\frac{\|\lambda\|^2}{2(\rho-2\gamma)}\\[2pt]
\ \ \ \ \overset{\eqref{ineq:p-lowbd}}{\ge}\fl- \gamma+\frac{\rho-2\gamma}{2}\left\|\tilde{c}(x)+\frac{\lambda}{\rho-2\gamma}\right\|^2-\frac{\|\lambda\|^2}{2(\rho-2\gamma)}.
\end{array}
\end{equation*}
It then follows that $\left\|\tilde{c}(x)+\frac{\lambda}{\rho-2\gamma}\right\|\le\sqrt{\frac{2(\fh-\fl+\gamma)}{\rho-2\gamma}+\frac{\|\lambda\|^2}{(\rho-2\gamma)^2}}$,  which implies that statement (ii) holds.

(iii) Statement (iii) immediately follows from statement (i) and $\rho\ge\|\lambda\|^2/(2\tdf)$.

(iv) Suppose that \eqref{bound-rho-deltac} holds. Multiplying both sides of \eqref{bound-rho-deltac} by $\tdc^{2}$ and rearranging the terms, we have
\[
(\rho-2\gamma)\tdc^2-2\|\lambda\|\tdc-2(\fh-\fl+\gamma)\ge0.
\]
Recall that $\rho>2\gamma$ and $\tdc>0$. Solving this inequality for $\tdc$ yields 
\[
\tdc\ge\sqrt{2(\fh-\fl+\gamma)/(\rho-2\gamma)+\|\lambda\|^2/(\rho-2\gamma)^2}+\|\lambda\|/(\rho-2\gamma),
\]
which along with statement (ii) implies that $\|\tilde{c}(x)\|\le\tdc$. Hence, statement (iv) holds.
\end{proof}


\begin{proof}[Proof of Lemma \ref{lem:level-set-augmented-lagrangian-func}]
(i) Let $x$ be any point such that $\widetilde{\cL}(x,\lambda^k;\rho_k)\le \widetilde{\cL}(x^{k}_{\init},\lambda^k;\rho_k)$. It then follows from \eqref{L-xinit} that $\widetilde{\cL}(x,\lambda^k;\rho_k) \le \fh$. By this, $\|\lambda^k\|\le\Lambda$, $\rho_k \ge \rho_0>2\gamma$, $\delta_{f,1} \le \bdelta_f$, $\delta_{c,1} \le \bdelta_c$, and Lemma \ref{tech-1} with $(\lambda,\rho)=(\lambda^k,\rho_k)$,  one has 
\begin{align}
f(x) & \le \fh+ \|\lambda^k\|^2/(2\rho_k)\le \fh+ \Lambda^2/(2\rho_0)=\fh+\delta_{f,1} \le \fh+\bdelta_f, \nonumber \\
\textstyle\|\tilde{c}(x)\|&\textstyle\le\sqrt{\frac{2(\fh-\fl+\gamma)}{\rho_k-2\gamma}+\frac{\|\lambda^k\|^2}{(\rho_k-2\gamma)^2}}+\frac{\|\lambda^k\|}{\rho_k-2\gamma}\le\sqrt{\frac{2(\fh-\fl+\gamma)}{\rho_0-2\gamma}+\frac{\Lambda^2}{(\rho_0-2\gamma)^2}}+\frac{\Lambda}{\rho_0-2\gamma}=\delta_{c,1} \le \bdelta_c. \label{ineq:feasible-bd}
\end{align}
Also, recall from the definition of $\tilde{c}$ in \eqref{model:equa-cnstr-pert}
and $\|c(z_{\epsilon_1})\|\le1$ that $\|c(x)\|\le1+\|\tilde{c}(x)\|$. This together with the above inequalities and \eqref{nearly-feas-level-set} implies $x\in\cS(\delta_{f},\delta_{c})$. Hence, statement (i) of Lemma \ref{lem:level-set-augmented-lagrangian-func} holds.

(ii) Note that $\inf\limits_{x\in\bR^n} \widetilde{\cL}(x,\lambda^k;\rho_k)=\inf\limits_{x\in\bR^n}\{\widetilde{\cL}(x,\lambda^k;\rho_k):\widetilde{\cL}(x,\lambda^k;\rho_k) \le \widetilde{\cL}(x^{k}_{\init},\lambda^k;\rho_k)\}$. Consequently, to prove statement (ii) of Lemma \ref{lem:level-set-augmented-lagrangian-func}, it suffices to show that
\begin{equation} \label{L-lwbd}
\inf\limits_{x\in\bR^n}\{\widetilde{\cL}(x,\lambda^k;\rho_k): \widetilde{\cL}(x,\lambda^k;\rho_k) \le \widetilde{\cL}(x^{k}_{\init},\lambda^k;\rho_k)\} \ge\fl - \gamma-\Lambda\bdelta_c.
\end{equation}
To this end, let $x$ be any point satisfying $\widetilde{\cL}(x,\lambda^k;\rho_k) \le\widetilde{\cL}(x^{k}_{\init},\lambda^k;\rho_k)$. We then know from \eqref{ineq:feasible-bd} that $\|\tilde{c}(x)\|\le\bdelta_c$. By this, $\|\lambda^k\|\le\Lambda$, $\rho_k>2\gamma$, and \eqref{ineq:p-lowbd}, one has
\begin{equation*}\label{ineq:lowerbound-ALfunc} 
\begin{array}{l}
\widetilde{\cL}(x,\lambda^k;\rho_k) 
=f(x)+\gamma\|\tilde{c}(x)\|^2+(\lambda^k)^T\tilde{c}(x)+\frac{\rho_k-2\gamma}{2}\|\tilde{c}(x)\|^2\\[4pt]
\ge f(x)+\gamma\|\tilde{c}(x)\|^2 - \Lambda\|\tilde{c}(x)\| \ge\fl - \gamma-\Lambda\bdelta_c,
\end{array}
\end{equation*}
and hence \eqref{L-lwbd} holds as desired.
\end{proof}


\begin{proof}[Proof of Theorem \ref{thm:output-alg1}]
Suppose that Algorithm \ref{alg:2nd-order-AL-nonconvex} terminates at some iteration $k$, that is, $\tau_k^g\le\epsilon_1$, $\tau_k^H\le\epsilon_2$, and $\|c(x^{k+1})\|\le\epsilon_1$ hold. Then, by $\tau_k^g\le\epsilon_1$, $\tilde{\lambda}^{k+1}=\lambda^k+\rho_k\tilde{c}(x^{k+1})$, $\nabla \tilde{c}=\nabla c$ and the second relation in \eqref{algstop:1st-order}, one has 
 \begin{align*}
\|\nabla f(x^{k+1})+\nabla c(x^{k+1})\tilde{\lambda}^{k+1}\| &=\|\nabla f(x^{k+1})+\nabla \tilde{c}(x^{k+1})(\lambda^k+\rho_k \tilde{c}(x^{k+1}))\| \\ 
&=\|\nabla_x\widetilde{\cL}(x^{k+1},\lambda^k;\rho_k)\|\le \tau_k^g \le \epsilon_1.
\end{align*} 
 Hence, $(x^{k+1},\tilde{\lambda}^{k+1})$ satisfies the first relation in \eqref{optcond:1st-equa-cnstr}. In addition, by \eqref{algstop:2nd-order} and $\tau_k^H\le\epsilon_2$, one can show that $\lambda_{\min}(\nabla^2_{xx}\widetilde{\cL}(x^{k+1},\lambda^k;\rho_k))\ge-\epsilon_2$ with probability at least $1-\delta$, which leads to $d^T\nabla^2_{xx}\widetilde{\cL}(x^{k+1},\lambda^k;\rho_k)d \ge -\epsilon_2\|d\|^2$ for all $d\in\bR^n$ with probability at least $1-\delta$.  Using this, $\tilde{\lambda}^{k+1}=\lambda^k+\rho_k\tilde{c}(x^{k+1})$, $\nabla \tilde{c} = \nabla c$, and $\nabla^2 \tilde{c}_i=\nabla^2 c_i$ for $1\le i\le m$, we see that with probability at least $1-\delta$, it holds that 
\[
d^T\left(\nabla^2 f(x^{k+1})+\sum_{i=1}^m\tilde{\lambda}_i^{k+1}\nabla^2c_i(x^{k+1})+\rho_k\nabla c(x^{k+1})\nabla c(x^{k+1})^T\right)d\ge -\epsilon_2\|d\|^2 \ \  \forall d\in\bR^n,
\]
which implies $d^T(\nabla^2 f(x^{k+1})+\sum_{i=1}^m\tilde{\lambda}_i^{k+1}\nabla^2 c_i(x^{k+1}))d\ge -\epsilon_2\|d\|^2$ for all $d\in \cC(x^{k+1})$, where $\cC(\cdot)$ is defined in \eqref{def:critical-cone}. Hence, $(x^{k+1},\tilde{\lambda}^{k+1})$ satisfies \eqref{optcond:2nd-equa-cnstr} with probability at least $1-\delta$. Combining these with $\|c(x^{k+1})\|\le\epsilon_1$, we conclude that $x^{k+1}$ is a deterministic $\epsilon_1$-FOSP of \eqref{model:equa-cnstr} and an $(\epsilon_1,\epsilon_2)$-SOSP of \eqref{model:equa-cnstr} with probability at least $1-\delta$. Hence, Theorem \ref{thm:output-alg1} holds.
\end{proof}


\begin{proof}[Proof of Theorem \ref{thm:out-itr-cmplxity-1}]
It follows from \eqref{def:delta0c-rhobar1} that ${\rho}_{\epsilon_1}\ge 2\rho_0$.
By this, one has
\begin{equation}\label{T-epsilong}
K_{\epsilon_1} \ \overset{\eqref{T-epsilon-g}}{=} \ \left\lceil\log\epsilon_1/\log \omega_1 \right\rceil \ \overset{\eqref{omega-tolerance}}{=} \ \left\lceil\log{2}/\log{r}\right\rceil  \
\le \ \log({\rho}_{\epsilon_1}\rho_0^{-1})/\log{r}+1.
\end{equation}
Notice that $\{\rho_k\}$ is either unchanged or increased by a ratio $r$ as $k$ increases.
By this fact and \eqref{T-epsilong}, we see that
\begin{equation}\label{rho:upper-bound-Tepsilong}
\max_{0 \le k \le K_{\epsilon_1}} \rho_k \le{r}^{K_{\epsilon_1}}\rho_0\overset{\eqref{T-epsilong}}{\le}{r}^{\frac{\log({\rho}_{\epsilon_1}\rho_0^{-1})}{\log{r}}+1}\rho_0={r}{\rho}_{\epsilon_1}.
\end{equation}
In addition, notice that $\rho_k>2\gamma$ and $\|\lambda^k\|\le \Lambda$. Using these, \eqref{hbd}, the first relation in \eqref{algstop:1st-order}, and Lemma \ref{tech-1}(ii) with $(x,\lambda,\rho)=(x^{k+1},\lambda^k,\rho_k)$,  we obtain that
\begin{equation}\label{bd-ck}\textstyle
\|\tilde{c}(x^{k+1})\| \le\sqrt{\frac{2(\fh-\fl+\gamma)}{\rho_k-2\gamma}+\frac{\|\lambda^k\|^2}{(\rho_k-2\gamma)^2}}+\frac{\|\lambda^k\|}{\rho_k-2\gamma}\le  \sqrt{\frac{2(\fh-\fl+\gamma)}{\rho_k-2\gamma}+\frac{\Lambda^2}{(\rho_k-2\gamma)^2}}+\frac{\Lambda}{\rho_k-2\gamma}.
\end{equation}
Also, we observe from $\|c(z_{\epsilon_1})\|\le\epsilon_1/2$ and the definition of $\tilde{c}$ in
\eqref{model:equa-cnstr-pert} that
\begin{equation}\label{bd-cxk}
\|c(x^{k+1})\|\le\|\tilde{c}(x^{k+1})\|+\|c(z_{\epsilon_1})\|\le \|\tilde{c}(x^{k+1})\|+\epsilon_1/2.
\end{equation}

We now prove that $\overline{K}_{\epsilon_1}$ is finite. Suppose for contradiction that $\overline{K}_{\epsilon_1}$ is infinite. It then follows from this and \eqref{number-outer-iteration} that $\|c(x^{k+1})\|>\epsilon_1$ for all $k\ge K_{\epsilon_1}$, which along with \eqref{bd-cxk} implies that $\|\tilde{c}(x^{k+1})\|>\epsilon_1/2$ for all $k\ge K_{\epsilon_1}$. It then follows that $\|\tilde{c}(x^{k+1})\|>\alpha\|\tilde{c}(x^k)\|$ must hold for infinitely many $k$'s. Using this and the update scheme on $\{\rho_k\}$, we deduce that $\rho_{k+1}={r}\rho_k$ holds for infinitely many $k$'s, which together with the monotonicity of $\{\rho_k\}$ implies that
$\rho_k\to\infty$ as $k\to\infty$. By this and \eqref{bd-ck}, one can see that  $\|\tilde{c}(x^{k+1})\|\to0$ as $k\to\infty$, which contradicts the fact that $\|\tilde{c}(x^{k+1})\|>\epsilon_1/2$ holds for all $k\ge K_{\epsilon_1}$. Hence, $\overline{K}_{\epsilon_1}$ is finite.
In addition, notice from \eqref{omega-tolerance}, \eqref{T-epsilon-g} and \eqref{epsw1-epsw2} that $(\tau_k^g,\tau_k^H)=(\epsilon_1,\epsilon_2)$ for all $k\ge K_{\epsilon_1}$. This along with the termination criterion of Algorithm \ref{alg:2nd-order-AL-nonconvex}  and the definition of $\overline{K}_{\epsilon_1}$ implies that Algorithm \ref{alg:2nd-order-AL-nonconvex} must terminate at iteration $\overline{K}_{\epsilon_1}$.

We next show that \eqref{outer-iteration-cmplxty} and $\rho_k\le{r}{\rho}_{\epsilon_1}$ hold for $0 \le k \le \overline{K}_{\epsilon_1}$ by considering two separate cases below.

Case 1) $\|c(x^{K_{\epsilon_1}+1})\|\le\epsilon_1$.  By this and \eqref{number-outer-iteration}, one can see that  $\overline{K}_{\epsilon_1}=K_{\epsilon_1}$, which together with \eqref{T-epsilong} and \eqref{rho:upper-bound-Tepsilong} implies that \eqref{outer-iteration-cmplxty} and $\rho_k\le{r}{\rho}_{\epsilon_1}$ hold for $0 \le k \le \overline{K}_{\epsilon_1}$.

Case 2) $\|c(x^{K_{\epsilon_1}+1})\|>\epsilon_1$. By this and \eqref{number-outer-iteration}, one can observe that $\overline{K}_{\epsilon_1}>K_{\epsilon_1}$  and also $\|c(x^{k+1})\|>\epsilon_1$ for all $K_{\epsilon_1} \le k \le \overline{K}_{\epsilon_1}-1$, which together with \eqref{bd-cxk} implies
\begin{equation}\label{lbd-tc}
\|\tilde{c}(x^{k+1})\|>\epsilon_1/2,\quad \forall K_{\epsilon_1} \le k \le \overline{K}_{\epsilon_1}-1.
\end{equation}
It then follows from $\|\lambda^k\|\le\Lambda$, \eqref{hbd}, the first relation in \eqref{algstop:1st-order}, and Lemma \ref{tech-1}(iv) with $(x,\lambda,\rho,\tdc)=(x^{k+1},\lambda^k,\rho_k,\epsilon_1/2)$ that
\begin{equation}\label{bound-rhok}
\begin{aligned}
\rho_k &< 8(\fh-\fl+\gamma)\epsilon_1^{-2}+4\|\lambda^k\|\epsilon_1^{-1}+2\gamma\\
&\le 8(\fh-\fl+\gamma)\epsilon_1^{-2}+4\Lambda\epsilon_1^{-1}+2\gamma\overset{\eqref{def:delta0c-rhobar1}}{\le}{\rho}_{\epsilon_1},\quad
\forall K_{\epsilon_1} \le k \le \overline{K}_{\epsilon_1}-1.
\end{aligned}
\end{equation}
Combining this relation, \eqref{rho:upper-bound-Tepsilong}, and the fact $\rho_{\overline{K}_{\epsilon_1}}\le {r}\rho_{\overline{K}_{\epsilon_1}-1}$, we conclude that  $\rho_k\le{r}{\rho}_{\epsilon_1}$ holds for $0 \le k\le \overline{K}_{\epsilon_1}$. It remains to
show that \eqref{outer-iteration-cmplxty} holds.  To this end, let 
\[
\bK =\{k:\rho_{k+1}={r}\rho_{k}, K_{\epsilon_1}\le k\le \overline{K}_{\epsilon_1}-2\}.
\]
It follows from \eqref{bound-rhok} and the update scheme of $\rho_k$ that 
\[
{r}^{|\bK|}\rho_{K_{\epsilon_1}}=\max_{K_{\epsilon_1}\le k \le \overline{K}_{\epsilon_1}-1}\{\rho_k\}\le {\rho}_{\epsilon_1},
\]
which together with $\rho_{K_{\epsilon_1}}\ge\rho_0$ implies that
\begin{equation}\label{bound-cU}
|\bK|\le \log({\rho}_{\epsilon_1}\rho_{K_{\epsilon_1}}^{-1})/ \log{r} \le \log({\rho}_{\epsilon_1}\rho_0^{-1})/ \log{r}.
\end{equation}
Let $\{k_1,k_2,\ldots,k_{|\bK|}\}$ denote all the elements of $\mathcal{\bK}$ arranged in ascending order, and let $k_0=K_{\epsilon_1}$ and $k_{|\bK|+1}=\overline{K}_{\epsilon_1}-1$. We next derive an upper bound for $k_{j+1}-k_{j}$ for $j=0,1,\ldots,|\bK|$. By the definition of $\bK$, one can observe that $\rho_{k}=\rho_{k'}$ for $k_j< k,k'\le k_{j+1}$. Using this and the update scheme of $\rho_k$, we deduce that
\begin{equation}\label{ineq:sequence-cxk}
\|\tilde{c}(x^{k+1})\|\le\alpha\|\tilde{c}(x^{k})\|,\quad \forall k_j<k< k_{j+1}.
\end{equation}
On the other hand, by \eqref{def:delta0c-rhobar1xxx}, \eqref{bd-ck} and $\rho_k\ge\rho_0$, one has $\|\tilde{c}(x^{k+1})\|\le \delta_{c,1}$ for $0 \le k \le \overline{K}_{\epsilon_1}$. By this and \eqref{lbd-tc}, one can see that
\begin{equation}\label{ineq:sequence-cxk2222}
\epsilon_1/2<\|\tilde{c}(x^{k+1})\|\le\delta_{c,1},\quad \forall K_{\epsilon_1} \le k \le \overline{K}_{\epsilon_1}-1.
\end{equation}
Now, note that either $k_{j+1} - k_j = 1$ or $k_{j+1} - k_j > 1$. In the latter case, we can apply \eqref{ineq:sequence-cxk} with $k = k_{j+1}-1,\ldots,k_j+1$ together with \eqref{ineq:sequence-cxk2222} to deduce that
\[
\epsilon_1/2<\|\tilde{c}(x^{k_{j+1}})\|\le\alpha \|\tilde{c}(x^{k_{j+1}-1})\|\le\cdots\le \alpha^{k_{j+1}-k_j-1}\|\tilde{c}(x^{k_{j}+1})\|\le \alpha^{k_{j+1}-k_j-1}\delta_{c,1},\quad \forall j=0,1,\ldots,|\bK|.
\]
Combining these two cases, we have 
\begin{equation}\label{ineq:subsequence-rho-same}
k_{j+1}-k_{j}\le |\log(\epsilon_1(2\delta_{c,1})^{-1}))/\log \alpha|+1,\quad \forall j=0,1,\ldots,|\bK|.
\end{equation}
Summing up these inequalities, and using \eqref{T-epsilong}, \eqref{bound-cU},  $k_0=K_{\epsilon_1}$ and $k_{|\bK|+1}=\overline{K}_{\epsilon_1}-1$, we have
\begin{eqnarray}
&\hspace{-.85in}\overline{K}_{\epsilon_1} = 1+k_{|\bK|+1}=1+k_0+\sum_{j=0}^{|\bK|}(k_{j+1}-k_{j}) \overset{\eqref{ineq:subsequence-rho-same}}{\le}1+K_{\epsilon_1}+(|\bK|+1)\left(\left|\frac{\log(\epsilon_1(2\delta_{c,1})^{-1})}{\log \alpha}\right|+1\right) \nonumber \\[6pt]
&\le2+\frac{\log({\rho}_{\epsilon_1}\rho_0^{-1})}{\log{r}}+\left(\frac{\log({\rho}_{\epsilon_1}\rho_0^{-1})}{\log{r}}+1\right)\left(\left|\frac{\log(\epsilon_1(2\delta_{c,1})^{-1})}{\log \alpha}\right|+1\right) =1+ \left(\frac{\log({\rho}_{\epsilon_1}\rho_0^{-1})}{\log{r}}+1\right)\left(\left|\frac{\log(\epsilon_1(2\delta_{c,1})^{-1})}{\log \alpha}\right|+2\right), \nn
\end{eqnarray}
where the second inequality is due to \eqref{T-epsilong} and \eqref{bound-cU}. Hence, \eqref{outer-iteration-cmplxty} also holds in this case.
\end{proof}

We next prove Theorem \ref{thm:total-iter-cmplxity}. Before proceeding, we introduce some notation that will be used shortly. Let $L_{k,H}$ denote the Lipschitz constant of $\nabla^2_{xx} \widetilde{\cL}(x,\lambda^k;\rho_k)$ on the convex open neighborhood $\Omega(\bdelta_{f},\bdelta_{c})$ of
$\cS(\bdelta_f,\bdelta_c)$, where $\cS(\bdelta_f,\bdelta_c)$ is defined in \eqref{nearly-feas-level-set}, and let $U_{k,H}=\sup_{x\in\cS(\bdelta_f,\bdelta_c)} \|\nabla^2_{xx} \widetilde{\cL}(x,\lambda^k;\rho_k)\|$. Notice from \eqref{model:equa-cnstr-pert} and \eqref{tL-tc} that
\begin{equation}\label{eq:hessian-augmented-Lagrangian}
\nabla^2_{xx}\widetilde{\cL}(x,\lambda^k;\rho_k) = \nabla^2 f(x) +\sum_{i=1}^m\lambda^k_{i}\nabla^2 c_i(x) + \rho_k \bigg(\nabla c(x)\nabla c(x)^T + \sum_{i=1}^m \tilde{c}_i(x)\nabla^2 c_i(x)\bigg).
\end{equation}
By this, $\|\lambda^k\| \le \Lambda$, the definition of $\tilde{c}$, and the Lipschitz continuity of $\nabla^2 f$ and $\nabla^2 c_i$'s (see Assumption~\ref{asp:lowbd-knownfeas}(c)), one can observe that there exist some constants ${L}_{1}$, ${L}_{2}$, ${U}_{1}$ and ${U}_{2}$, depending only on $f$, $c$, $\Lambda$, $\bdelta_{f}$ and $\bdelta_{c}$,  such that
\begin{equation}\label{ineq:bound-norm-hessian-Liphessian}
L_{k,H}\le {L}_{1}+\rho_k{L}_{2},\quad U_{k,H}\le{U}_{1}+\rho_k{U}_{2}.
\end{equation}


\begin{proof}[Proof of Theorem \ref{thm:total-iter-cmplxity}]
Let $T_k$ and $N_k$ denote the number of iterations and matrix-vector products performed by Algorithm~\ref{alg:NCG} at the outer iteration $k$ of Algorithm \ref{alg:2nd-order-AL-nonconvex}, respectively. It then follows from Theorem \ref{thm:out-itr-cmplxity-1} that the total number of iterations and matrix-vector products performed by Algorithm~\ref{alg:NCG} in Algorithm \ref{alg:2nd-order-AL-nonconvex} are $\sum_{k=0}^{\overline{K}_{\epsilon_1}}T_k$ and $\sum_{k=0}^{\overline{K}_{\epsilon_1}}N_k$, respectively. In addition, notice from \eqref{def:delta0c-rhobar1} and Theorem \ref{thm:out-itr-cmplxity-1} that ${\rho}_{\epsilon_1}=\cO(\epsilon_1^{-2})$ and  $\rho_k\le{r}{\rho}_{\epsilon_1}$, which yield $\rho_k=\cO(\epsilon_1^{-2})$.

We first claim that $(\tau_k^g)^2/\tau_k^H\ge\min\{\epsilon_1^2/\epsilon_2,\epsilon_2^3\}$ holds for any $k\ge0$. Indeed, let $\bar t=\log\epsilon_1/\log\omega_1$ and $\psi(t)=\max\{\epsilon_1,\omega_1^t\}^2/\max\{\epsilon_2,\omega_2^t\}$ for all $t\in\bR$. It then follows from \eqref{epsw1-epsw2} that $\omega_1^{\bar t}=\epsilon_1$ and $\omega_2^{\bar t}=\epsilon_2$. By this and $\omega_1,\omega_2\in(0,1)$, one can observe that  $\psi(t)=(\omega_1^2/\omega_2)^t$ if $t \leq \bar t$ and $\psi(t)=\epsilon_1^2/\epsilon_2$ otherwise. This along with $\epsilon_2\in(0,1)$ implies that
\[
\min_{t\in[0,\infty)}\psi(t) = \min\{\psi(0),\psi(\bar t)\}=\min\{1,\epsilon_1^2/\epsilon_2\} \geq \min\{\epsilon_1^2/\epsilon_2,\epsilon_2^3\},
\]
which together with \eqref{omega-tolerance} yields
$(\tau_k^g)^2/\tau_k^H=\psi(k) \geq \min\{\epsilon_1^2/\epsilon_2,\epsilon_2^3\}$ for all $k \geq 0$.

(i) From Lemma \ref{lem:level-set-augmented-lagrangian-func}(i) and the definitions of $\Omega(\bdelta_{f},\bdelta_{c})$ and $L_{k,H}$, we see that $L_{k,H}$ is a Lipschitz constant of $\nabla^2_{xx} \widetilde{\cL}(x,\lambda^k;\rho_k)$ on a convex open neighborhood  of $\{x:\widetilde{\cL}(x,\lambda^k;\rho_k)\le \widetilde{\cL}(x^{k}_{\init},\lambda^k;\rho_k)\}$.
Also, recall from Lemma \ref{lem:level-set-augmented-lagrangian-func}(ii) that $\inf_{x\in\bR^n} \widetilde{\cL}(x,\lambda^k;\rho_k) $ $\ge\fl-\gamma-\Lambda\bdelta_c$. By these, $\widetilde{\cL}(x^{k}_{\init},\lambda^k;\rho_k)\le \fh$ (see \eqref{L-xinit}) and Theorem \ref{thm:NCG-iter-oper-cmplxity}(iii) with $(\Fh,\Fl,L_{H}^F,\epsilon_g,\epsilon_H)=(\widetilde{\cL}(x^{k}_{\init},\lambda^k;\rho_k),\fl-\gamma-\Lambda \bdelta_c,L_{k,H},\tau_k^g,\tau_k^H)$, one has
\begin{equation}\label{eq:alg1-inner-iter-kthAL}
\begin{array}{rcl}
T_k&=&\cO((\fh-\fl+\gamma+\Lambda\bdelta_{c})L_{k,H}^2\max\{(\tau^g_k)^{-2}\tau^H_k,(\tau_k^H)^{-3}\})\\[2pt]
&\overset{\eqref{ineq:bound-norm-hessian-Liphessian}}{=}&\cO(\rho_k^2\max\{(\tau^g_k)^{-2}\tau^H_k,(\tau_k^H)^{-3}\})=\cO(\epsilon_1^{-4}\max\{\epsilon_1^{-2}\epsilon_2,\epsilon_2^{-3}\}),
\end{array}
\end{equation}
where the last equality is from $(\tau_k^g)^2/\tau_k^H\ge\min\{\epsilon_1^2/\epsilon_2,\epsilon_2^3\}$, $\tau_k^H\ge\epsilon_2$, and $\rho_k=\cO(\epsilon_1^{-2})$.

Next, if 
$c(x)=Ax-b$ for some $A\in\bR^{m\times n}$ and $b\in\bR^m$, then $\nabla c(x)=A^T$ and $\nabla^2 c_i(x)=0$ for $1 \le i \le m$. By these and \eqref{eq:hessian-augmented-Lagrangian}, one has $L_{k,H}=\cO(1)$. Using this and similar arguments as for \eqref{eq:alg1-inner-iter-kthAL}, we obtain that $T_k=\cO(\max\{\epsilon_1^{-2}\epsilon_2,\epsilon_2^{-3}\})$. By this, \eqref{eq:alg1-inner-iter-kthAL} and $\overline{K}_{\epsilon_1}=\cO(|\log\epsilon_1|^2)$ (see Remark \ref{order-out-itera-penalty}), we conclude that statement (i) of Theorem \ref{thm:total-iter-cmplxity} holds.

 (ii) In view of Lemma \ref{lem:level-set-augmented-lagrangian-func}(i) and the definition of
 $U_{k,H}$, one can see that 
\[
U_{k,H} \ge \sup_{x\in\bR^n}\{\|\nabla^2_{xx} \widetilde{\cL}(x,\lambda^k;\rho_k)\|: \widetilde{\cL}(x,\lambda^k;\rho_k)\le \widetilde{\cL}(x^{k}_{\init},\lambda^k;\rho_k)\}.
\] 
 Using this, $\widetilde{\cL}(x^{k}_{\init},\lambda^k;\rho_k)$ $\le \fh$ and Theorem \ref{thm:NCG-iter-oper-cmplxity}(iv) with $(\Fh,\Fl,L_{H}^F,U_{H}^F,\epsilon_g,\epsilon_H)=(\widetilde{\cL}(x^{k}_{\init},\lambda^k;\rho_k),
\fl-\gamma-\Lambda\bdelta_c,L_{k,H},U_{k,H},\tau_k^g,\tau_k^H)$, we obtain that
\begin{equation}\label{eq:alg1-oper-kthAL}
\begin{array}{rcl}
N_k&=&\widetilde{\cO}((\fh-\fl+\gamma+\Lambda\bdelta_c)L_{k,H}^2\max\{(\tau^g_k)^{-2}\tau^H_k,(\tau_k^H)^{-3}\}\min\{n,(U_{k,H}/\tau_k^H)^{1/2}\})\\[4pt]
&\overset{\eqref{ineq:bound-norm-hessian-Liphessian}}{=}&\widetilde{\cO}(\rho_k^2\max\{(\tau^g_k)^{-2}\tau^H_k,(\tau_k^H)^{-3}\}\min\{n,(\rho_k/\tau_k^H)^{1/2}\})\\[4pt]
&=&\widetilde{\cO}(\epsilon_1^{-4}\max\{\epsilon_1^{-2}\epsilon_2,\epsilon_2^{-3}\}\min\{n,\epsilon_1^{-1}\epsilon_2^{-1/2}\}),	
\end{array}
\end{equation}
where the last equality  is from $(\tau_k^g)^2/\tau_k^H\ge\min\{\epsilon_1^2/\epsilon_2,\epsilon_2^3\}$, $\tau_k^H\ge\epsilon_2$, and $\rho_k=\cO(\epsilon_1^{-2})$.

On the other hand, if $c$ is assumed to be affine, it follows from the above discussion that $L_{k,H}=\cO(1)$. Using this, $U_{k,H}\le U_1+\rho_k U_2$, and similar arguments as for \eqref{eq:alg1-oper-kthAL}, we obtain that $N_k=\widetilde{\cO}(\max\{\epsilon_1^{-2}\epsilon_2,\epsilon_2^{-3}\}\min\{n,\epsilon_1^{-1}\epsilon_2^{-1/2}\})$. By this, \eqref{eq:alg1-oper-kthAL} and $\overline{K}_{\epsilon_1}=\cO(|\log\epsilon_1|^2)$ (see Remark \ref{order-out-itera-penalty}), we conclude that statement (ii) of Theorem \ref{thm:total-iter-cmplxity} holds.
\end{proof}

Next, we provide a proof of Theorem \ref{thm:total-iter-cmplxity2}. To proceed,
we first observe from Assumptions~\ref{asp:lowbd-knownfeas}(c) and \ref{asp:LICQ} that there exist ${U^f_g}>0$, $U^c_g>0$ and $\sigma>0$ such that
\begin{equation}\label{bound:fx-sigma-cx}
\|\nabla f(x)\|\le{U^f_g},\quad \|\nabla c(x)\|\le U^c_g,\quad \lambda_{\min}(\nabla c(x)^T \nabla c(x))\ge\sigma^2,\quad \forall x\in\cS(\bdelta_f,\bdelta_c).
\end{equation}
We next establish several technical lemmas that will be used shortly.

\begin{lemma}\label{lem:multiplier-update-without-proj}
Suppose that Assumptions \ref{asp:lowbd-knownfeas} and \ref{asp:LICQ} hold and that $\rho_0$ is sufficiently large
such that $\delta_{f,1} \le \bdelta_f$ and  $\delta_{c,1} \le \bdelta_c$, where $\delta_{f,1}$ and $\delta_{c,1}$ are defined in \eqref{def:delta0c-rhobar1xxx}.
Let $\{(x^k,\lambda^k,\rho_k)\}$ be generated by Algorithm \ref{alg:2nd-order-AL-nonconvex}. Suppose that
\begin{equation}\label{rho-bd2}
\rho_k\ge\max\{\Lambda^2(2\bdelta_f)^{-1},2(\fh-\fl+\gamma)\bdelta_c^{-2}+2\Lambda\bdelta_c^{-1}+2\gamma,2(U_g^f+U_g^c\Lambda+1)(\sigma\epsilon_1)^{-1}\}
\end{equation}
for some $k\ge0$, where $\gamma$, $\fh$, $\fl$, $\delta_f$ and $\delta_c$ are given in Assumption \ref{asp:lowbd-knownfeas}, and ${U^f_g}$, $U_g^c$ and $\sigma$ are given in \eqref{bound:fx-sigma-cx}. Then it holds that $\|c(x^{k+1})\|\le\epsilon_1$.
\end{lemma}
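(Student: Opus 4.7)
The plan is to reduce the claim $\|c(x^{k+1})\| \le \epsilon_1$ to a bound on the perturbed constraint $\tilde{c}(x^{k+1})$. Since $c(x^{k+1}) = \tilde{c}(x^{k+1}) + c(z_{\epsilon_1})$ by the definition in \eqref{model:equa-cnstr-pert}, and $\|c(z_{\epsilon_1})\| \le \epsilon_1/2$ by Assumption~\ref{asp:lowbd-knownfeas}(a), the triangle inequality yields $\|c(x^{k+1})\| \le \|\tilde{c}(x^{k+1})\| + \epsilon_1/2$. Thus it suffices to prove $\|\tilde{c}(x^{k+1})\| \le \epsilon_1/2$. I would obtain this bound by combining the approximate first-order condition satisfied by $x^{k+1}$ with the GLICQ-induced lower bound on the smallest singular value of $\nabla c(x^{k+1})$, and then invoking the hypothesis on $\rho_k$.

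First I would confirm that $x^{k+1}\in\cS(\bdelta_f,\bdelta_c)$, so that the bounds in \eqref{bound:fx-sigma-cx} are available at $x^{k+1}$. By \eqref{algstop:1st-order} together with the descent property of the Newton-CG method in step~\ref{algstp:ALsubpb} (whose starting point is $x^k_{\init}$), we have $\widetilde{\cL}(x^{k+1},\lambda^k;\rho_k)\le \widetilde{\cL}(x^k_{\init},\lambda^k;\rho_k)$; the hypotheses $\delta_{f,1}\le\bdelta_f$ and $\delta_{c,1}\le\bdelta_c$ then allow Lemma~\ref{lem:level-set-augmented-lagrangian-func}(i) to place $x^{k+1}$ in $\cS(\bdelta_f,\bdelta_c)$. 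Consequently \eqref{bound:fx-sigma-cx} supplies $\|\nabla f(x^{k+1})\|\le U_g^f$, $\|\nabla c(x^{k+1})\|\le U_g^c$, and, crucially, the GLICQ inequality $\|\nabla c(x^{k+1})v\|\ge\sigma\|v\|$ for every $v\in\bR^m$.

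Expanding the identity
\[
\nabla_x \widetilde{\cL}(x^{k+1},\lambda^k;\rho_k)=\nabla f(x^{k+1})+\nabla c(x^{k+1})\lambda^k+\rho_k\nabla c(x^{k+1})\tilde{c}(x^{k+1}),
\]
the second relation in \eqref{algstop:1st-order} together with $\|\lambda^k\|\le\Lambda$ and the triangle inequality gives
\[
\rho_k\|\nabla c(x^{k+1})\tilde{c}(x^{k+1})\|\le \tau_k^g+U_g^f+U_g^c\Lambda.
\]
Applying the GLICQ bound with $v=\tilde{c}(x^{k+1})$ then converts this into $\rho_k\sigma\|\tilde{c}(x^{k+1})\|\le \tau_k^g+U_g^f+U_g^c\Lambda$. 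Since $\epsilon_1\in(0,1)$ and $r^{k\log\epsilon_1/\log 2}\le 1$ for all $k\ge0$, one has $\tau_k^g\le 1$, so the hypothesized lower bound $\rho_k\ge 2(U_g^f+U_g^c\Lambda+1)/(\sigma\epsilon_1)$ forces $\|\tilde{c}(x^{k+1})\|\le\epsilon_1/2$, and the lemma follows from the triangle inequality established in the first paragraph.

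The main obstacle, modest as it is, lies in verifying that the GLICQ lower bound actually applies at $x^{k+1}$; this is exactly what the first two entries of the maximum in \eqref{rho-bd2} buy (and indeed they are already implied by $\rho_k\ge\rho_0$ under the standing hypotheses $\delta_{f,1}\le\bdelta_f$ and $\delta_{c,1}\le\bdelta_c$, via Lemma~\ref{lem:level-set-augmented-lagrangian-func}(i)). Once $x^{k+1}$ is known to lie in $\cS(\bdelta_f,\bdelta_c)$, the rest is a one-line algebraic estimate whose three additive ingredients ($\tau_k^g\le 1$, $\|\nabla f\|\le U_g^f$, $\|\nabla c\|\|\lambda^k\|\le U_g^c\Lambda$) line up precisely with the three terms $1+U_g^f+U_g^c\Lambda$ appearing in the third clause of \eqref{rho-bd2}.
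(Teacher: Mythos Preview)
Your proposal is correct and follows essentially the same approach as the paper: reduce to bounding $\|\tilde c(x^{k+1})\|$, confirm $x^{k+1}\in\cS(\bdelta_f,\bdelta_c)$ so that \eqref{bound:fx-sigma-cx} applies, rearrange the AL gradient identity to bound $\rho_k\|\nabla c(x^{k+1})\tilde c(x^{k+1})\|$, and invoke the singular-value lower bound from GLICQ. The only minor variation is how membership in $\cS(\bdelta_f,\bdelta_c)$ is verified: the paper uses the first two clauses of \eqref{rho-bd2} together with Lemma~\ref{tech-1}(iii),(iv), whereas you use the descent property and Lemma~\ref{lem:level-set-augmented-lagrangian-func}(i) under the standing hypotheses on $\rho_0$---as you correctly note, this makes those two clauses redundant for the argument.
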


\begin{proof}
By \eqref{rho-bd2} and $\|\lambda^k\|\le\Lambda$ (see step \ref{algstep:proj-multiplier} of Algorithm \ref{alg:2nd-order-AL-nonconvex}), one can see that
\[
\rho_k\ge\max\{\|\lambda^k\|^2(2\bdelta_f)^{-1},2(\fh-\fl+\gamma)\bdelta_c^{-2}+2\|\lambda^k\|\bdelta_c^{-1}+2\gamma\}.
\]
 Using this, \eqref{hbd}, the first relation in \eqref{algstop:1st-order}, and Lemma \ref{tech-1}(iii) and (iv) with $(x,\lambda,\rho,\tdf,\tdc)=(x^{k+1},\lambda^k,\rho_k,\bdelta_f,\bdelta_c)$,  we obtain that $f(x^{k+1})\le \fh+\bdelta_f$ and $\|\tilde{c}(x^{k+1})\|\le\bdelta_c$. In addition, recall from $\|c(z_{\epsilon_1})\|\le1$ and the definition of $\tilde{c}$ in
\eqref{model:equa-cnstr-pert}
that $\|c(x^{k+1})\|\le1+\|\tilde{c}(x^{k+1})\|$. These together with \eqref{nearly-feas-level-set} show that $x^{k+1}\in\cS(\delta_f,\delta_c)$. It then follows from \eqref{bound:fx-sigma-cx} that $\|\nabla f(x^{k+1})\|\le{U^f_g}$, $\|\nabla c(x^{k+1})\|\le U^c_g$, and $\lambda_{\min}(\nabla c(x^{k+1})^T\nabla c(x^{k+1}))\ge\sigma^2$. By $\|\nabla f(x^{k+1})\|\le{U^f_g}$, $\|\nabla c(x^{k+1})\|\le U^c_g$, $\tau_k^g\le1$, $\|\lambda^k\|\le\Lambda$,
\eqref{model:equa-cnstr-pert}
and \eqref{algstop:1st-order}, one has
\begin{align}
&\rho_k\|\nabla c(x^{k+1})\tilde{c}(x^{k+1})\| \le\|\nabla f(x^{k+1})+\nabla c(x^{k+1})\lambda^k\| + \|\nabla_x \widetilde{\cL}(x^{k+1},\lambda^k;\rho_k)\|\nn\\
&\overset{\eqref{algstop:1st-order}}{\le} \|\nabla f(x^{k+1})\|+\|\nabla c(x^{k+1})\|\|\lambda^k\|+\tau_k^g \le U^f_g + U_g^c\Lambda +1. \label{grad-bound}
\end{align}
In addition, note that $\lambda_{\min}(\nabla c(x^{k+1})^T\nabla c(x^{k+1}))\ge\sigma^2$ implies that $\nabla c(x^{k+1})^T \nabla c(x^{k+1})$ is invertible.
Using this fact and \eqref{grad-bound}, we obtain
\begin{align}\label{ineq:1st-order-lambda-bound}
\|\tilde{c}(x^{k+1})\| &\le\|(\nabla c(x^{k+1})^T \nabla c(x^{k+1}))^{-1}\nabla c(x^{k+1})^T \|\|\nabla c(x^{k+1})\tilde{c}(x^{k+1})\|\nonumber \\
&= \lambda_{\min}(\nabla c(x^{k+1})^T\nabla c(x^{k+1}))^{-\frac12}\|\nabla c(x^{k+1})\tilde{c}(x^{k+1})\| \overset{\eqref{grad-bound}}{\le} (U^f_g + U_g^c\Lambda +1)/(\sigma\rho_k).
\end{align}
We also observe from \eqref{rho-bd2} that $\rho_k\ge2({U^f_g}+U_g^c\Lambda +1)(\sigma\epsilon_1)^{-1}$, which along with \eqref{ineq:1st-order-lambda-bound} proves $\|\tilde{c}(x^{k+1})\|\le\epsilon_1/2$. Combining this with the definition of $\tilde{c}$ in \eqref{model:equa-cnstr-pert}
and $\|c(z_{\epsilon_1})\|\le\epsilon_1/2$, we conclude that $\|c(x^{k+1})\|\le\epsilon_1$ holds as desired.
\end{proof}

The next lemma provides a stronger upper bound for $\{\rho_k\}$ than the one in Theorem \ref{thm:out-itr-cmplxity-1}.

\begin{lemma}\label{cor:improved-outer-iteration-cmplxty}
Suppose that Assumptions \ref{asp:lowbd-knownfeas} and \ref{asp:LICQ} hold and that $\rho_0$ is sufficiently large such that $\delta_{f,1} \le \bdelta_f$ and  $\delta_{c,1} \le \bdelta_c$, where $\delta_{f,1}$ and $\delta_{c,1}$ are defined in \eqref{def:delta0c-rhobar1xxx}. Let $\{\rho_k\}$ be generated by Algorithm \ref{alg:2nd-order-AL-nonconvex} and
\begin{equation}\label{bound-barrho2}
\tilde{\rho}_{\epsilon_1}:=\max\{\Lambda^2(2\bdelta_f)^{-1},2(\fh-\fl+\gamma)\bdelta_c^{-2}
+2\Lambda\bdelta_c^{-1}+2\gamma,2(U_g^f+U_g^c\Lambda+1)(\sigma\epsilon_1)^{-1},2\rho_0\},
\end{equation}
where $\gamma$, $\fh$, $\fl$, $\delta_f$ and $\delta_c$ are given in Assumption \ref{asp:lowbd-knownfeas},  and ${U^f_g}$, $U_g^c$ and $\sigma$ are given in \eqref{bound:fx-sigma-cx}. Then $\rho_k\le{r}\tilde{\rho}_{\epsilon_1}$ holds for $0 \le k\le \overline{K}_{\epsilon_1}$, where $\overline{K}_{\epsilon_1}$ is defined in \eqref{number-outer-iteration}.
\end{lemma}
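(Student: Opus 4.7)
The plan is to mirror the argument used to bound $\{\rho_k\}$ in the proof of Theorem \ref{thm:out-itr-cmplxity-1}, but to substitute the application of Lemma \ref{tech-1}(iv) by the sharper Lemma \ref{lem:multiplier-update-without-proj}, which under the GLICQ Assumption \ref{asp:LICQ} directly delivers $\|c(x^{k+1})\|\le\epsilon_1$ once $\rho_k$ exceeds the threshold in \eqref{rho-bd2}. Observe that $\tilde{\rho}_{\epsilon_1}$ defined in \eqref{bound-barrho2} is exactly the maximum of the three quantities appearing in \eqref{rho-bd2}, augmented by the safeguard $2\rho_0$; hence $\rho_k\ge\tilde{\rho}_{\epsilon_1}$ implies the hypothesis \eqref{rho-bd2} of Lemma \ref{lem:multiplier-update-without-proj}, making that lemma immediately applicable.

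First I would handle the early range $0\le k\le K_{\epsilon_1}$. Since the update rule in step~\ref{altstep:penalty} enforces $\rho_{k+1}\le r\rho_k$, telescoping yields $\rho_{K_{\epsilon_1}}\le r^{K_{\epsilon_1}}\rho_0$. Combining $K_{\epsilon_1}=\lceil\log 2/\log r\rceil\le \log 2/\log r+1$ (from \eqref{T-epsilon-g} and \eqref{epsw1-epsw2}) with $\tilde{\rho}_{\epsilon_1}\ge 2\rho_0$, one has $r^{K_{\epsilon_1}}\le r\cdot \tilde{\rho}_{\epsilon_1}/\rho_0$, whence $\rho_{K_{\epsilon_1}}\le r\tilde{\rho}_{\epsilon_1}$. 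Monotonicity of $\{\rho_k\}$ then gives $\rho_k\le r\tilde{\rho}_{\epsilon_1}$ for all $0\le k\le K_{\epsilon_1}$. This step also settles the degenerate situation $\overline{K}_{\epsilon_1}=K_{\epsilon_1}$.

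Next, for the range $K_{\epsilon_1}\le k\le \overline{K}_{\epsilon_1}-1$ (assuming it is nonempty), the definition \eqref{number-outer-iteration} of $\overline{K}_{\epsilon_1}$ forces $\|c(x^{k+1})\|>\epsilon_1$. I would argue by contradiction: if $\rho_k\ge\tilde{\rho}_{\epsilon_1}$ for some such $k$, then Lemma \ref{lem:multiplier-update-without-proj} applies and yields $\|c(x^{k+1})\|\le\epsilon_1$, contradicting the above. Hence $\rho_k<\tilde{\rho}_{\epsilon_1}$ throughout $K_{\epsilon_1}\le k\le \overline{K}_{\epsilon_1}-1$, and in particular $\rho_{\overline{K}_{\epsilon_1}}\le r\rho_{\overline{K}_{\epsilon_1}-1}< r\tilde{\rho}_{\epsilon_1}$. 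Together with the first step this yields $\rho_k\le r\tilde{\rho}_{\epsilon_1}$ for all $0\le k\le \overline{K}_{\epsilon_1}$, completing the proof. There is no substantive obstacle here; the only subtlety is the bookkeeping between $K_{\epsilon_1}$, the first index where $\rho_k$ could potentially cross $\tilde{\rho}_{\epsilon_1}$, and $\overline{K}_{\epsilon_1}$, which is handled cleanly by the contradiction above.
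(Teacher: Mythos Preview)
Your proposal is correct and follows essentially the same approach as the paper: bound $\rho_k$ on $0\le k\le K_{\epsilon_1}$ via $\rho_{K_{\epsilon_1}}\le r^{K_{\epsilon_1}}\rho_0$ together with $K_{\epsilon_1}\le \log(\tilde{\rho}_{\epsilon_1}\rho_0^{-1})/\log r+1$, then use the contrapositive of Lemma~\ref{lem:multiplier-update-without-proj} on $K_{\epsilon_1}\le k\le \overline{K}_{\epsilon_1}-1$ to force $\rho_k<\tilde{\rho}_{\epsilon_1}$, and finally apply $\rho_{\overline{K}_{\epsilon_1}}\le r\rho_{\overline{K}_{\epsilon_1}-1}$. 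The paper phrases the second step as a direct implication rather than a contradiction, but the logic is identical.
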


\begin{proof}
It follows from \eqref{bound-barrho2} that $\tilde{\rho}_{\epsilon_1} \ge 2\rho_0$. By this and similar arguments as for \eqref{T-epsilong}, one has $K_{\epsilon_1}\le\log(\tilde{\rho}_{\epsilon_1}\rho_0^{-1})/\log{r}+1$, where $K_{\epsilon_1}$ is defined in \eqref{T-epsilon-g}.
Using this, the update scheme for $\{\rho_k\}$, and similar arguments as for \eqref{rho:upper-bound-Tepsilong}, we obtain
\begin{equation}\label{bound-rhok-Tepsilon}
\max_{0 \le k \le K_{\epsilon_1}} \rho_k \le {r}\tilde{\rho}_{\epsilon_1}.
\end{equation}
If $\|c(x^{K_{\epsilon_1}+1})\|\le\epsilon_1$, it follows from \eqref{number-outer-iteration} that $\overline{K}_{\epsilon_1}=K_{\epsilon_1}$, which together with \eqref{bound-rhok-Tepsilon} implies that $\rho_k\le{r}\tilde{\rho}_{\epsilon_1}$ holds for $0 \le k\le \overline{K}_{\epsilon_1}$. On the other hand,  if $\|c(x^{K_{\epsilon_1}+1})\|>\epsilon_1$, it follows from \eqref{number-outer-iteration} that $\|c(x^{k+1})\|>\epsilon_1$ for $K_{\epsilon_1} \le k \le \overline{K}_{\epsilon_1}-1$. This together with Lemma \ref{lem:multiplier-update-without-proj} and \eqref{bound-barrho2} implies that for all $K_{\epsilon_1} \le k \le \overline{K}_{\epsilon_1}-1$,
\[
\rho_k <\max\{\Lambda^2(2\bdelta_f)^{-1},2(\fh-\fl+\gamma)\bdelta_c^{-2}+2\Lambda\bdelta_c^{-1}
+2\gamma,2(U_g^f+U_g^c\Lambda+1)(\sigma\epsilon_1)^{-1}\}\overset{\eqref{bound-barrho2}}{\le}\tilde{\rho}_{\epsilon_1}.
\]
By this, \eqref{bound-rhok-Tepsilon}, and $\rho_{\overline{K}_{\epsilon_1}}\le{r}\rho_{\overline{K}_{\epsilon_1}-1}$, we also see that $\rho_k\le{r}\tilde{\rho}_{\epsilon_1}$ holds for $0 \le k\le \overline{K}_{\epsilon_1}$.
\end{proof}
	

\begin{proof}[Proof of Theorem \ref{thm:total-iter-cmplxity2}]
Notice from \eqref{bound-barrho2} and Lemma \ref{cor:improved-outer-iteration-cmplxty} that $\tilde{\rho}_{\epsilon_1}=\cO(\epsilon_1^{-1})$ and $\rho_k \le {r}\tilde{\rho}_{\epsilon_1}$, which yield $\rho_k=\cO(\epsilon_1^{-1})$.  The conclusion of Theorem \ref{thm:total-iter-cmplxity2} then follows from this and the same arguments as for the proof of Theorem \ref{thm:total-iter-cmplxity} with $\rho_k=\cO(\epsilon_1^{-2})$ replaced by $\rho_k=\cO(\epsilon_1^{-1})$.
\end{proof}	
	
\section{Future work}\label{sec:cr}

There are several possible future studies on this work. First, it would be interesting to extend our AL method to seek an approximate SOSP of nonconvex optimization with inequality or more general constraints. Indeed, for nonconvex optimization with inequality constraints, one can reformulate it as an equality constrained problem using squared slack variables (e.g., see \cite{B97}). It can be shown that an SOSP of the latter problem induces a weak SOSP of the original problem and also linear independence constraint qualification holds for the latter problem if it holds for the original problem. As a result, it is promising to find an approximate weak SOSP of an inequality constrained problem by applying our AL method to the equivalent equality constrained problem. Second, it is worth studying whether the enhanced complexity results in Section~\ref{sec:AL-modified} can be derived under weaker constraint qualification (e.g., see \cite{AHSS12}). Third, the development of our AL method is based on a strong assumption that a nearly feasible solution of the problem is known. It would make the method applicable to a broader class of problems if such an assumption could be removed by modifying the method possibly through the use of infeasibility detection techniques (e.g., see \cite{BCW14}). Lastly, more numerical studies would be helpful to further improve our AL method from a practical perspective.


\section*{Appendix}
\appendix

\section{A capped conjugate gradient method}\label{appendix:capped-CG}
In this part we present the capped CG method proposed in \cite[Algorithm~1]{RNW18} for finding either an approximate solution to the linear system \eqref{indef-sys} or a sufficiently negative curvature direction of the associated matrix $H$, which has been briefly discussed in Section \ref{sbsc:main-cmpnts}. Its details can be found in \cite[Section~3.1]{RNW18}.
\begin{algorithm}[h]
\caption{A capped conjugate gradient method}
\label{alg:capped-CG}
{\footnotesize
\begin{algorithmic}
\State \noindent\textit{Inputs}: symmetric matrix $H\in\bR^{n\times n}$, vector $g\neq0$, damping parameter $\varepsilon\in(0,1)$, desired relative accuracy $\zeta\in(0,1)$.
\State \textit{Optional input:} scalar $U\ge0$ (set to $0$ if not provided).
\State \textit{Outputs:} d$\_$type, $d$.
\State \textit{Secondary outputs:} final values of $U,\kappa,\widehat{\zeta},\tau,$ and $T$.
\State Set
\begin{equation*}
\textstyle
\bar{H}:=H+2\varepsilon I,\quad \kappa:=\frac{U+2\varepsilon}{\varepsilon},\quad\widehat{\zeta}:=\frac{\zeta}{3\kappa},\quad\tau:=\frac{\sqrt{\kappa}}{\sqrt{\kappa}+1},\quad T:=\frac{4\kappa^4}{(1-\sqrt{\tau})^2},
\end{equation*}
$y^0\leftarrow 0,r^0\leftarrow g,p^0\leftarrow -g, j\leftarrow 0$.
\If {$(p^0)^T \bar{H}p^0<\varepsilon\|p^0\|^2$}
\State Set $d\leftarrow p^0$ and terminate with d$\_$type = NC;
\ElsIf {\ $\|Hp^0\|>U\|p^0\|\ $}
\State Set $U\leftarrow\|Hp^0\|/\|p^0\|$ and update $\kappa,\widehat{\zeta},\tau, T$ accordingly;
\EndIf
\While{TRUE}
\State $\alpha_j\leftarrow (r^j)^T r^j/(p^j)^T\bar{H}p^j$; \{Begin Standard CG Operations\}
\State $y^{j+1}\leftarrow y^j+\alpha_jp^j$;
\State $r^{j+1}\leftarrow r^j+\alpha_j\bar{H}p^j$;
\State $\beta_{j+1}\leftarrow\|r^{j+1}\|^2/\|r^j\|^2$;
\State $p^{j+1}\leftarrow-r^{j+1}+\beta_{j+1}p^j$; \{End Standard CG Operations\}
\State $j\leftarrow j+1$;
\If {$\|Hp^j\|>U\|p^j\|$}
\State Set $U\leftarrow\|Hp^j\|/\|p^j\|$ and update $\kappa,\widehat{\zeta},\tau,T$ accordingly;
\EndIf
\If {\ $\|Hy^j\|>U\|y^j\|\ $}
\State Set $U\leftarrow\|Hy^j\|/\|y^j\|$ and update $\kappa,\widehat{\zeta},\tau,T$ accordingly;
\EndIf
\If {\ $\|Hr^j\|>U\|r^j\|\ $}
\State Set $U\leftarrow\|Hr^j\|/\|r^j\|$ and update $\kappa,\widehat{\zeta},\tau,T$ accordingly;
\EndIf
\If {$(y^j)^T\bar{H}y^j<\varepsilon\|y^j\|^2$}
\State Set $d\leftarrow y^j$ and terminate with d$\_$type = NC;
\ElsIf {\ $\|r^j\|\le\widehat{\zeta}\|r^0\|$}
\State Set $d\leftarrow y^j$ and terminate with d$\_$type = SOL;
\ElsIf{\ $(p^j)^T\bar{H}p^j<\varepsilon\|p^j\|^2$}
\State Set $d\leftarrow p^j$ and terminate with d$\_$type = NC;
\ElsIf {\ $\|r^j\|>\sqrt{T}\tau^{j/2}\|r^0\| $}
\State Compute $\alpha_j, y^{j+1}$ as in the main loop above;
\State Find $i\in\{0,\ldots,j-1\}$ such that
\[
(y^{j+1}-y^i)^T\bar{H}(y^{j+1}-y^i)<\varepsilon\|y^{j+1}-y^i\|^2;
\]
\State Set $d\leftarrow y^{j+1}-y^i$ and terminate with d$\_$type = NC;
\EndIf
\EndWhile
\end{algorithmic}
}
\end{algorithm}

The following theorem presents the iteration complexity of Algorithm~\ref{alg:capped-CG}.

\begin{theorem}[{\bf iteration complexity of Algorithm \ref{alg:capped-CG}}]\label{lem:capped-CG}
Consider applying Algorithm~\ref{alg:capped-CG} with input $U = 0$ to the linear system~\eqref{indef-sys} with $g\neq 0$, $\varepsilon>0$, and $H$ being an $n\times n$ symmetric matrix. Then the number of iterations of Algorithm~\ref{alg:capped-CG} is $\widetilde{\cO}(\min\{n,\sqrt{\|H\|/\varepsilon}\})$.
\end{theorem}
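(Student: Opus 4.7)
The plan is to bound the iteration count by treating Algorithm~\ref{alg:capped-CG} as essentially standard conjugate gradient applied to the SPD perturbation $\bar H := H + 2\varepsilon I$, augmented with safeguards that either confirm the expected geometric residual decay or detect a negative-curvature direction of $\bar H$. As a preliminary step I would bound the dynamically updated scalar $U$: every time it is reset via $U \leftarrow \|Hv\|/\|v\|$ for some iterate $v \in \{p^j, y^j, r^j\}$, the new value is at most $\|H\|$, so $U \le \|H\|$ throughout the run. Consequently
\[
\kappa \;=\; \frac{U+2\varepsilon}{\varepsilon} \;\le\; \frac{\|H\|+2\varepsilon}{\varepsilon} \;=\; \cO\!\bigl(\max\{1,\|H\|/\varepsilon\}\bigr),
\]
so $\tau = \sqrt{\kappa}/(\sqrt{\kappa}+1) = 1 - \Theta(1/\sqrt{\kappa})$, while $T$ and $\widehat\zeta^{-1}$ are only polynomial in $\kappa$, hence logarithmic when folded into the $\widetilde{\cO}$ notation.

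The argument for termination then splits into two cases. First, the CG iterates $y^j$ lie in the Krylov subspace $\mathcal{K}_j(\bar H, g)$, whose dimension cannot exceed $n$; if no NC branch has fired by step $n$, exact arithmetic standard CG on the SPD matrix $\bar H$ produces $r^n = 0$, forcing the SOL exit. This yields the $n$ part of the bound. Second, when no NC direction is encountered up to iteration $j$, standard CG convergence on $\bar H$ (condition number at most $\kappa$) gives $\|r^j\|_{\bar H^{-1}} \le 2((\sqrt\kappa-1)/(\sqrt\kappa+1))^j\|r^0\|_{\bar H^{-1}}$; converting norms using $\varepsilon I \preceq \bar H \preceq (U+2\varepsilon) I$ produces a bound of the form $\|r^j\| \le \sqrt{T}\,\tau^{j/2}\|r^0\|$. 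In particular, after
\[
J \;=\; \cO\!\left(\sqrt{\kappa}\,\log\!\bigl(T/\widehat\zeta^{\,2}\bigr)\right) \;=\; \widetilde{\cO}\!\bigl(\sqrt{\|H\|/\varepsilon}\bigr)
\]
iterations, the SOL condition $\|r^j\| \le \widehat\zeta\|r^0\|$ must trigger. Combining the two cases gives the desired $\widetilde{\cO}(\min\{n,\sqrt{\|H\|/\varepsilon}\})$ bound.

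The main obstacle is justifying what happens when the auxiliary ``large-residual'' check $\|r^j\| > \sqrt{T}\,\tau^{j/2}\|r^0\|$ is violated before the nominal CG rate is achieved: one has to show that this can only occur because $\bar H$ fails to be positive definite on the current Krylov subspace, and that in this situation some pair $(y^{j+1}, y^i)$ with $i<j$ certifies $(y^{j+1}-y^i)^T \bar H (y^{j+1}-y^i) < \varepsilon \|y^{j+1}-y^i\|^2$, so that the NC exit in Algorithm~\ref{alg:capped-CG} is correctly taken. This is the delicate technical piece and proceeds via a Chebyshev-polynomial / minimax argument on the Krylov subspace: the failure of the a priori residual bound is equivalent to $\bar H$ having a near-zero or negative Ritz value in $\mathcal{K}_{j+1}(\bar H,g)$, and from this Ritz structure one extracts the required NC witness among the differences of CG iterates. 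Once this NC-detection correctness is established, every iteration of Algorithm~\ref{alg:capped-CG} falls into one of the two previously analyzed cases, and the iteration count is dominated by $\min\{n, J\} = \widetilde{\cO}(\min\{n, \sqrt{\|H\|/\varepsilon}\})$ as claimed.
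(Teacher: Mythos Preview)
Your argument is broadly sound but is doing considerably more work than the theorem requires, and in one place it leans on a premise that is not available. The paper's proof is a two-line citation of \cite{RNW18}: Lemma~1 there gives the iteration bound $\min\{n,J(U,\varepsilon,\zeta)\}$ where $J$ is the smallest integer with $\sqrt{T}\tau^{J/2}\le\widehat\zeta$, and Section~3.1 there bounds $J\le\lceil(\sqrt\kappa+\tfrac12)\ln(144(\sqrt\kappa+1)^2\kappa^6/\zeta^2)\rceil$. Since the output $U$ is always of the form $\|Hv\|/\|v\|\le\|H\|$, one gets $\kappa=\cO(\|H\|/\varepsilon)$ and hence $J=\widetilde\cO(\sqrt{\|H\|/\varepsilon})$.

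Your reconstruction of this argument introduces an unnecessary detour. You argue that if no NC branch has fired, standard CG convergence on $\bar H$ yields $\|r^j\|\le\sqrt{T}\tau^{j/2}\|r^0\|$, and hence SOL eventually triggers. But $\bar H$ need not be positive definite, so the classical CG rate is not directly available; you then try to recover by analysing the NC-detection branch. For the \emph{iteration count} this entire discussion is irrelevant: by the very definition of $J$ one has $\sqrt{T}\tau^{J/2}\le\widehat\zeta$, so at iteration $J$ either $\|r^J\|\le\widehat\zeta\|r^0\|$ and SOL fires, or $\|r^J\|>\widehat\zeta\|r^0\|\ge\sqrt{T}\tau^{J/2}\|r^0\|$ and the large-residual branch fires. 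Termination by iteration $\min\{n,J\}$ is thus built into the structure of the exit tests and requires no appeal to CG convergence theory. The ``delicate technical piece'' you flag, namely that the large-residual branch can actually locate an index $i$ with $(y^{j+1}-y^i)^T\bar H(y^{j+1}-y^i)<\varepsilon\|y^{j+1}-y^i\|^2$, is a statement about \emph{correctness} of the NC output, not about iteration count, and is not needed for this theorem.
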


\begin{proof}
From \cite[Lemma~1]{RNW18}, we know that the number of iterations of Algorithm \ref{alg:capped-CG} is bounded by $\min\{n,J(U,\varepsilon,\zeta)\}$, where $J(U,\varepsilon,\zeta)$ is the smallest integer $J$ such that $\sqrt{T}\tau^{J/2}\le\widehat{\zeta}$, with $U,\widehat{\zeta},T$ and $\tau$ being the values returned by Algorithm \ref{alg:capped-CG}. In addition, it was shown in \cite[Section 3.1]{RNW18} that
$J(U,\varepsilon,\zeta)\le\left\lceil \left(\sqrt{\kappa}+\frac{1}{2}\right)\ln\left(\frac{144(\sqrt{\kappa}+1)^2\kappa^6}{\zeta^2}\right)\right\rceil$,
where $\kappa={\cO}(U/\varepsilon)$ is an output by Algorithm \ref{alg:capped-CG}. Then one can see that $J(U,\varepsilon,\zeta)=\widetilde{\cO}(\sqrt{U/\varepsilon})$. Notice from
Algorithm~\ref{alg:capped-CG} that the output $U\le\|H\|$. Combining these, we obtain the conclusion as desired.
\end{proof}

\section{A randomized Lanczos based minimum eigenvalue oracle} \label{appendix:meo}
In this part we present the randomized Lanczos method proposed in \cite[Section~3.2]{RNW18}, which can be used as a minimum eigenvalue oracle for Algorithm~\ref{alg:NCG}. As briefly discussed in Section~\ref{sbsc:main-cmpnts}, this oracle outputs either a sufficiently negative curvature direction of $H$ or a certificate that $H$ is nearly positive semidefinite with high probability. More detailed motivation and explanation of it can be found in \cite[Section~3.2]{RNW18}.

\begin{algorithm}[h]
\caption{A randomized Lanczos based minimum eigenvalue oracle}
\label{pro:meo}
\noindent\textit{Input}: symmetric matrix $H\in\bR^{n\times n}$, tolerance $\varepsilon>0$, and probability parameter $\delta\in(0,1)$.\\
\noindent\textit{Output}: a sufficiently negative curvature direction $v$ satisfying $v^THv\le-\varepsilon/2$ and $\|v\|=1$; or a certificate that $\lambda_{\min}(H)\ge-\varepsilon$ with probability at least  $1-\delta$.\\
Apply the Lanczos method \cite{KW92LR} to estimate $\lambda_{\min}(H)$ starting with a random vector uniformly generated on the unit sphere, and run it for at most 
\begin{equation}\label{N-iter} 
N(\varepsilon,\delta):=\min\left\{n,1+\left\lceil\frac{\ln(2.75n/\delta^2)}{2}\sqrt{\frac{\|H\|}{\varepsilon}}\right\rceil\right\}
\end{equation}
iterations. If a unit vector $v$ with $v^THv \le -\varepsilon/2$ is found at some iteration, terminate immediately and return $v$.
\end{algorithm}

The following theorem justifies that Algorithm~\ref{pro:meo} is a suitable minimum eigenvalue oracle for Algorithm \ref{alg:NCG}. Its proof is identical to that of \cite[Lemma~2]{RNW18} and thus omitted.

\begin{theorem}[{\bf iteration complexity of Algorithm~\ref{pro:meo}}]\label{rand-Lanczos}
Consider Algorithm~\ref{pro:meo} with tolerance $\varepsilon>0$, probability parameter $\delta\in(0,1)$, and symmetric matrix $H\in\bR^{n\times n}$ as its input. Then it either finds a sufficiently negative curvature direction $v$ satisfying $v^THv\le-\varepsilon/2$ and $\|v\|=1$ or certifies that $\lambda_{\min}(H)\ge-\varepsilon$ holds with probability at least  $1-\delta$  in at most $N(\varepsilon,\delta)$ iterations, where $N(\varepsilon,\delta)$ is defined in \eqref{N-iter}.
\end{theorem}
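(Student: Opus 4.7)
The plan is to reduce Theorem~\ref{rand-Lanczos} to the classical high-probability tail bound for the randomized Lanczos method of Kuczy{\'n}ski and Wo{\'z}niakowski~\cite{KW92LR}. First I would establish the structural half: at Lanczos iteration $j$, let $Q_j \in \bR^{n\times j}$ collect the orthonormal Lanczos vectors and set $T_j := Q_j^T H Q_j$. Letting $\theta_j := \lambda_{\min}(T_j)$ and $y_j^\star$ be its unit eigenvector, the Ritz vector $v_j := Q_j y_j^\star$ satisfies $\|v_j\|=1$ and $v_j^T H v_j = \theta_j$; Courant--Fischer gives $\theta_j \ge \lambda_{\min}(H)$. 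So whenever $\theta_j \le -\varepsilon/2$ the vector $v_j$ is a valid sufficiently negative curvature direction, which is precisely the termination condition checked by Algorithm~\ref{pro:meo}.

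Next I would invoke the probabilistic half. Applied to $-H$ from a vector $q_1$ drawn uniformly on $S^{n-1}$, the Kuczy{\'n}ski--Wo{\'z}niakowski analysis yields, for any $\eta \in (0,1)$ and $k\ge 1$,
\[
\prob\!\left[\theta_k - \lambda_{\min}(H) > \eta\bigl(\lambda_{\max}(H) - \lambda_{\min}(H)\bigr)\right] \le C\sqrt{n}\,e^{-(2k-1)\sqrt{\eta}}
\]
for an explicit absolute constant $C$. Taking $\eta = \varepsilon/(4\|H\|)$ caps the deviation at $\varepsilon/2$, since $\lambda_{\max}(H) - \lambda_{\min}(H) \le 2\|H\|$, and requiring the right-hand side to be at most $\delta$ yields, after absorbing absolute constants, $k \ge 1 + \lceil \tfrac12\ln(2.75n/\delta^2)\sqrt{\|H\|/\varepsilon}\rceil$. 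This matches $N(\varepsilon,\delta)$ in~\eqref{N-iter}.

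Combining the two halves: run Lanczos for $k = N(\varepsilon,\delta)$ iterations, returning any Ritz vector with $v_j^T H v_j \le -\varepsilon/2$ the moment it appears. If no such vector surfaces, then $\theta_k > -\varepsilon/2$; the tail bound above forces $\lambda_{\min}(H) \ge \theta_k - \varepsilon/2 > -\varepsilon$ with probability at least $1-\delta$, delivering the certificate. The corner case $N(\varepsilon,\delta)=n$ is handled by the standard fact that Lanczos either produces an exact invariant subspace upon breakdown or terminates after $n$ steps with an $H$-invariant basis of $\bR^n$, so exact eigenvalue information is available. The main technical obstacle is importing the precise concentration constants in the Kuczy{\'n}ski--Wo{\'z}niakowski tail bound; these come from expressing the Ritz error as a Chebyshev-polynomial extremal problem on $[\lambda_{\min}(H), \lambda_{\max}(H)]$ and then controlling the tail of $|q_1^T u_1|^2$ under the Haar measure on $S^{n-1}$, where $u_1$ is the unit eigenvector corresponding to $\lambda_{\min}(H)$. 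One may alternatively transcribe the bookkeeping from \cite[Lemma~2]{RNW18} verbatim, which is exactly what the authors of the present paper do.
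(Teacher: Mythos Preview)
Your proposal is correct and aligns with the paper's approach: the paper does not actually give a proof here, stating only that ``Its proof is identical to that of \cite[Lemma~2]{RNW18} and thus omitted,'' and you have accurately identified this and sketched the content of that cited argument (the Kuczy{\'n}ski--Wo{\'z}niakowski tail bound combined with the Ritz-vector interpretation of Lanczos). Your final sentence already notes this equivalence, so there is nothing further to compare.
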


Notice that $\|H\|$ is required in Algorithm~\ref{pro:meo}. In general, computing $\|H\|$  may not be cheap when $n$ is large. Nevertheless, $\|H\|$ can be efficiently estimated via a randomization scheme with high confidence (e.g., see the discussion in \cite[Appendix~B3]{RNW18}).

\begin{thebibliography}{10}
	
	\bibitem{AABHM17}
	{\sc N.~Agarwal, Z.~Allen-Zhu, B.~Bullins, E.~Hazan, and T.~Ma}, {\em Finding
		approximate local minima faster than gradient descent}, in Proceedings of the
	49th Annual ACM SIGACT Symposium on Theory of Computing, 2017,
	pp.~1195--1199.
	
	\bibitem{ABM08}
	{\sc R.~Andreani, E.~G. Birgin, J.~M. Mart{\'\i}nez, and M.~L. Schuverdt}, {\em
		On augmented {L}agrangian methods with general lower-level constraints}, SIAM
	J. Optim., 18 (2008), pp.~1286--1309.
	
	\bibitem{AHM11soc}
	{\sc R.~Andreani, G.~Haeser, and J.~M. Mart{\'\i}nez}, {\em On sequential
		optimality conditions for smooth constrained optimization}, Optim., 60
	(2011), pp.~627--641.
	
	\bibitem{AHRS17second}
	{\sc R.~Andreani, G.~Haeser, A.~Ramos, and P.~J. Silva}, {\em A second-order
		sequential optimality condition associated to the convergence of optimization
		algorithms}, IMA J. Numer. Anal., 37 (2017), pp.~1902--1929.
	
	\bibitem{AHSS12}
	{\sc R.~Andreani, G.~Haeser, M.~L. Schuverdt, and P.~J. Silva}, {\em Two new
		weak constraint qualifications and applications}, SIAM J. Optim., 22 (2012),
	pp.~1109--1135.
	
	\bibitem{AT19}
	{\sc P.~Armand and N.~N. Tran}, {\em An augmented {L}agrangian method for
		equality constrained optimization with rapid infeasibility detection
		capabilities}, J. Optim. Theory Appl., 181 (2019), pp.~197--215.
	
	\bibitem{B97}
	{\sc D.~P. Bertsekas}, {\em Nonlinear Programming}, Athena Scientific, 1999.
	
	\bibitem{BCY15}
	{\sc W.~Bian, X.~Chen, and Y.~Ye}, {\em Complexity analysis of interior point
		algorithms for non-{L}ipschitz and nonconvex minimization}, Math. Program.,
	149 (2015), pp.~301--327.
	
	\bibitem{BGMST16}
	{\sc E.~G. Birgin, J.~Gardenghi, J.~M. Mart{\'\i}nez, S.~A. Santos, and P.~L.
		Toint}, {\em Evaluation complexity for nonlinear constrained optimization
		using unscaled {KKT} conditions and high-order models}, SIAM J. Optim., 26
	(2016), pp.~951--967.
	
	\bibitem{BHR18AL}
	{\sc E.~G. Birgin, G.~Haeser, and A.~Ramos}, {\em Augmented {L}agrangians with
		constrained subproblems and convergence to second-order stationary points},
	Comput. Optim. Appl., 69 (2018), pp.~51--75.
	
	\bibitem{BM14}
	{\sc E.~G. Birgin and J.~M. Mart{\'\i}nez}, {\em Practical Augmented Lagrangian
		Methods for Constrained Optimization}, SIAM, 2014.
	
	\bibitem{BM17}
	{\sc E.~G. Birgin and J.~M. Mart{\'\i}nez}, {\em The use of quadratic
		regularization with a cubic descent condition for unconstrained
		optimization}, SIAM J. Optim., 27 (2017), pp.~1049--1074.
	
	\bibitem{BM20}
	{\sc E.~G. Birgin and J.~M. Mart{\'\i}nez}, {\em Complexity and performance of
		an augmented {L}agrangian algorithm}, Optim. Methods and Softw., 35 (2020),
	pp.~885--920.
	
	\bibitem{BL95}
	{\sc J.~F. Bonnans and G.~Launay}, {\em Sequential quadratic programming with
		penalization of the displacement}, SIAM J. Optim., 5 (1995), pp.~792--812.
	
	\bibitem{BVB16}
	{\sc N.~Boumal, V.~Voroninski, and A.~S. Bandeira}, {\em The non-convex
		{Burer-Monteiro} approach works on smooth semidefinite programs}, in Advances
	in Neural information Processing Systems, vol.~29, 2016, pp.~2757--2765.
	
	\bibitem{bueno2020complexity}
	{\sc L.~F. Bueno and J.~M. Mart{\'\i}nez}, {\em On the complexity of an inexact
		restoration method for constrained optimization}, SIAM J. Optim., 30 (2020),
	pp.~80--101.
	
	\bibitem{BuMo03-1}
	{\sc S.~Burer and R.~D.~C. Monteiro}, {\em A nonlinear programming algorithm
		for solving semidefinite programs via low-rank factorization}, Math.
	Program., 95 (2003), pp.~329--357.
	
	\bibitem{BuMo05-1}
	{\sc S.~Burer and R.~D.~C. Monteiro}, {\em Local minima and convergence in
		low-rank semidefinite programming}, Math. Program., 103 (2005), pp.~427--444.
	
	\bibitem{BCW14}
	{\sc J.~V. Burke, F.~E. Curtis, and H.~Wang}, {\em A sequential quadratic
		optimization algorithm with rapid infeasibility detection}, SIAM J. Optim.,
	24 (2014), pp.~839--872.
	
	\bibitem{BCN10if}
	{\sc R.~H. Byrd, F.~E. Curtis, and J.~Nocedal}, {\em Infeasibility detection
		and {SQP} methods for nonlinear optimization}, SIAM J. Optim., 20 (2010),
	pp.~2281--2299.
	
	\bibitem{BSS87}
	{\sc R.~H. Byrd, R.~B. Schnabel, and G.~A. Shultz}, {\em A trust region
		algorithm for nonlinearly constrained optimization}, SIAM J. Numer. Anal., 24
	(1987), pp.~1152--1170.
	
	\bibitem{CD17}
	{\sc Y.~Carmon and J.~C. Duchi}, {\em Gradient descent finds the
		cubic-regularized nonconvex {Newton} step}, SIAM J. Optim., 29 (2019),
	pp.~2146--2178.
	
	\bibitem{CDHS17}
	{\sc Y.~Carmon, J.~C. Duchi, O.~Hinder, and A.~Sidford}, {\em ``{C}onvex until
		proven guilty": {D}imension-free acceleration of gradient descent on
		non-convex functions}, in International Conference on Machine Learning, PMLR,
	2017, pp.~654--663.
	
	\bibitem{CDHS18}
	{\sc Y.~Carmon, J.~C. Duchi, O.~Hinder, and A.~Sidford}, {\em Accelerated
		methods for nonconvex optimization}, SIAM J. Optim., 28 (2018),
	pp.~1751--1772.
	
	\bibitem{CGT11b}
	{\sc C.~Cartis, N.~I. Gould, and P.~L. Toint}, {\em Adaptive cubic
		regularisation methods for unconstrained optimization. {Part II:} worst-case
		function-and derivative-evaluation complexity}, Math. Program., 130 (2011),
	pp.~295--319.
	
	\bibitem{cartis2013evaluation}
	{\sc C.~Cartis, N.~I. Gould, and P.~L. Toint}, {\em On the evaluation
		complexity of cubic regularization methods for potentially rank-deficient
		nonlinear least-squares problems and its relevance to constrained nonlinear
		optimization}, SIAM J. Optim., 23 (2013), pp.~1553--1574.
	
	\bibitem{cartis2014complexity}
	{\sc C.~Cartis, N.~I. Gould, and P.~L. Toint}, {\em On the complexity of
		finding first-order critical points in constrained nonlinear optimization},
	Math. Program., 144 (2014), pp.~93--106.
	
	\bibitem{cartis2015evaluation}
	{\sc C.~Cartis, N.~I. Gould, and P.~L. Toint}, {\em On the evaluation
		complexity of constrained nonlinear least-squares and general constrained
		nonlinear optimization using second-order methods}, SIAM J. Numer. Anal., 53
	(2015), pp.~836--851.
	
	\bibitem{cartis2019evaluation}
	{\sc C.~Cartis, N.~I. Gould, and P.~L. Toint}, {\em Evaluation complexity
		bounds for smooth constrained nonlinear optimization using scaled {KKT}
		conditions, high-order models and the criticality measure $\chi$}, in
	Approximation and Optimization, Springer, 2019, pp.~5--26.
	
	\bibitem{CGT19eq}
	{\sc C.~Cartis, N.~I. Gould, and P.~L. Toint}, {\em Optimality of orders one to
		three and beyond: characterization and evaluation complexity in constrained
		nonconvex optimization}, J. Complex., 53 (2019), pp.~68--94.
	
	\bibitem{CGLY17}
	{\sc X.~Chen, L.~Guo, Z.~Lu, and J.~J. Ye}, {\em An augmented {L}agrangian
		method for non-{L}ipschitz nonconvex programming}, SIAM J. Numer. Anal., 55
	(2017), pp.~168--193.
	
	\bibitem{CM19poly}
	{\sc D.~Cifuentes and A.~Moitra}, {\em Polynomial time guarantees for the
		{Burer-Monteiro} method}, arXiv preprint arXiv:1912.01745,  (2019).
	
	\bibitem{CLY02}
	{\sc T.~F. Coleman, J.~Liu, and W.~Yuan}, {\em A new trust-region algorithm for
		equality constrained optimization}, Comput. Optim. Appl., 21 (2002),
	pp.~177--199.
	
	\bibitem{CDCS21trust}
	{\sc F.~E. Curtis, D.~P. Robinson, C.~W. Royer, and S.~J. Wright}, {\em
		Trust-region {Newton-CG} with strong second-order complexity guarantees for
		nonconvex optimization}, SIAM J Optim., 31 (2021), pp.~518--544.
	
	\bibitem{CRS17}
	{\sc F.~E. Curtis, D.~P. Robinson, and M.~Samadi}, {\em A trust region
		algorithm with a worst-case iteration complexity of
		\(\mathcal{O}(\epsilon^{-3/2})\) for nonconvex optimization}, Math. Program.,
	162 (2017), pp.~1--32.
	
	\bibitem{curtis2018complexity}
	{\sc F.~E. Curtis, D.~P. Robinson, and M.~Samadi}, {\em Complexity analysis of
		a trust funnel algorithm for equality constrained optimization}, SIAM J.
	Optim., 28 (2018), pp.~1533--1563.
	
	\bibitem{GY19}
	{\sc G.~N. Grapiglia and Y.~Yuan}, {\em On the complexity of an augmented
		{L}agrangian method for nonconvex optimization}, IMA J. Numer. Anal., 41
	(2021), pp.~1508--1530.
	
	\bibitem{HLY18}
	{\sc G.~Haeser, H.~Liu, and Y.~Ye}, {\em Optimality condition and complexity
		analysis for linearly-constrained optimization without differentiability on
		the boundary}, Math. Program.,  (2019), pp.~1--37.
	
	\bibitem{H69}
	{\sc M.~R. Hestenes}, {\em Multiplier and gradient methods}, J. Optim. Theory
	Appl., 4 (1969), pp.~303--320.
	
	\bibitem{HHZ17prox}
	{\sc M.~Hong, D.~Hajinezhad, and M.-M. Zhao}, {\em Prox-{PDA}: The proximal
		primal-dual algorithm for fast distributed nonconvex optimization and
		learning over networks}, in International Conference on Machine Learning,
	PMLR, 2017, pp.~1529--1538.
	
	\bibitem{JGNKJ17}
	{\sc C.~Jin, R.~Ge, P.~Netrapalli, S.~M. Kakade, and M.~I. Jordan}, {\em How to
		escape saddle points efficiently}, in International Conference on Machine
	Learning, PMLR, 2017, pp.~1724--1732.
	
	\bibitem{KS17example}
	{\sc C.~Kanzow and D.~Steck}, {\em An example comparing the standard and
		safeguarded augmented {L}agrangian methods}, Oper. Res. Lett., 45 (2017),
	pp.~598--603.
	
	\bibitem{KMM19}
	{\sc W.~Kong, J.~G. Melo, and R.~D.~C. Monteiro}, {\em Complexity of a
		quadratic penalty accelerated inexact proximal point method for solving
		linearly constrained nonconvex composite programs}, SIAM J. Optim., 29
	(2019), pp.~2566--2593.
	
	\bibitem{KW92LR}
	{\sc J.~Kuczy{\'n}ski and H.~Wo{\'z}niakowski}, {\em Estimating the largest
		eigenvalue by the power and {L}anczos algorithms with a random start}, SIAM
	J. Matrix Anal. Appl., 13 (1992), pp.~1094--1122.
	
	\bibitem{LPLLX21rAL}
	{\sc Z.~Li, P.-Y. Chen, S.~Liu, S.~Lu, and Y.~Xu}, {\em Rate-improved inexact
		augmented lagrangian method for constrained nonconvex optimization}, in
	International Conference on Artificial Intelligence and Statistics, PMLR,
	2021, pp.~2170--2178.
	
	\bibitem{lu2022single}
	{\sc S.~Lu}, {\em A single-loop gradient descent and perturbed ascent algorithm
		for nonconvex functional constrained optimization}, in International
	Conference on Machine Learning, PMLR, 2022, pp.~14315--14357.
	
	\bibitem{LRYHH20}
	{\sc S.~Lu, M.~Razaviyayn, B.~Yang, K.~Huang, and M.~Hong}, {\em Finding
		second-order stationary points efficiently in smooth nonconvex linearly
		constrained optimization problems}, Advances in Neural Information Processing
	Systems, 33 (2020), pp.~2811--2822.
	
	\bibitem{LL18}
	{\sc Z.~Lu and X.~Li}, {\em Sparse recovery via partial regularization: models,
		theory, and algorithms}, Math. Oper. Res., 43 (2018), pp.~1290--1316.
	
	\bibitem{LZ12}
	{\sc Z.~Lu and Y.~Zhang}, {\em An augmented {L}agrangian approach for sparse
		principal component analysis}, Math. Program., 135 (2012), pp.~149--193.
	
	\bibitem{MR17}
	{\sc J.~M. Mart{\'\i}nez and M.~Raydan}, {\em Cubic-regularization counterpart
		of a variable-norm trust-region method for unconstrained minimization}, J.
	Glob. Optim., 68 (2017), pp.~367--385.
	
	\bibitem{MMK20ipAL}
	{\sc J.~G. Melo, R.~D. Monteiro, and W.~Kong}, {\em Iteration-complexity of an
		inner accelerated inexact proximal augmented {Lagrangian} method based on the
		classical {Lagrangian} function and a full {Lagrange} multiplier update},
	arXiv preprint arXiv:2008.00562,  (2020).
	
	\bibitem{NP06}
	{\sc Y.~Nesterov and B.~T. Polyak}, {\em Cubic regularization of {N}ewton
		method and its global performance}, Math. Program., 108 (2006), pp.~177--205.
	
	\bibitem{NW06}
	{\sc J.~Nocedal and S.~J. Wright}, {\em Numerical Optimization}, Springer,
	2nd~ed., 2006.
	
	\bibitem{NW19}
	{\sc M.~O'Neill and S.~J. Wright}, {\em A log-barrier {Newton-CG} method for
		bound constrained optimization with complexity guarantees}, IMA J. Numer.
	Anal., 41 (2021), pp.~84--121.
	
	\bibitem{R93}
	{\sc R.~T. Rockafellar}, {\em Lagrange multipliers and optimality}, SIAM
	review, 35 (1993), pp.~183--238.
	
	\bibitem{RNW18}
	{\sc C.~W. Royer, M.~O’Neill, and S.~J. Wright}, {\em A {N}ewton-{CG}
		algorithm with complexity guarantees for smooth unconstrained optimization},
	Math. Program., 180 (2020), pp.~451--488.
	
	\bibitem{RW18}
	{\sc C.~W. Royer and S.~J. Wright}, {\em Complexity analysis of second-order
		line-search algorithms for smooth nonconvex optimization}, SIAM J. Optim., 28
	(2018), pp.~1448--1477.
	
	\bibitem{S19iAL}
	{\sc M.~F. Sahin, A.~Eftekhari, A.~Alacaoglu, F.~Latorre, and V.~Cevher}, {\em
		An inexact augmented {L}agrangian framework for nonconvex optimization with
		nonlinear constraints}, Advances in Neural Information Processing Systems, 32
	(2019).
	
	\bibitem{XW21}
	{\sc Y.~Xie and S.~J. Wright}, {\em Complexity of projected {N}ewton methods
		for bound-constrained optimization}, arXiv preprint arXiv:2103.15989,
	(2021).
	
	\bibitem{XW19}
	{\sc Y.~Xie and S.~J. Wright}, {\em Complexity of proximal augmented
		{L}agrangian for nonconvex optimization with nonlinear equality constraints},
	J. Sci. Comput., 86 (2021), pp.~1--30.
	
	\bibitem{YST15sdpnal}
	{\sc L.~Yang, D.~Sun, and K.~C. Toh}, {\em {SDPNAL+}: {A} majorized semismooth
		{Newton-CG} augmented {L}agrangian method for semidefinite programming with
		nonnegative constraints}, Math. Program. Comput., 7 (2015), pp.~331--366.
	
	\bibitem{ZST10NCGAL}
	{\sc X.~Zhao, D.~Sun, and K.~C. Toh}, {\em A {Newton-CG} augmented {L}agrangian
		method for semidefinite programming}, SIAM J. Optim., 20 (2010),
	pp.~1737--1765.
	
\end{thebibliography}
\end{document}